\newtheorem{theorem}{Theorem}[section]
\newtheorem{lemma}[theorem]{Lemma}
\newtheorem{definition}[theorem]{Definition}
\newtheorem{corollary}[theorem]{Corollary}
\newtheorem{conjecture}[theorem]{Conjecture}
\newtheorem{example}[theorem]{Example}
\definecolor{pBlue}{RGB}{86,139,190}
\definecolor{pOrange}{RGB}{211,153,80}
\definecolor{boxL}{RGB}{153,211,80}
\definecolor{boxU}{RGB}{80,153,211}
\newcommand{\defin}[1]{%
\relax\ifmmode%
\textcolor{blue}{#1}%
\else\textcolor{blue}{\emph{#1}}%
\fi%
}
\DeclareMathOperator{\Rect}{Rect}
\DeclareMathOperator{\Res}{Res}
\DeclareMathOperator{\arm}{arm}
\DeclareMathOperator{\leg}{leg}
\DeclareMathOperator*{\ord}{ord}
\newcommand{\setZ}{\mathbb{Z}}
\newcommand{\setR}{\mathbb{R}}
\newcommand{\setQ}{\mathbb{Q}}
\newcommand{\powerSum}{p}
\newcommand{\monomial}{m}
\newcommand{\xvec}{\mathbf{x}}
\newcommand{\hvec}{\mathbf{h}}
\newcommand{\jackj}{J}
\newcommand{\jacks}[1]{{J^{\#}_{#1}}} %Shifted Jacks
\newcommand{\jacksp}[1]{{P^{\#}_{#1}}} %Shifted Jacks
\newcommand{\jacktop}{\varpi}
\newcommand{\jackjLR}{g}
\newcommand{\addset}{\mathcal{O}}
\newcommand{\remset}{\mathcal{I}}
\newcommand{\bsigma}{\bar{\sigma}}
\newcommand{\bmu}{\bar{\mu}}
\newcommand{\partition}{\,\vdash\,}
\newcommand{\norm}[1]{\lVert#1\rVert} % Norm now takes an argument, \norm{ ... }
\renewcommand{\*}{\mathbin{\star}}
\newcommand{\uhv}[2]{{\hvec^{\scriptscriptstyle{\mathrm U}}_{#1}(#2)}}
\newcommand{\lhv}[2]{{\hvec^{\scriptscriptstyle{\mathrm L}}_{#1}(#2)}}
\newcommand{\uh}[2]{{h^{\scriptscriptstyle{\mathrm U}}_{#1}(#2)}}
\newcommand{\ulh}[2]{{h^{\scriptscriptstyle{\mathrm U/L}}_{#1}(#2)}}
\newcommand{\lh}[2]{{h^{\scriptscriptstyle{\mathrm L}}_{#1}(#2)}}
\newcommand{\eh}[2]{{h^{\scriptscriptstyle{\mathrm U/L}}_{#1}(#2)}}
\newcommand{\hflip}[1]{{ \check {#1} }}
\newcommand{\vflip}[1]{{ \hat {#1} }}
\newcommand{\bmin}[2]{{#1 \mathbin{\curlywedge} #2}}
\newcommand{\bmax}[2]{{#1 \mathbin{\curlyvee} #2}}
\DeclareMathSymbol{\shortminus}{\mathbin}{AMSa}{"39}
\def\Plus{\normalfont{\texttt{+}}}
\def\Minus{\normalfont{\texttt{-}}}
\newcommand{\bms}[1]{{#1^{{\Minus}}}}
\newcommand{\bps}[1]{{#1^{{\Plus}}}}
\newcommand{\bs}{\bar{s}}
\newcommand{\bt}{\bar{t}}
\newcommand{\bb}{\bar{b}}
\newcommand{\bw}{\bar{w}}
\newcommand{\lbox}{{1}} % \Box
\title[The Stanley conjecture]{New cases of the Strong Stanley conjecture}
\author[P. Alexandersson]{Per Alexandersson}
\address{Department of Mathematics, Stockholm University, SE-106 91 Stockholm, Sweden}
\email{per.w.alexandersson@gmail.com}
\author[R. Mickler]{Ryan Mickler}
\address{Singulariti Research, 55 University Street, Carlton, 3053, Australia}
\email{ry.mickler@gmail.com}
\begin{document}
\maketitle

\begin{abstract}
We make progress towards understanding the structure of Littlewood--Richardson
coefficients $\jackjLR_{\lambda,\mu}^{\nu}$ for products of Jack symmetric functions.
Building on recent results of the second author, we are able to prove new cases of a
conjecture of Stanley in which certain families of these coefficients can
be expressed as a product of upper or lower hook lengths for every box in
each of the partitions. In particular, we prove that conjecture in the case
of a rectangular union, i.e. for $\jackjLR_{\mu,\bsigma}^{\mu \cup m^n}$
where $\bsigma$ is the complementary partition of $\sigma = \mu \cap m^n$
in the rectangular partition $m^n$. We give a formula for these coefficients
through an explicit prescription of such choices of hooks.
Lastly, we conjecture an analogue of this conjecture of Stanley holds
in the case of shifted Jack functions.
\end{abstract}

%\Per{Need to write intro}

\section{Preliminaries}

\subsection{Young diagrams, hooks and the star product}

Given an integer partition $\lambda = (\lambda_1,\dotsc,\lambda_\ell)$, we 
associate to it the \defin{Young diagram} as the set of coordinates $(i,j)$ where $0\leq j < \ell$ and 
$0 \leq i \leq \lambda_{j-1}$ (French notation).
Each such coordinate $(i,j)$ is called a \defin{box}, and is pictorially
illustrated by the square with the four corners $\{(i,j),(i+1,j),(i,j+1),(i+1,j+1)\}$, see \eqref{eq:yDiagram}.
The bottom left hand corner of the diagram has coordinate $(0,0)$.
Given any box $s=(x,y)$, we let $\defin{\bps{s}}$ and $\defin{\bms{s}}$ denote the boxes (coordinate) $(x+1,y+1)$
and $(x-1,y-1)$, respectively.
In the diagram below, we have the marked the box $s=(2,1)$.
\begin{equation}\label{eq:yDiagram}
\ytableausetup{boxsize=0.9em}
 (7,4,2,2,1) \qquad \longleftrightarrow \qquad 
 \begin{ytableau}
\, \\ 
\, & \, \\ 
\, & \, \\ 
\, & \, & s & \, \\ 
\, & \, & \, & \, & \, & \, & \, \\ 
\end{ytableau}
\end{equation}
We let $\lambda$ also denote the set of boxes in the Young diagram, and $\defin{\lambda^\times}$ denotes 
the set of boxes excluding $(0,0)$.
Given a box $s \in \lambda$, let $\defin{\arm(s)}$ be the number of boxes to the right of $s$,
and let $\defin{\leg(s)}$ be the number of boxes above $s$.
For example, in Figure~\ref{fig:armLeg}, we have $\arm(s)=5$, $\leg(s)=3$.

We let $\defin{\addset_\lambda}$ be the set of boxes (called \defin{outer corners})
which can be added to $\lambda$ and still have a Young diagram
(denoted $\defin{\lambda \cup s}$), see Figure~\ref{fig:armLeg}.
Similarly, we let $\defin{\remset_\lambda}$ be the set of boxes (called \defin{inner corners})
which can be removed from $\lambda$ and still have a Young diagram.
For such $s \in \remset_\lambda$, let $\defin{\lambda \setminus s}$
denote the partition (diagram) obtained by removing $s$ from $\lambda$.
We shall also make use of
$\defin{\remset^+_\lambda} \coloneqq \{ \bps{s} : s \in \remset_\lambda\}$.
This set of coordinates has a natural interpretation, see Figure~\ref{fig:armLeg}.

Given two boxes $a=(a_1,a_2)$ and $b=(b_1,b_2)$
we let the \defin{join} \defin{$\bmin{a}{b}$} be defined as the box at
$(\min(a_1,b_1),\min(a_2,b_2))$. Similarly,
we define the \defin{meet} $\defin{\bmax{a}{b}}\coloneqq (\max(a_1,b_1),\max(a_2,b_2))$,
see Figure~\ref{fig:minmax}.

\begin{figure}[!ht]
\begin{equation*}
\centering
\includegraphics[scale=1.0,page=1]{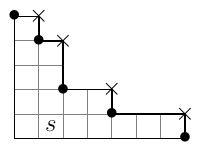}
\hspace{1cm}
\includegraphics[scale=1.0,page=1]{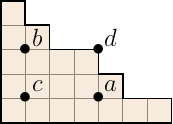}
\end{equation*}
\caption{Left: Consider $\lambda = 74221$ and $s=(1,0)$. For this box,
we have $\arm(s)=5$, $\leg(s)=3$.
All coordinates belonging to $\addset_\lambda$
have been marked with $\bullet$
and all elements in $\remset^+_\lambda$
are marked $\times$.
Right: $c= \bmin{a}{b}$ and $d = \bmax{a}{b}$.
}\label{fig:armLeg}\label{fig:minmax}
\end{figure}

We let $\defin{m^n}$ denote the rectangular partition $(m,m,\dotsc,m)$ with $n$ entries equal to $n$.
For fixed $m$, $n$ and $\mu \subseteq m^n$, we define the \defin{complement partition of $\mu$} as
\[
\defin{\bar{\mu}} \coloneqq (m-\mu_n,m-\mu_{n-1},\dotsc,m-\mu_2,m-\mu_1).
\]
%\Ryan{add definition of $v=(n,m)$, and $\bs = \bms{v}- s$}
%\Per{This we can add later once it is used.}

For example, with $m=7$, $n=6$ and $\mu = 75421$ we have $\bar{\mu} = 76532$, as 
seen below:
\begin{equation*}\label{eq:complement}
\centering
\includegraphics[scale=1.0,page=1]{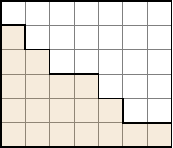}
\end{equation*}
Observe that $\addset_\mu$ mostly corresponds to $\remset^+_{\bar{\mu}}$, 
except for some boundary cases.

\medskip 

We let $\alpha$ be a fixed indeterminate and introduce the following notation.
For $s = (x,y) \in \setZ^2$, we set $\defin{[(x,y)]} \coloneqq  (x,y)\cdot(\alpha, -1) = \alpha x-y$.
We often simplify the notation and write $[x,y]$ instead of $[(x,y)]$.
Given $s \in \lambda$, we define the \defin{upper hook vector} and \defin{lower hook vector} as 
\begin{align*}
\defin{\uhv{\lambda}{s}} & \coloneqq (\arm(s)+1,-\leg(s)) \\
\defin{\lhv{\lambda}{s}} & \coloneqq (\arm(s), -\leg(s)-1),
\end{align*}
where we note that 
\begin{equation}\label{eq:lrhookshift}
\lhv{\lambda}{s} = \uhv{\lambda}{s} - (1,1),
\end{equation} and then the \defin{upper hook value} and \defin{lower hook value} as
\begin{align*}
\defin{\uh{\lambda}{s}} & \coloneqq [\uhv{\lambda}{s}] = \alpha (\arm(s)+1) + \leg(s), 
&& \text{(also known as $h^*_\lambda(s)$)}\\
\defin{\lh{\lambda}{s}} & \coloneqq [\lhv{\lambda}{s}] = \alpha \arm(s) + (\leg(s)+1)
&& \text{(also known as $h_*^\lambda(s)$).}
\end{align*}
We note that the difference is whether the contribution of the box at $s$ should count horizontally,
or vertically---see Figure~\ref{fig:upperLowerHooks} for an illustration.
\begin{figure}[!htb]
\centering
\includegraphics[scale=1.0,page=1]{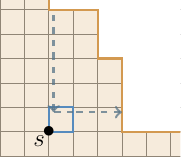}
\qquad 
\includegraphics[scale=1.0,page=2]{upperLowerHooks}
\caption{In the left picture, we have that $\uh{\lambda}{s} = 3\alpha +4$, while
$\lh{\lambda}{s} = 2\alpha + 5$.
}\label{fig:upperLowerHooks}
\end{figure}
%\Ryan{reverse direction of hooks in this image}
%\Per{Should be ok now}

\begin{definition}[Row and column sets of boxes]
\begin{equation*}\label{eq:rowcolSets}
\ytableausetup{boxsize=0.9em,aligntableaux=center}
\substack{
\begin{ytableau}
\, \\ 
\, & \, \\ 
*(pBlue) \, & *(pBlue)  \, & *(pBlue)  \,  & *(pBlue)  s \\ 
\, & \, & \, & \,  & \, \\ 
\, & \, & \, & \, & \, & \, & \, \\ 
\end{ytableau}
\\
R^s_\lambda
}
\qquad 
\substack{
\begin{ytableau}
\, \\ 
\, & \, \\ 
\, & \, & \, & *(pBlue) s \\ 
\, & \, & \, & *(pBlue) \,  & \, \\ 
\, & \, & \, & *(pBlue) \, & \, & \, & \, \\ 
\end{ytableau}
\\
C^s_\lambda
}
\end{equation*}
Given a box $s$, we let $\defin{R^s_\lambda}$ denote the set of boxes in $\lambda$
in the same row as $s$. Similarly, $\defin{C^s_\lambda}$ is the set of boxes in the same column.
Note that this definition makes sense even when $s \notin \lambda$.
\end{definition}

Let $\lambda$ and $\mu$ be Young diagrams. 
Since these are sets of boxes, we may define their intersection, $\defin{\lambda \cap \mu}$ and union $\defin{\lambda \cup \mu}$, 
and these are the Young diagrams for some integer partitions.

\subsection{Jack symmetric functions and structure constants}

There are two popular versions of the Jack symmetric functions; the Jack $P$ functions,
and the \emph{integral form} Jack $J$ functions---the latter has integer coefficients
in the monomial basis. We shall only use the latter functions.
The book \cite{Macdonald:1995} is recommended as a reference on this topic.
For example, the $\jackj_\lambda(\xvec;\alpha)$ for $\lambda \partition 3$,
expanded in the monomial basis are as follows:
\begin{align*}
 \jackj_{3}(\xvec;\alpha) &= (1+3\alpha + 2\alpha^2) \monomial_{3} + (3+3 \alpha) \monomial_{21} + 6\monomial_{111} \\
 \jackj_{21}(\xvec;\alpha) &= (2+ \alpha) \monomial_{21} + 6\monomial_{111} \\
 \jackj_{111}(\xvec;\alpha) &= 6\monomial_{111}.
\end{align*}
The expansions in the power-sum basis are
\begin{align*}
 \jackj_{3}(\xvec;\alpha) &= 2\alpha^2 \powerSum_{3} + 3\alpha \powerSum_{21} + \powerSum_{111} \\
 \jackj_{21}(\xvec;\alpha) &= -\alpha \powerSum_{3} + (\alpha-1) \powerSum_{21} + \powerSum_{111} \\
 \jackj_{111}(\xvec;\alpha) &= 2\powerSum_{3} -3\powerSum_{21} +\powerSum_{111}.
\end{align*}

Let $\defin{\jackjLR_{\mu\nu}^{\gamma}(\alpha)} \in \setQ(\alpha)$ be the Jack Littlewood--Richardson constants, 
determined via the relation
\begin{equation}\label{eq:jackLR}
\jackj_\lambda \jackj_\mu = \sum_\nu \jackjLR_{\lambda\mu}^{\nu}(\alpha) \jackj_\nu.  
\end{equation}
We shall often omit the dependence on $\alpha$, and just write $\jackjLR_{\lambda\mu}^{\nu}$.
Observe that these are closely related to the classical Littlewood--Richardson 
coefficients for Schur functions, denoted $\defin{c_{\lambda\mu}^{\nu}}$.
The Jack polynomials form an orthogonal basis with respect to 
the $\alpha$-deformed Hall inner product, $\langle \,\cdot \, , \,\cdot\, \rangle$.
This is the inner product on symmetric functions with the property that
\[
  \langle \powerSum_\lambda, \powerSum_\mu \rangle =
  \begin{cases}
   z_\lambda \alpha^{\ell(\lambda)} & \text{if $\lambda=\mu$}\\
   0 &\text{otherwise,}
  \end{cases}
\]
and where $n!/\defin{z_\mu}$ is the number of permutations of size $n$ with cycle type $\mu$.

\begin{theorem}[{Jack norm formula, see \cite[5.8]{Stanley:1989}}]
\begin{equation}\label{eq:jacknorm}
\norm{\jackj_\lambda}^2 = \langle \jackj_\lambda, \jackj_\lambda \rangle =\
\prod_{b \in \lambda} \uh{\lambda}{b} \lh{\lambda}{b}.
\end{equation}
\end{theorem}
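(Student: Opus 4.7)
My plan is to prove the norm formula by induction on $n = |\lambda|$, using the Pieri rule for $\powerSum_1 \cdot \jackj_\mu$ combined with the Hall inner product adjointness of multiplication by $\powerSum_1$ and $\alpha \cdot \partial/\partial \powerSum_1$. The base case $\lambda = \emptyset$ is immediate: $\jackj_\emptyset = 1$, $\langle 1,1\rangle = 1$, and the empty product of hooks equals $1$. Before the inductive step, I would record two standing facts. First, $\langle \jackj_\lambda, \jackj_\mu\rangle = 0$ whenever $\lambda \neq \mu$, which follows from the triangular expansion of $\jackj_\lambda$ in the monomial basis together with the characterization of $\jackj_\lambda$ as a joint eigenvector of a commuting family of self-adjoint operators. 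Second, the relation $\langle \powerSum_1 f, g\rangle = \alpha\,\langle f, \partial_{\powerSum_1} g\rangle$, which is immediate from the orthogonality formula $\langle \powerSum_\lambda, \powerSum_\mu\rangle = z_\lambda \alpha^{\ell(\lambda)} \delta_{\lambda,\mu}$.

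The main technical input is the Pieri rule: for every outer corner $s \in \addset_\mu$ there is an explicit coefficient $\Psi_{\mu,s} \in \setQ(\alpha)$, given as a ratio of products of upper and lower hooks over $R^s_\lambda$ and $C^s_\lambda$, such that
\[
\powerSum_1 \cdot \jackj_\mu = \sum_{s \in \addset_\mu} \Psi_{\mu,s}\,\jackj_{\mu \cup s}.
\]
Dually, one has an expansion $\partial_{\powerSum_1} \jackj_\lambda = \sum_{s \in \remset_\lambda} \Phi_{\lambda,s}\,\jackj_{\lambda \setminus s}$ with $\Phi_{\lambda,s}$ again of hook-ratio type. Evaluating $\langle \powerSum_1 \jackj_\mu, \jackj_{\mu \cup s}\rangle$ in two ways and using orthogonality yields the recursion
\[
\Psi_{\mu,s}\,\norm{\jackj_{\mu \cup s}}^2 = \alpha\,\Phi_{\mu \cup s,s}\,\norm{\jackj_\mu}^2,
\]
which determines $\norm{\jackj_\lambda}^2$ in terms of $\norm{\jackj_\mu}^2$ whenever $\lambda/\mu$ is a single box.

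The delicate point, and the main obstacle, is showing that this recursion is consistent with the product $\prod_{b \in \lambda}\uh{\lambda}{b}\lh{\lambda}{b}$. When one box $s$ is appended to $\mu$, the only hook values that change are those of boxes in $R^s_\lambda \cup C^s_\lambda$, and each such change shifts an arm or a leg by one. The ratio of hook products therefore telescopes into a compact expression in $\alpha$, and one must verify that this expression coincides with $\alpha\,\Phi_{\mu \cup s,s}/\Psi_{\mu,s}$. Producing the explicit closed forms of $\Psi_{\mu,s}$ and $\Phi_{\lambda,s}$ from Macdonald's integral-form Pieri rule and then matching them against the telescoping is the combinatorial core of the proof; I would follow the manipulation of hook ratios as carried out by Stanley in \cite{Stanley:1989}. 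A consistency check that the formula is independent of the chosen inner corner $s$ (or alternatively, that it produces the same value for two different paths $\emptyset \to \lambda$) confirms the induction and completes the proof.
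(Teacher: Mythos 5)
First, a point of reference: the paper does not prove this theorem at all --- it is quoted verbatim from Stanley \cite[Thm.~5.8]{Stanley:1989} and used as a black box, so there is no in-paper argument to compare against. Your proposal must therefore stand on its own as a proof, and as written it does not: it is a strategy outline whose decisive step is missing. The entire content of the theorem is the identity
\begin{equation*}
\frac{\prod_{b \in \mu\cup s} \uh{\mu\cup s}{b}\, \lh{\mu\cup s}{b}}{\prod_{b \in \mu} \uh{\mu}{b}\, \lh{\mu}{b}}
\;=\;
\frac{\alpha\,\Phi_{\mu\cup s,\,s}}{\Psi_{\mu,s}},
\qquad s \in \addset_\mu,
\end{equation*}
and you explicitly defer both the closed forms of $\Psi_{\mu,s}$ and $\Phi_{\lambda,s}$ and the verification of this identity to ``the manipulation of hook ratios as carried out by Stanley'' --- that is, to the very source whose theorem you are trying to prove. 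Everything you do establish (the base case, orthogonality of the $\jackj_\lambda$, the adjointness $\langle \powerSum_1 f, g\rangle = \alpha\langle f, \partial_{\powerSum_1} g\rangle$, the abstract shape of the recursion) is standard scaffolding; none of it touches why the answer is a product of one upper and one lower hook per box. You correctly identify that when $s$ is appended only the hooks of boxes in $R^s_{\mu\cup s} \cup C^s_{\mu\cup s}$ change and that the ratio telescopes to contributions from corners, but asserting that this telescoped expression equals $\alpha\,\Phi_{\mu\cup s,s}/\Psi_{\mu,s}$ \emph{is} the theorem, not a routine check to be waved through.

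There is also a logical ordering problem: the single-box Pieri coefficients $\Psi_{\mu,s}$ in the integral $J$-normalization are themselves usually derived together with (or after) the norm formula, since the Pieri rule as stated in this paper (Stanley's Thm.~6.1) gives $\langle \jackj_\mu \jackj_{(1)}, \jackj_\lambda\rangle$, and extracting $\Psi_{\mu,s}$ from it requires dividing by $\norm{\jackj_\lambda}^2$ --- exactly the quantity you are inducting on. To make the argument non-circular you would need an independent derivation of $\Psi_{\mu,s}$ (e.g.\ from the eigenoperator/rodrigues-type construction or from the evaluation $[\monomial_{1^n}]\jackj_\lambda = n!$ plus a duality argument, which is closer to Stanley's actual route). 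Finally, your closing ``consistency check that the formula is independent of the chosen corner'' is unnecessary: if the single-box identity above is proved for every $s$, path-independence is automatic because the right-hand side of \eqref{eq:jacknorm} is a well-defined function of $\lambda$. So the verdict is: correct skeleton, but the combinatorial core --- the only nontrivial part --- is absent.
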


\subsection{The Stanley Conjectures}\label{sect:stanley}

In \cite{Stanley:1989}, several remarkable conjectures were stated regarding the 
structure of the Jack Littlewood--Richardson coefficients. 
These conjectures are stated in terms of the quantities 
$\langle \jackj_\mu \jackj_\nu , \jackj_\lambda \rangle = \jackjLR_{\mu\nu}^\lambda \norm{\jackj_\lambda}^2$, herein referred to as \emph{Stanley structure coefficients}, 
but can be easily restated in terms of the Littlewood--Richardson coefficients using \eqref{eq:jacknorm}.

\begin{conjecture}[{Stanley conjecture, see \cite[8.3]{Stanley:1989}}] The Stanley structure coefficients are non-negative integer polynomials in $\alpha$, that is,
\begin{equation*}
\langle \jackj_\mu \jackj_\nu , \jackj_\lambda \rangle \in \setZ_{\geq0}[\alpha].
\end{equation*}
\end{conjecture}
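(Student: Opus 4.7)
The plan is to split the conjecture into its two claims---integrality of $\langle \jackj_\mu \jackj_\nu, \jackj_\lambda\rangle$ as an element of $\setQ(\alpha)$, and non-negativity of the coefficients of that polynomial in $\alpha$---and address them separately, since they require very different techniques.

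For the integrality part, I would work directly from the definition of the $J$-normalization, which is fixed so that the monomial expansion $\jackj_\lambda = \sum_\mu v_{\lambda\mu}(\alpha)\, \monomial_\mu$ has coefficients $v_{\lambda\mu}(\alpha) \in \setZ[\alpha]$. Combining with the pairing identity $\langle \jackj_\mu \jackj_\nu, \jackj_\lambda\rangle = \jackjLR_{\mu\nu}^\lambda \, \norm{\jackj_\lambda}^2$ and the norm formula \eqref{eq:jacknorm}, the problem reduces to controlling the denominators in $\jackjLR_{\mu\nu}^\lambda$: these denominators must be swallowed by the hook product $\prod_{b\in\lambda}\uh{\lambda}{b}\lh{\lambda}{b}$. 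The transition matrix between the monomial and power-sum bases, together with Stanley's explicit formulas for the denominators appearing in $\jackjLR_{\mu\nu}^\lambda$, is enough to verify this divisibility.

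The genuinely difficult part is non-negativity, which is the famous open half of Stanley's conjecture. My strategy here would be to seek a combinatorial interpretation: construct a set $\mathcal{T}_{\mu\nu}^\lambda$ of combinatorial objects (some refinement of the classical Littlewood--Richardson tableaux of shape $\lambda/\mu$ with content $\nu$, decorated by box-statistics) and a weight function $w\colon \mathcal{T}_{\mu\nu}^\lambda \to \setZ_{\geq 0}[\alpha]$ such that
\[
\langle \jackj_\mu \jackj_\nu, \jackj_\lambda\rangle = \sum_{T \in \mathcal{T}_{\mu\nu}^\lambda} w(T).
\]
The specialization at $\alpha = 1$ should recover the Schur case $c_{\mu\nu}^\lambda \cdot n!/\prod h(b)$, while $\alpha = 2$ should recover the zonal structure constants, providing a sanity check. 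A natural starting point is the Knop--Sahi combinatorial formula for $\jackj_\lambda$, which expresses it as a sum over admissible non-attacking fillings with products of arm/leg hook factors, and to try to extract the coefficient of $\jackj_\lambda$ from the analogous expansion of $\jackj_\mu \jackj_\nu$.

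The main obstacle is \emph{cancellation}: even the Pieri-style product $\jackj_\mu \cdot \powerSum_k$ requires considerable care to prove positivity, and no cancellation-free expansion of $\jackj_\mu \cdot \jackj_\nu$ is presently known in general. The results of this paper suggest a more modest but realistic route: identify families of triples $(\mu, \nu, \lambda)$ for which the sum over $\mathcal{T}_{\mu\nu}^\lambda$ collapses to a single term expressible as a product of upper and lower hooks, and build the general case inductively from such building blocks. The rectangular-union case $\jackjLR_{\mu,\bsigma}^{\mu \cup m^n}$ treated in this paper fits precisely this pattern, and extending the explicit hook prescription beyond the rectangular setting is where I would concentrate the effort.
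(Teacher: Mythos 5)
This statement is not a theorem of the paper: it is Stanley's 1989 conjecture, which remains open, and the paper only establishes special cases of its \emph{strong} form (the rectangular and rectangular-union cases, via Theorem~\ref{thm:rectUnion}). Your text is accordingly a research programme rather than a proof, and neither of its two halves is actually carried out. For integrality, you reduce the question to showing that the denominators of $\jackjLR_{\mu\nu}^{\lambda}$ are absorbed by the hook product $\norm{\jackj_\lambda}^2 = \prod_{b\in\lambda}\uh{\lambda}{b}\lh{\lambda}{b}$, and then assert that Stanley's ``explicit formulas for the denominators'' suffice to verify this divisibility. No such general formula exists; this divisibility is precisely the hard content of the integrality claim, and the Knop--Sahi polynomiality of the monomial coefficients of $\jackj_\lambda$ does not by itself control the denominators introduced when one inverts the Gram matrix to extract $\jackjLR_{\mu\nu}^{\lambda}$ from the product expansion.

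For non-negativity you propose to construct a weighted set $\mathcal{T}_{\mu\nu}^{\lambda}$ whose generating function is $\langle \jackj_\mu\jackj_\nu,\jackj_\lambda\rangle$, but the set is never defined, and, as you yourself note, no cancellation-free expansion of $\jackj_\mu\jackj_\nu$ is currently known. So the gap is not a single faulty step but the absence of any step: the proof has been outlined, not attempted. The correct reading of the paper is that this conjecture is its motivating open problem; the actual contribution is an explicit hook prescription in restricted families. If you want to pursue the direction you sketch, the concrete entry point offered by this paper is not a tableau model but the sum-product identity \eqref{eq:sumproductident}, whose residues at points $u=[s]$ isolate individual normalized coefficients $\hat\jackjLR_{\mu\nu}^{\gamma}$; that residue mechanism is what the authors use to evaluate the coefficients in the cases they can handle.
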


In general, we use the terminology of a \emph{strong} form of the Stanley conjecture to refer to any conjecture that proposes an explicit form for $\langle \jackj_\mu \jackj_\nu , \jackj_\lambda \rangle$ which is \emph{manifestly} a non-negative polynomial in $\alpha$. Stanley conjectured such a form in the case $c_{\mu\nu}^\lambda =1$.

\begin{conjecture}[{Strong Stanley Conjecture, see \cite[8.5]{Stanley:1989}}]\label{conj:strongStanley}
If $c_{\mu\nu}^\lambda =1$, then the corresponding Stanley structure coefficient has the form
\begin{equation*}
\langle \jackj_\mu \jackj_\nu , \jackj_\lambda \rangle = 
\left( \prod_{b \in \mu} \tilde{h}_{\mu}(b) \right)
\left( \prod_{b \in \nu} \tilde{h}_{\nu}(b) \right)
\left( \prod_{b \in \lambda} \tilde{h}_{\lambda}(b) \right),
\end{equation*}
where $\tilde{h}_{\sigma}(b)$ is a choice of either $\uh{\sigma}{b}$ or $\lh{\sigma}{b}$ for each box $b$.
Moreover, one chooses $\lh{\,}{b}$ and $\uh{\,}{b}$ exactly $|\lambda|$ times each in the above expression.
\end{conjecture}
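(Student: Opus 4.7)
The plan is to approach this conjecture by induction on $|\lambda|$ combined with an explicit Pieri-type recursion for Jack polynomials. Before anything, I would verify the degree bookkeeping: since $c_{\mu\nu}^\lambda \neq 0$ forces $|\mu| + |\nu| = |\lambda|$, the right-hand side contains exactly $|\mu| + |\nu| + |\lambda| = 2|\lambda|$ hook factors, so the requirement that $|\lambda|$ of them be upper and $|\lambda|$ be lower is at least combinatorially consistent.

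For the base case I would use the Jack Pieri rule: when $\nu = (k)$ and $\lambda/\mu$ is a horizontal strip, the Schur coefficient $c^\lambda_{\mu\nu}$ equals $1$ and the Jack Pieri coefficients are already known to be ratios of explicit upper and lower hook values. Combined with the norm formula \eqref{eq:jacknorm}, this rewrites the Stanley structure coefficient in exactly the form required by the conjecture, yielding a canonical assignment of $\uh{\lambda}{b}$ versus $\lh{\lambda}{b}$ at every box.

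The inductive step would remove an inner corner from one of $\mu$ or $\nu$ and reduce to a smaller Stanley structure coefficient via a skew Pieri identity. The main obstacle, and what I expect to be the genuinely hard part, is that such a recursion must track not only the overall ratio of hooks---which the Jack function recursions supply naturally---but also \emph{which} of the two choices is made at each box. The combinatorial bookkeeping of hook selection has to remain compatible with the recursion, since passing through intermediate partitions can cause a box to switch its assignment, and maintaining a coherent global rule through the induction is subtle.

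For the rectangular union case $\jackjLR_{\mu,\bsigma}^{\mu \cup m^n}$ singled out in the abstract, I would exploit the involution $\mu \leftrightarrow \bmu$ inside the rectangle, together with the identity $\arm(s)+\leg(s) = m+n-i-j-2$ for hooks inside $m^n$. This should induce an explicit bijection between boxes of $\mu$ and boxes of $\bsigma$ under which upper hooks on one side correspond to lower hooks on the other up to relabeling, giving a canonical prescription of hook choices. The bulk of the remaining work would be setting up this pairing precisely and verifying, using the second author's previous results, that the product telescopes to match the right-hand side of the conjecture while respecting the global balance of $|\lambda|$ upper and $|\lambda|$ lower hook factors.
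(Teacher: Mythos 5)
This statement is an open conjecture, not a theorem proved in the paper: the paper establishes only two special cases of it (the rectangular case, reproving Cai--Jing, and the new rectangular union case). Your proposal reads as though you intend to prove the full conjecture, and neither of the two routes you sketch actually gets there.

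Your main inductive scheme via a ``skew Pieri identity'' has a genuine gap that you yourself identify but do not close. The Jack Pieri rule handles $\nu = (r)$, but to run an induction removing inner corners from a general $\nu$ you would need a Pieri-type recursion that (a) stays within the class $c^\lambda_{\mu\nu}=1$, (b) produces exactly one nonzero intermediate term at each step (otherwise you get a sum, not a product), and (c) tracks how the upper/lower assignment at each box changes as you pass to the intermediate partition. No such recursion is known, and the hook-reassignment problem in (c) is precisely why the conjecture has resisted attack for decades. Acknowledging the obstacle as ``subtle'' is not the same as resolving it; as written the induction has no content beyond the known Pieri base case.

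For the rectangular union case the paper's actual mechanism is quite different from what you sketch. It does not proceed by pairing boxes of $\mu$ with boxes of $\bsigma$ via the complementation involution. Instead it extracts $\jackjLR_{\mu,\bsigma}^{\mu\cup m^n}$ as a residue at $u=[\bms{v}]$ of the rational function $T_{\mu\*\bsigma}(u)$ using the sum-product identity \eqref{eq:sumproductident} from the second author's earlier work, then decomposes $T_{\mu\*\bmu}(u)$ into three quadrant factors (Lemma~\ref{lemma:expansion} and the flip rules of Lemma~\ref{lemma:flip}), and finally peels off boxes of $\mu\setminus\sigma$ one at a time via the multiplicative factors $T_{\bsigma}([\bb])$, tracking the hook assignment explicitly through a chain of figures. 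Your suggestion to use the identity $\arm+\leg = m+n-i-j-2$ and a box bijection is not wrong in spirit --- the $\mu\leftrightarrow\bmu$ symmetry does underlie the final assignment --- but without the $T$-function machinery there is no mechanism to turn that symmetry into an actual formula for $\jackjLR_{\mu,\bsigma}^{\mu\cup m^n}$, and in particular no way to handle the boxes of $\mu$ that lie outside $m^n$, which is exactly what makes the rectangular union case strictly harder than the rectangular one. You would need to reconstruct something like Lemma~3.4 (the $T_{\bsigma}([\bs])$ factorization) to make your pairing idea precise, and that is where the real work lies.
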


Stanley proved Conjecture~\ref{conj:strongStanley} in an important special case.
\begin{theorem}[{Jack Pieri rule, see \cite[6.1]{Stanley:1989}}]
Let $\lambda/\mu$ be a horizontal $r$-strip, then the Stanley 
structure coefficient is given by the following expression
\begin{equation*}
\langle \jackj_\mu  \jackj_{(r)} , \jackj_\lambda \rangle = 
\left( \prod_{b \in \mu} A_{\mu\lambda}(b) \right)
\left( \prod_{b \in (r)}  \uh{(r)}{b}  \right)
\left( \prod_{b \in \lambda} A'_{\mu\lambda}(b) \right),
\end{equation*}
where
\begin{equation*}
A_{\mu\lambda}(b) \coloneqq \begin{cases}
\lh{\mu}{b}  & \text{ if $\lambda/\mu$ does not contain a box in the same column as $b$} \\
\uh{\mu}{b}  &  \text{ otherwise,}
\end{cases}
\end{equation*}
and $A'$ is given by the same formula as $A$, except with upper and lower looks exchanged.
%\begin{equation*}
%B_{\mu\lambda}(b) \coloneqq \begin{cases}
%\uh{\lambda}{b} & \text{ if $\lambda/\mu$ does not contain a square in the same column as $b$} \\
%\lh{\lambda}{b} &  \text{ otherwise.}
%\end{cases}
%\end{equation*}
\end{theorem}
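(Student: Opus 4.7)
The plan is to reduce to the classical Pieri rule for Jack polynomials in the $P$-normalisation and then verify that the resulting Pieri coefficient $\psi'_{\lambda/\mu}$, combined with the norm-formula bulk products, reorganises into Stanley's $A_{\mu\lambda}/A'_{\mu\lambda}$ prescription. Concretely, I would write $\jackj_\lambda = \bigl(\prod_{b \in \lambda} \lh{\lambda}{b}\bigr)\,P_\lambda = \bigl(\prod_{b \in \lambda} \uh{\lambda}{b}\bigr)\,Q_\lambda$, and recall that $P$ and $Q$ are dual under the $\alpha$-Hall inner product. The classical horizontal Jack Pieri rule (Macdonald VI~§6) gives $P_\mu \cdot Q_{(r)} = \sum_\lambda \psi'_{\lambda/\mu}\, P_\lambda$, summed over horizontal $r$-strips $\lambda/\mu$, with $\psi'_{\lambda/\mu}$ an explicit product of hook ratios indexed by boxes in the columns touched by the strip.

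Substituting into the inner product and using \eqref{eq:jacknorm}, a short calculation yields
\begin{equation*}
\langle \jackj_\mu\, \jackj_{(r)},\, \jackj_\lambda \rangle \;=\; \psi'_{\lambda/\mu}\, \Bigl(\prod_{b \in \mu} \lh{\mu}{b}\Bigr)\Bigl(\prod_{b \in (r)} \uh{(r)}{b}\Bigr)\Bigl(\prod_{b \in \lambda} \uh{\lambda}{b}\Bigr),
\end{equation*}
which matches the target after we swap lower-for-upper on those boxes of $\mu$, and upper-for-lower on those boxes of $\lambda$, that lie in columns meeting $\lambda/\mu$. Since the ratio $A_{\mu\lambda}(b)/\lh{\mu}{b}$ equals $\uh{\mu}{b}/\lh{\mu}{b}$ on precisely those columns (and $1$ elsewhere), and analogously for $A'_{\mu\lambda}(b)/\uh{\lambda}{b}$, the theorem reduces to the identity
\begin{equation*}
\psi'_{\lambda/\mu} \;=\; \prod_{\substack{b \in \mu \\ C^b_\mu \,\cap\, (\lambda/\mu) \ne \varnothing}} \frac{\uh{\mu}{b}}{\lh{\mu}{b}} \;\cdot\; \prod_{\substack{b \in \lambda \\ C^b_\lambda \,\cap\, (\lambda/\mu) \ne \varnothing}} \frac{\lh{\lambda}{b}}{\uh{\lambda}{b}}.
\end{equation*}

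The final step is to verify this identity against Macdonald's explicit product for $\psi'_{\lambda/\mu}$. I would proceed column-by-column: in a column where $\lambda/\mu$ adds a box at the top, the arm of each interior box of $\mu$ agrees with its arm in $\lambda$ while the leg shifts by exactly $1$; using \eqref{eq:lrhookshift}, successive ratios telescope and only contributions at the newly added box and at the neighbouring columns with shortened arms survive. These surviving factors match the ones in Macdonald's column product, and multiplying over the active columns of the strip closes the calculation.

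The main obstacle is exactly this telescoping bookkeeping: systematically tracking which boxes have their arm or leg change when a single box is added to a column, while keeping the $(1,1)$-shift between upper and lower hook vectors from \eqref{eq:lrhookshift} straight. I would first pin down all the sign and shift conventions on a small example (say $\mu = (2,1)$ with a single added box in column $2$) before writing the general column-wise argument, so that the boundary cases at the inner corners and at the box immediately below the added strip box are handled consistently.
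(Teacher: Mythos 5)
The paper offers no proof of this theorem --- it is imported verbatim from Stanley \cite[6.1]{Stanley:1989} --- so there is nothing internal to compare against; the question is only whether your reduction is sound, and it essentially is. The first half is correct and routine: writing $\jackj_\mu = H^L_\mu P_\mu$, $\jackj_{(r)} = H^U_{(r)} Q_{(r)}$, $\jackj_\lambda = H^U_\lambda Q_\lambda$ and using $\langle P_\nu, Q_\lambda\rangle = \delta_{\nu\lambda}$ does give $\langle \jackj_\mu \jackj_{(r)}, \jackj_\lambda\rangle = \varphi_{\lambda/\mu}\, H^L_\mu H^U_{(r)} H^U_\lambda$, and your reformulation of the theorem as an identity for the Pieri coefficient is exactly right. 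Two remarks on the second half. First, a labelling slip: the coefficient in $P_\mu\, g_r = \sum_\lambda \varphi_{\lambda/\mu} P_\lambda$ is Macdonald's $\varphi_{\lambda/\mu}$ (horizontal strips); $\psi'_{\lambda/\mu}$ is the vertical-strip coefficient for multiplication by $e_r$, so be sure you are quoting the right formula. Second, if you quote Macdonald VI~(6.24) in the form $\varphi_{\lambda/\mu} = \prod_{s \in C_{\lambda/\mu}} b_\lambda(s)/b_\mu(s)$, where $C_{\lambda/\mu}$ is the set of boxes in columns meeting the strip and $b_\lambda(s) = \lh{\lambda}{s}/\uh{\lambda}{s}$ in the Jack specialization (with $b_\mu(s)=1$ for $s\notin\mu$), then your target identity is \emph{literally} that formula --- the column-by-column telescoping you describe as the ``main obstacle'' is unnecessary, and the proof closes immediately. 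The bookkeeping is only needed if you insist on re-deriving $\varphi_{\lambda/\mu}$ from the raw arm/leg product form, and as written that verification remains a sketch rather than a completed argument; I would simply cite the $b_\lambda/b_\mu$ form instead.
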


In 2013, another case of Conjecture~\ref{conj:strongStanley} was 
proven using a technique involving vertex operators.
\begin{theorem}[Strong Stanley conjecture, rectangular case, see {\cite[4.7]{CaiJing:2013}}]\label{thm:stanleyrectangular}
Let $\mu \subseteq m^n$. 
The Stanley structure coefficient in the rectangular case has the following expression as a product of lower/upper hook lengths:
\begin{equation}\label{eq:stanleyrectangular}
\langle \jackj_\mu  \jackj_{\bmu} , \jackj_{m^n} \rangle =
\left( \prod_{b \in \mu}  \lh{\mu}{b}   \right)
\left( \prod_{b \in \bmu}  \uh{\bmu}{b}  \right)
\left( \prod_{b \in m^n}  D_{\mu,m^n}(b) \right),
\end{equation}
where, for $b\in m^n$, we have defined
\begin{equation}\label{defn:rectD}
\defin{ D_{\mu,m^n}(b)} \coloneqq
\begin{cases}
\uh{m^n}{b} & \text{ if $(b_1,n-1-b_2) \in \mu$} \\
\lh{m^n}{b} &  \text{ otherwise}.
\end{cases}
\end{equation}
\end{theorem}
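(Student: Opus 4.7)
The plan is to proceed by induction on $n$, the number of rows of the rectangle, with the base case $n=1$ following directly from the Jack Pieri rule. For $n=1$, the only subpartitions of $(m)$ are $\mu = (k)$ for $0 \le k \le m$ with $\bar\mu = (m-k)$, and the coefficient $\langle \jackj_{(k)} \jackj_{(m-k)}, \jackj_{(m)}\rangle$ is read off by applying the Pieri rule to the horizontal $(m-k)$-strip $(m)/(k)$. A short column-by-column check shows that $A_{(k),(m)}(b) = \lh{(k)}{b}$ for every $b \in \mu$ (since the strip meets no column of $\mu$) and that $A'_{(k),(m)}(b) = D_{(k),(m)}(b)$ for every $b \in (m)$, so the Pieri formula and the claimed formula coincide.

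For the inductive step, assume the result for $m^{n-1}$ and let $\bar\mu^{\flat} = (\bar\mu_1,\dotsc,\bar\mu_{n-1})$ be $\bar\mu$ with its topmost row (of length $\bar\mu_n = m - \mu_1$) removed. By the Jack Pieri rule,
\[
\jackj_{\bar\mu^{\flat}} \cdot \jackj_{(m-\mu_1)} = \pi \cdot \jackj_{\bar\mu} + \sum_{\lambda \neq \bar\mu} \pi_\lambda \jackj_\lambda,
\]
where $\pi,\pi_\lambda \in \setQ(\alpha)$ are Pieri coefficients and $\lambda$ ranges over the other partitions obtained from $\bar\mu^{\flat}$ by a horizontal $(m-\mu_1)$-strip. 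Pairing against $\langle \jackj_\mu \cdot\, , \jackj_{m^n}\rangle$ and using associativity of the product,
\[
\pi \cdot \langle \jackj_\mu \jackj_{\bar\mu}, \jackj_{m^n}\rangle = \langle \jackj_\mu \jackj_{\bar\mu^{\flat}} \jackj_{(m-\mu_1)}, \jackj_{m^n}\rangle - \sum_{\lambda \neq \bar\mu} \pi_\lambda \langle \jackj_\mu \jackj_\lambda, \jackj_{m^n}\rangle,
\]
where I would argue the extraneous sum vanishes term by term. The principal term on the right is then treated by dually peeling a top row of length $m$ off of $\jackj_{m^n}$ via the Pieri rule, dropping to the rectangle $m^{n-1}$ with the data $\mu^{\flat} := (\mu_2,\dotsc,\mu_n)$ and $\bar\mu^{\flat}$; the induction hypothesis reassembles to the claimed product of hooks.

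The \textbf{main obstacle} is twofold. First is the combinatorial bookkeeping: stripping off a row of $\bar\mu$ or of $m^n$ shifts arms and legs of all remaining boxes in the affected columns, and the condition $(b_1, n{-}1{-}b_2) \in \mu$ defining $D_{\mu,m^n}$ must be reconciled with its analogue for $D_{\mu^{\flat},m^{n-1}}$, requiring a careful row-by-row comparison of column occupancies to check that the Pieri factors on the right telescope correctly. Second is the vanishing of the extraneous Pieri terms: at the Schur level this is the rigidity statement $c^{m^n}_{\mu,\lambda} = 0$ for $\lambda \neq \bar\mu$ (the uniqueness of LR-tableaux of rectangular shape), but the corresponding Jack-level vanishing $\jackjLR^{m^n}_{\mu,\lambda}(\alpha) = 0$ is a priori stronger and would follow from the unproven Stanley positivity conjecture. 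An alternative route, taken in \cite{CaiJing:2013}, is to realize Jack polynomials via vertex operators on the Fock space and to factor the operator creating $\jackj_{m^n}$ into row-creating pieces; the desired product formula then emerges from standard commutation relations and sidesteps the vanishing issue altogether.
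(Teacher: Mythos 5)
Your proposal is a plan rather than a proof, and its central step is exactly the piece you flag as unresolved: the vanishing $\jackjLR_{\mu,\lambda}^{m^n}(\alpha)=0$ for the extraneous Pieri terms $\lambda\neq\bmu$. This does not follow from the Schur-level rigidity $c_{\mu,\lambda}^{m^n}=0$, because vanishing of a rational function in $\alpha$ at $\alpha=1$ says nothing about identical vanishing, and the known support constraints for Jack products (namely $\mu,\lambda\subseteq m^n$ and dominance $m^n\le\mu+\lambda$) do not rule out such $\lambda$ in general. Absent Stanley positivity, this is a genuine gap, not bookkeeping. The same problem recurs in your treatment of the ``principal term'': peeling a row off $\jackj_{m^n}$ dually again produces a sum over partitions of which only one is wanted, so the induction does not close. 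The base case $n=1$ via the Pieri rule is fine, but everything above it is conditional.

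For contrast, the paper takes a completely different route that sidesteps the vanishing issue. It uses the sum--product identity \eqref{eq:sumproductident}: taking the residue at $u=[\bms{v}]$, where $\bms{v}=(m-1,n-1)$, isolates the single coefficient $\jackjLR_{\mu,\bmu}^{m^n}$ because $m^n$ is the \emph{unique} partition of $mn$ whose diagram contains that box --- a purely combinatorial uniqueness, requiring no knowledge of which other Jack coefficients vanish. The analytic content is then a closed-form factorization of $T_{\mu\*\bmu}(u)$ into three quadrant products of hook factors (proved by induction on the boxes of $\mu$ using the Expansion and Flip lemmas), and the hook-product formula \eqref{eq:stanleyrectangular} drops out of the residue. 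If you want to salvage a Pieri-style induction, you would need an independent proof of the Jack-level support statement for rectangular targets; the residue-extraction mechanism is precisely what makes that unnecessary.
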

For example, $\langle J_{211}J_{221},J_{333} \rangle$ is computed using the following hook assignments
\begin{equation*}
\ytableausetup{boxsize=1.1em}
\begin{ytableau}
*(boxL) L   \\
*(boxL) L   \\
*(boxL) L & *(boxL) L
\end{ytableau}\quad
\begin{ytableau}
*(boxU) U  \\
*(boxU) U & *(boxU) U  \\
*(boxU) U & *(boxU) U
\end{ytableau}\quad
\begin{ytableau}
*(boxU) U & *(boxU) U & *(boxL) L \\
*(boxU) U & *(boxL) L & *(boxL) L  \\
*(boxU) U & *(boxL) L & *(boxL) L
\end{ytableau}.
\end{equation*}

In the paper \cite{CaiJing:2013}, the authors provide formula~\eqref{eq:stanleyrectangular}
and note that the answer must be symmetric under the 
exchange of $\mu \leftrightarrow \bmu$, however that symmetry is not explicitly demonstrated in this formula.

%The strong Stanley conjecture has recently been proven \cite{Matveev:2023} in the case where the Kostka coefficient satisfies $K_{\mu, \lambda\setminus\nu} = 1$. However, we understand the proof therein to not be constructive, that is, no explicit prescription for a choice of upper or lower hooks is presented for each box in the three partitions. 

In this paper, we provide an explicit demonstration for the symmetry property in formula~\eqref{eq:stanleyrectangular}, by re-proving it using a new technique.
Furthermore, we prove a new case of Conjecture~\ref{conj:strongStanley}, 
the \emph{rectangular union} case, which generalises the 
rectangular case (Thm.~\ref{thm:stanleyrectangular}), and provide an
explicit prescription of the required hook choices. In particular, let $\mu$ be a partition and $m^n$ be 
\emph{any} rectangular partition. Let $\sigma = \mu \cap m^n$, and let $\bsigma$
be the complementary partition of $\sigma$ in $m^n$. 
Note that this reduces to the rectangular case when $\mu \subset m^n$.

\begin{figure}[!htb]
\centering
\includegraphics[scale=1.1,page=5]{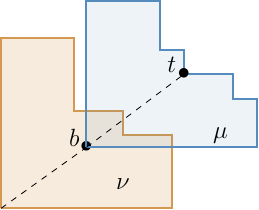}
\caption{
The setup of the rectangular union case.
}
\label{fig:rectunioncaseintro}
\end{figure}

\begin{theorem}[Strong Stanley conjecture, rectangular union case, see {Cor.~\ref{cor:strongStanley}}]\label{thm:rectUnion}
The strong Stanley conjecture holds for $\langle J_\mu J_{\bsigma} , J_{\mu \cup m^n} \rangle$. 
The assignment of lower and upper hooks can be read off from Figure~\ref{fig:rectunionLR}.
\end{theorem}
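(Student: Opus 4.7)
The plan is to reduce the rectangular union case to the pure rectangular case of Theorem~\ref{thm:stanleyrectangular}, combined with a careful tracking of how hook lengths transform when we pass from $m^n$ to the larger shape $\mu \cup m^n$. Write $\sigma = \mu \cap m^n$ and let $\tau$ be the set of boxes of $\mu$ lying outside $m^n$; these boxes occupy the first $n$ rows in columns of index $\geq m$, and $\mu \cup m^n$ is the disjoint union $m^n \sqcup \tau$ as sets of boxes. The data of the hook assignments on $\bsigma$ will be fixed by Theorem~\ref{thm:stanleyrectangular}, so the whole problem is to transport the rectangular-case prescription from $m^n$ to $\mu \cup m^n$ while simultaneously upgrading $\sigma$ to $\mu$ on the other factor, and to show that the correction terms produced by the two sides cancel into the desired product of hooks.

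First I would factor the structure coefficient
\[
\langle \jackj_\mu \jackj_{\bsigma} , \jackj_{\mu \cup m^n} \rangle
\]
through the base case $\langle \jackj_\sigma \jackj_{\bsigma}, \jackj_{m^n} \rangle$. The natural mechanism for this is the star product formalism developed in the second author's recent work (indicated in the preamble by the $\mathbin{\star}$ operator and the hook variants $\sh{}{}$), which should express the LR coefficient for a shape decomposing as $m^n \sqcup \tau$ in terms of the rectangular coefficient multiplied by explicit hook ratios supported along the rows and columns common to $m^n$ and $\tau$. Applying Theorem~\ref{thm:stanleyrectangular} to the rectangular factor produces the assignment~\eqref{defn:rectD} on $m^n$; the ratios from the star product then shift this assignment into an assignment on $\mu \cup m^n$ and simultaneously grow $\sigma$ into $\mu$ by a sequence of horizontal strips, each of which contributes hook factors exactly of the shape predicted by the Jack Pieri rule.

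Second I would check, row by row in the extension region $\tau$, that the Pieri factors accumulated while growing $\sigma \to \mu$ combine with the shifted rectangular assignment to yield a clean product of upper/lower hook values with no residual ratios. The combinatorial content is that each hook on a box $b \in \mu \cup m^n$ lying in a column shared with $\tau$ receives a correction from $\tau$ that is already present in the Pieri rule's lower-hook contribution on $\mu$, and dually for rows. Finally, I would identify the resulting assignment with the explicit pattern displayed in Figure~\ref{fig:rectunionLR}, and verify the count that $\lh{}{b}$ and $\uh{}{b}$ are each chosen exactly $|\mu \cup m^n|$ times, as required by Conjecture~\ref{conj:strongStanley}.

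The hard part will be the factorization step: unlike in the rectangular case, where Cai--Jing use a single vertex operator identity, here the shape $\mu \cup m^n$ has nontrivial overlap structure between the rectangle and the extension, so the hooks in $m^n$ as a subshape of $\mu \cup m^n$ differ from the intrinsic hooks of $m^n$ in every row and column that meets $\tau$. Showing that these discrepancies cancel exactly against the Pieri factors produced while building $\mu$ from $\sigma$ is the main combinatorial content, and is where the star-product machinery of the second author is essential. Everything after this, including the hook count and the identification with Figure~\ref{fig:rectunionLR}, reduces to a direct verification case by case on boxes of $\mu$, $\bsigma$, and $\mu \cup m^n$.
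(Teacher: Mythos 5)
The central step of your plan---the factorization of $\langle \jackj_\mu \jackj_{\bsigma}, \jackj_{\mu\cup m^n}\rangle$ through the rectangular base case $\langle \jackj_\sigma \jackj_{\bsigma}, \jackj_{m^n}\rangle$---is exactly where the paper's proof lives, but you flag it as ``the hard part'' without resolving it, and the mechanism you gesture at is not the one that actually works. You suggest building $\mu$ from $\sigma$ by horizontal strips and invoking the Jack Pieri rule to produce the correction factors. That is not how the recursion goes: the paper instead applies the sum--product identity~\eqref{eq:sumproductident} from \cite{Mickler:2023a} and takes the residue of $T_{\mu\*\bsigma}(u)$ at $u=[\bms{v}]$, observing that $\mu\cup m^n$ is the unique partition of its size containing both $\mu$ and the box $\bms{v}$. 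Since $T_{\mu\*\bsigma}(u) = T_{(\mu/\sigma)\*\bsigma}(u)\,T_{\sigma\*\bsigma}(u)$ and the first factor is regular at $[\bms{v}]$, one gets the clean multiplicative formula $\hat\jackjLR_{\mu,\bsigma}^{\mu\cup m^n} = \big(\prod_{b\in\mu/\sigma} T_{\bsigma}([\bb])\big)\,\hat\jackjLR_{\sigma,\bsigma}^{m^n}$, after which each factor $T_{\bsigma}([\bb])$ is expressed as an explicit hook ratio via the Expansion Lemma~\ref{lemma:expansion}. Nothing Pieri-like is used, and there is no reason to expect the Pieri rule to supply the needed factors, since $\bsigma$ is fixed while $\mu$ grows, so the intermediate products are not products with a one-row partition.

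There is also a geometric slip in your setup. You write that the boxes of $\mu$ outside $m^n$ ``occupy the first $n$ rows in columns of index $\geq m$,'' but that only describes the piece $\mu_3$ to the right of the rectangle. In general $\mu$ can also overflow $m^n$ from above (rows of index $\geq n$, columns $< m$), which is the piece $\mu_1$ in the paper's decomposition $\mu = \mu_0\cup\mu_1\cup\mu_2\cup\mu_3$. The two overflow regions behave differently: adding a box of $\mu_1$ only touches column products and extends a column of $L$-hooks, while adding a box of $\mu_3$ requires a genuine rearrangement (swapping and re-splitting rows of $U$- and $L$-hooks). Your row-by-row sketch only engages with the $\mu_3$-type behaviour. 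So even if you had the factorization lemma, the ``direct verification case by case'' you defer to at the end would have to handle a case your description omits.
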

For example, $\langle J_{42211}J_{211}, J_{43331}\rangle $
is computed using the following hook assignments
\begin{equation*}
\ytableausetup{boxsize=1.1em}
\begin{ytableau}
*(boxL) L   \\
*(boxL) L   \\
*(boxL) L & *(boxL) L \\
*(boxL) L & *(boxL) L \\
*(boxU) U & *(boxU) U & *(boxU) U & *(boxU) U \\
\end{ytableau}\quad
\begin{ytableau}
*(boxU) U  \\
*(boxU) U  \\
*(boxU) U & *(boxU) U
\end{ytableau}\quad
\begin{ytableau}
*(boxU) U \\
*(boxU) U & *(boxU) U & *(boxL) L \\
*(boxU) U & *(boxU) U & *(boxL) L \\
*(boxU) U & *(boxL) L & *(boxL) L \\
*(boxL) L & *(boxL) L & *(boxL) L & *(boxL) L \\
\end{ytableau}. \\
\end{equation*}

The recent paper by Matveev--Wei \cite{Matveev:2023} proves Conjecture~\ref{conj:strongStanley} in the special case when 
we have $c_{\mu\nu}^\lambda = K_{\mu, \lambda-\mu}=1$, where the $K_{\mu, \lambda-\mu}$
is a Kostka coefficient.  
This required equality between Kostka coefficients and Littlewood--Richardson coefficients 
holds in the rectangular case, i.e., the setup in Theorem~\ref{thm:stanleyrectangular} is covered by their work.
We also understand the proof therein to not be constructive, that is, 
no explicit prescription for a choice of upper or lower hooks is presented for each box in the three partitions.

However, our Theorem~\ref{thm:rectUnion} is in general \emph{not} covered by the Matveev--Wei result,
as we sometimes have the strict inequality $c_{\mu\nu}^\lambda < K_{\mu, \lambda-\mu}$ in the rectangular union case. 
In the example above, we have $c^{43331}_{42211,211} =1$
but $K_{42211,43331-211} = K_{42211,22231} = 3$, as evidenced by the three semistandard Young tableax
\[
\ytableausetup{boxsize=1.0em}
 \ytableaushort{1144,22,33,4,5} \quad \ytableaushort{1134,22,34,4,5} \quad  \ytableaushort{1124,23,34,4,5}.
\]

\subsection{A formula involving Jack structure constants}

For any multiset\footnote{A collection with multiplicities.} of boxes $\Gamma$, we define the rational function\footnote{This also depends on $\alpha$.}
$T_{\Gamma}(u) : \setR \to \setR$ by
\begin{equation}\label{def:TGamma}
\defin{ T_{\Gamma}(u) } \coloneqq \prod_{b \in \Gamma} T_\lbox(u-[b])
\text{ where }
\defin{T_\lbox(u)} \coloneqq \frac{(u-[0,0])(u-[1,1])}{(u-[1,0])(u-[0,1])} .
\end{equation}
We give some examples of such functions in Example~\ref{ex:tmu}.

For any two Young diagrams $\lambda, \mu$, we define the \defin{star product} $\lambda \* \mu$ as the multiset of boxes
\[
 \defin{\lambda \* \mu} \coloneqq \bigsqcup_{\substack{s\in \lambda \\ t \in \mu}} s+t.
\]
For example, $31 \* 221$ is computed as
\begin{equation*}
\ytableausetup{boxsize=1.0em} \ydiagram{1,3}\, \*\, \ydiagram{1,2,2} \,=\,\ytableausetup{boxsize=1.0em}
\begin{ytableau}
\, \\ 
2 & 2  & \, \\ 
2 & 3 & 2 & \, \\ 
\, & 2& 2 & \, 
\end{ytableau}
\end{equation*}
where the number inside a box indicates its multiplicity (if $> 1$). 
Lastly, we define
$\defin{\jacktop_\mu} \coloneqq \prod_{b \in \mu^\times} [b]$.

With these tools introduced, we can now state the main motivating result that is used in this work.
\begin{theorem}[\cite{Mickler:2023a}]
Let $\mu,\nu$ be partitions and $n= |\mu|+|\nu|$, 
then the following `sum-product' identity of rational functions (in a complex variable $u$) holds,
involving the Jack Littlewood--Richardson coefficients defined earlier
in \eqref{eq:jackLR}:
\begin{equation}\label{eq:sumproductident}
\sum_{\substack{\gamma\partition  n \\ \mu,\nu \subseteq \gamma } }  \jackjLR_{\mu\nu}^{\gamma}(\alpha) \frac{\jacktop_\gamma}{\jacktop_\mu \jacktop_\nu} \left( \sum_{s \in \gamma \setminus (\mu \cup \nu)} \frac{1}{u-[s]} \right)  = T_{\mu\* \nu}(u) - 1.
\end{equation}
\end{theorem}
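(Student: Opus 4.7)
The plan is to prove the identity by treating both sides as rational functions of $u$ and matching their residues at every pole. Both sides vanish as $u \to \infty$ (the LHS by inspection; the RHS because each factor $T_\lbox(v) \to 1$ as $v \to \infty$), so residue matching is sufficient to conclude equality.

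For the right-hand side, the first step is to simplify $T_\lambda(u)$ for an honest partition $\lambda$. A direct telescoping of the factors $T_\lbox(u - [b])$ across neighbouring boxes of $\lambda$ should yield the closed form
\[
T_\lambda(u) \;=\; (u-[0,0]) \cdot \frac{\prod_{s \in \remset^+_\lambda}(u-[s])}{\prod_{s \in \addset_\lambda}(u-[s])},
\]
so that $T_\lambda(u)$ has simple poles exactly at the contents of the outer corners $\addset_\lambda$. For the multiset $\mu \* \nu$ one needs a version adapted to multiplicities: one may write $\mu \* \nu$ as a union of translated copies of $\mu$ (one for each box of $\nu$) and track which factors survive. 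The surviving poles on the RHS should be located at $[s]$ for $s$ running over outer corners of the various $\mu \cup \nu$-containing partitions $\gamma$ that can contribute on the LHS.

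On the left-hand side, the only poles in $u$ come from the explicit factor $\frac{1}{u-[s]}$ for $s \in \gamma \setminus (\mu \cup \nu)$, so the residue at $u = [s_0]$ equals
\[
\Res_{u=[s_0]}\,(\text{LHS}) \;=\; \sum_{\substack{\gamma \partition n \\ s_0 \in \gamma \setminus (\mu \cup \nu)}} \jackjLR_{\mu\nu}^{\gamma}(\alpha)\,\frac{\jacktop_\gamma}{\jacktop_\mu \jacktop_\nu}.
\]
I would prove these residues agree with the RHS residues by induction on $|\nu|$. The base case $|\nu| = 0$ is trivial since both sides vanish. For the inductive step, use the Jack Pieri rule together with associativity $(\jackj_\mu \jackj_{\nu'})\jackj_{(k)} = \jackj_\mu(\jackj_{\nu'}\jackj_{(k)})$ to write $\jackjLR_{\mu\nu}^{\gamma}$ as a convolution of a Pieri coefficient with $\jackjLR_{\mu\nu'}^{\gamma'}$ for some $\nu' \subset \nu$. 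The identity for $(\mu,\nu)$ then reduces to the identity for $(\mu,\nu')$ and the Pieri case, which can be checked directly from the explicit Pieri formula and the telescoping of the first step.

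The main obstacle is this inductive/associativity step: the Pieri recursion produces an alternating combinatorial sum for the residues, and matching it against the telescoped RHS requires delicate bookkeeping of the ratios $\jacktop_\gamma/(\jacktop_\mu \jacktop_\nu)$ under single-box additions. A potentially cleaner approach, which I would pursue in parallel, is to realize both sides as matrix elements of a generating-function operator built from the Cherednik--Dunkl operators $D_i$, whose eigenvalues on Jack polynomials are precisely the contents $[s]$. If both sides can be expressed as an expectation of $\sum_i (u-D_i)^{-1}$ between Jack-product states, then the identity should follow from the structural/commutation relations of the $D_i$, bypassing the combinatorial induction entirely.
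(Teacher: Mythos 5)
The paper does not actually prove this statement --- it is imported verbatim from \cite{Mickler:2023a}, so there is no in-paper proof to compare against. I can only assess your plan on its own merits, and as it stands it has a genuine gap.

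Your framing is sound: both sides vanish at $u=\infty$, the LHS has at most simple poles located at the contents $[s]$ of boxes $s\in\gamma\setminus(\mu\cup\nu)$, and the closed form you write for $T_\lambda(u)$ is exactly \eqref{eq:TmuFormula} in the paper. So residue matching is the right equivalence to aim for. The problem is that you have reduced the theorem to a residue identity without actually proving that identity, and the step you defer --- the Pieri/associativity induction --- is the entire content of the result, not bookkeeping. Concretely: $J_{\nu'}J_{(k)}$ is a sum over all horizontal $k$-strip extensions of $\nu'$, so $J_\nu$ is only one summand; to isolate $\jackjLR_{\mu\nu}^\gamma$ from $(J_\mu J_{\nu'})J_{(k)}$ you must invert a (dominance-)triangular system, and you then need this inversion to commute with the transformation $\gamma\mapsto\jacktop_\gamma/(\jacktop_\mu\jacktop_\nu)$ and with the telescoping of $T_{\mu\*\nu}$ versus $T_{\mu\*\nu'}$. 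None of that is written, and it is not obvious that the alternating sums close nicely.

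There is also a subtlety you state as if it were clear but which requires an argument: you assert the RHS poles lie exactly at outer corners of admissible $\gamma$. In fact the pole structure of $T_{\mu\*\nu}$ is delicate --- Lemma~\ref{lem:Torderformula} and Lemma~\ref{lemma:Tpoles} in this paper are devoted precisely to showing the order of the pole at a given $[s]$ is a difference of counts which can be zero or negative. When the order is $\le 0$ at some $s_0\in\gamma\setminus(\mu\cup\nu)$, the LHS residue at $[s_0]$ must vanish, which is a nontrivial linear relation among the $\hat\jackjLR_{\mu\nu}^{\gamma}$. Your sketch does not acknowledge this, and it is a real constraint your induction would have to reproduce. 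The parallel route you mention --- expressing both sides as expectations of a resolvent of a Cherednik--Dunkl/Nazarov--Sklyanin-type operator --- is very likely the mechanism behind \cite{Mickler:2023a} (cf.\ the acknowledgements here), and it is a better bet than the Pieri induction, but as written it is a one-sentence hope rather than an argument. To turn this into a proof you would need to (i) exhibit the operator whose resolvent trace against the pair $(J_\mu,J_\nu)$ gives the LHS, (ii) show its action produces $T_{\mu\*\nu}(u)-1$, and (iii) justify interchange of the spectral sum with the coefficient extraction. None of these steps appears in the proposal.
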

For ease of notation, we set
\begin{equation*}
\defin{\hat \jackjLR_{\mu\nu}^{\gamma}} \coloneqq 
\jackjLR_{\mu\nu}^{\gamma}(\alpha) \frac{\jacktop_\gamma}{\jacktop_\mu \jacktop_\nu}.
\end{equation*}

\subsection{Properties of \texorpdfstring{$T_{\mu\*\nu}$}{Tmunu}}
To make use of \eqref{eq:sumproductident}, we will
need to produce some structural
results about the right hand side.
By definition, we have
\[ 
T_{\mu\* \nu}(u)  = \prod_{b \in \mu\* \nu } T_\lbox(u-[b])= \prod_{ \substack{s \in \mu \\ t \in \nu }} T_\lbox(u-[s+t]).
\]
Following from Definition~\ref{def:TGamma}, we may view $T_\lbox(u)$ as the rational
function with zeros at coordinates $(0,0)$ and $(1,1)$, and poles in $(1,0)$
and $(0,1)$. These coordinates are the four vertices of the \emph{box} $(0,0)$.
This allows us to use Young diagrams to visualize the rational function $T_{\mu}(u)$ or $T_{\mu\* \nu}(u)$,
as we see in the following example.

\begin{example}\label{ex:tmu}
With $\mu = 331$, we have that
\[
  T_{\mu}(u) = \frac{(u-[0,0])(u-[1,3])(u-[3,2])}{(u-[0,3])(u-[1,2])(u-[3,0])}.
\]
We can illustrate such a rational function by marking the zeros and poles in the plane.
Let us also set $\nu = 44311$.
Then $T_{\mu}(u)$, $T_{\nu}(u)$ and $T_{\mu \* \nu}(u)$ can be illustrated as
\[
\includegraphics[scale=1.0,page=1]{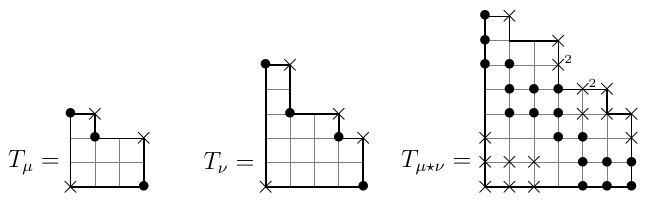}
\]
where $\bullet$ represents a simple pole, 
and $\times$ and $\times^2$ represent a zero of order one and order $2$, respectively.
Note that the number of poles and the number of zeros (counting multiplicity) must be the same in every row (and every column)
of the diagram since any $T_{\Gamma}$
is a product of $T_{\lbox}(u-[b])$ over $b \in \Gamma$.
\end{example}

Note that the poles of $T_{\mu}$ are precisely the elements in $\addset_\mu$,
while the zeros are given by $\remset^+_\mu \cup \{(0,0)\}$.
In other words,
\begin{equation}\label{eq:TmuFormula}
 T_{\mu}(u) = u\cdot \frac{\prod_{s \in \remset_\mu^+ }(u-[s])}{\prod_{t
 \in\addset_\mu}(u-[t ])}.
\end{equation}

\begin{lemma}\label{lem:Torderformula}
For any partitions $\mu$, $\nu$, 
the order of the pole at $u=[s]$ of the function $T_{\mu\* \nu}(u)$
is given by the difference
\begin{equation}\label{eq:Torderformula}
\ord_{u=[s]} T_{\mu\*\nu}(u) = \left| \{  b \in \nu :   s-b \in \addset_\mu \} \right| - \left| \{  b \in \nu : s- b \in ( {\remset^+_\mu} \cup (0,0))\} \right|.
\end{equation}
\end{lemma}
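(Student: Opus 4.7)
The plan is to factor $T_{\mu\*\nu}(u)$ over $\nu$ so that each factor becomes a shift of $T_\mu$, then apply the explicit formula \eqref{eq:TmuFormula} to read off the orders of zeros and poles at $u=[s]$.

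First, since the bracket is additive, $[s+t]=[s]+[t]$, one has the factorisation
\begin{equation*}
T_{\mu\*\nu}(u)\;=\;\prod_{s\in\mu,\,t\in\nu}T_\lbox\!\bigl(u-[s]-[t]\bigr)\;=\;\prod_{b\in\nu}T_\mu\!\bigl(u-[b]\bigr).
\end{equation*}

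Next, by \eqref{eq:TmuFormula}, the rational function $T_\mu(v)$ has simple poles exactly at $\{[t]:t\in\addset_\mu\}$ and simple zeros exactly at $\{[s']:s'\in\{(0,0)\}\cup\remset^+_\mu\}$. That these poles and zeros do not collide---so that \eqref{eq:TmuFormula} is already in lowest terms---rests on two facts. The first is injectivity of the bracket map $s\mapsto[s]$, which is immediate because $\alpha$ is an indeterminate. The second is pairwise disjointness of $\addset_\mu$, $\remset^+_\mu$, and $\{(0,0)\}$ as subsets of $\setZ^2$ for non-empty $\mu$; this is a direct inspection of the corner geometry, since if some $(x,y)$ lay in both $\addset_\mu$ and $\remset^+_\mu$, with the corresponding inner corner $(x-1,y-1)\in\mu$, then $(x,y-1)$ would need to be both in $\mu$ (by the outer-corner condition at $(x,y)$) and outside $\mu$ (by the inner-corner condition at $(x-1,y-1)$).

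Shifting $v\mapsto u-[b]$ translates the zero/pole locations by $b$ in the coordinate plane. Hence $T_\mu(u-[b])$ contributes $+1$ to the order of the pole at $u=[s]$ when $s-b\in\addset_\mu$ and $-1$ when $s-b\in\{(0,0)\}\cup\remset^+_\mu$, and these two conditions are mutually exclusive for a fixed $b$ by the disjointness established above. Summing these contributions over $b\in\nu$, using additivity of orders under multiplication of rational functions, yields \eqref{eq:Torderformula}. The argument is essentially bookkeeping; the only subtlety worth emphasising is the within-factor disjointness that rules out spurious cancellations inside a single $T_\mu(u-[b])$, after which the remaining cancellations across different $b\in\nu$ are exactly what the difference in \eqref{eq:Torderformula} encodes.
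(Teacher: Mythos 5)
Your proof is correct and follows essentially the same route as the paper: factor $T_{\mu\*\nu}(u)=\prod_{b\in\nu}T_\mu(u-[b])$ using additivity of the bracket, then read off the poles (from $\addset_\mu$) and zeros (from $\remset^+_\mu\cup\{(0,0)\}$) of each shifted factor via \eqref{eq:TmuFormula} and sum the resulting orders over $b\in\nu$. The paper's own proof is essentially just this factorization and the observation about where the zeros and poles sit; you additionally verify injectivity of the bracket map and the pairwise disjointness of $\addset_\mu$, $\remset^+_\mu$, $\{(0,0)\}$, which the paper leaves implicit. That extra step is not logically required for \eqref{eq:Torderformula}, since orders of rational functions are additive regardless of cancellation, but it is a reasonable detail to spell out.
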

\begin{proof}
We note that $T_{\mu \* \nu}(u) = \prod_{b\in \nu} T_{\mu}(u-[b])$. 
By \eqref{eq:TmuFormula}, the poles of $T_{\mu}(u-[b])$ are at $t+b$ for $t\in \addset_\mu$
and similarly its zeros are at $\bps{t} + b$, for $\bps{t} \in \remset^+_\mu$ and an extra zero at $u=[b]$.
From these observations we can deduce \eqref{eq:Torderformula}.
\begin{figure}[!htb]
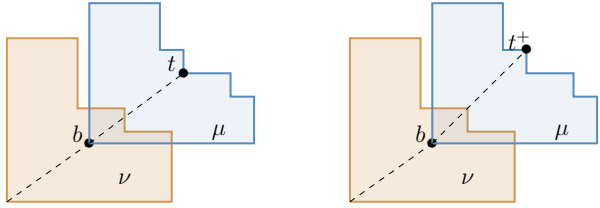

\centering
\includegraphics[scale=0.75,page=1]{tikzFigures}
\hspace{1cm}
\includegraphics[scale=0.75,page=2]{tikzFigures}
\caption{Left: The contributions to poles at $[s]$ of $T_{\mu\*\nu}$.
Here, $b\in \nu$, $t \in \addset_\mu$ and $s = t+b$.
Right:
The contributions to zeroes at $[s]$ of $T_{\mu\*\nu}$.
Here, $b\in \nu$, $t' \in \remset^+_\mu \cup (0,0)$ and $s = \bps{t} + b$.
}\label{fig:contribPoles}
\end{figure}
\end{proof}

\begin{lemma}\label{lemma:Tpoles} Fix $\mu$ and some $s \in \setZ^2$. 
The orders of the poles of $T_{\mu \* \nu}(u)$ are bounded by
\[ 
1- \max( |\addset_\mu|,|\addset_\nu|) \leq \mathrm{ord}_{u=[s]} T_{\mu\*\nu}(u) \leq 1 .
\]
Furthermore, $[\bms{s}]$ is a simple pole of $T_{\mu\*\nu}(u)$  if and only 
if $\bms{s} \notin \nu$ and $\lambda \subseteq \nu$, 
where $\lambda^R$ is the shape between $\mu$ and $s$.
\end{lemma}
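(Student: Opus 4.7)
The plan is to invoke Lemma~\ref{lem:Torderformula} and work combinatorially with the two sets
\[
P(s) = \{b\in\nu : s-b\in\addset_\mu\}, \qquad Z(s) = \{b\in\nu : s-b \in \remset^+_\mu \cup\{(0,0)\}\},
\]
whose cardinality difference equals $\ord_{u=[s]} T_{\mu\*\nu}(u)$. Enumerate the corners along the staircase boundary of $\mu$ as $\addset_\mu = \{a^{(0)},\ldots,a^{(k)}\}$, with $a^{(0)} = (0,\ell(\mu))$, $a^{(k)} = (\mu_1,0)$ and column coordinates strictly increasing, and $\remset^+_\mu \cup \{(0,0)\} = \{r^{(0)},\ldots,r^{(k)}\}$ with $r^{(0)} = (0,0)$. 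The key geometric identity is $r^{(i)} = (a^{(i)}_1, a^{(i-1)}_2)$ for $i \geq 1$, visible from the boundary path: from $a^{(i-1)}$ one walks right to $r^{(i)}$ and then down to $a^{(i)}$. Setting $B^{(i)} = s-a^{(i)}$ and $C^{(i)} = s-r^{(i)}$, this identity means $C^{(i)}$ shares its column with $B^{(i)}$ and its row with $B^{(i-1)}$.

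For the upper bound $\ord \leq 1$, I construct an injection $\phi\colon P(s) \to Z(s)$ by $\phi(B^{(i)}) = C^{(i+1)}$ for $0 \leq i \leq k-1$. Since $C^{(i+1)}$ lies in the same row as $B^{(i)}$ but strictly to its left, the Young-diagram property of $\nu$ forces $B^{(i)} \in \nu \Rightarrow C^{(i+1)} \in \nu$ whenever $C^{(i+1)}$ has non-negative column. The map is manifestly injective. The possible exceptions are (i) $B^{(k)}$, with no target $C^{(k+1)}$, and (ii) some $B^{(i^*)}$ with $i^* < k$ and $a^{(i^*+1)}_1 > s_1$, so that $C^{(i^*+1)}$ has negative column. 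The crucial observation: if (ii) occurs, then $a^{(k)}_1 \geq a^{(i^*+1)}_1 > s_1$, forcing $B^{(k)}$ to have negative column and hence $B^{(k)} \notin \nu$. At most one exception therefore lies in $P(s)$, yielding $|P(s)| \leq |Z(s)| + 1$.

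For the lower bound, $|Z(s)| \leq |\remset^+_\mu \cup \{(0,0)\}| = |\addset_\mu|$ trivially. When equality holds, every $C^{(i)}$ is in $\nu$; in particular $s = C^{(0)} \in \nu$ and $C^{(k)}$ has non-negative column, forcing $s_1 \geq a^{(k)}_1 = \mu_1$. Then $B^{(k)} = (s_1 - \mu_1, s_2)$ is weakly down-left of $s$, hence in $\nu$, so $|P(s)| \geq 1$. Thus $|Z(s)| - |P(s)| \leq |\addset_\mu| - 1$, i.e.\ $\ord \geq 1 - |\addset_\mu|$. The symmetry $T_{\mu\*\nu} = T_{\nu\*\mu}$ gives the analogous bound with $\nu$, and the two together imply $\ord \geq 1 - \max(|\addset_\mu|,|\addset_\nu|)$ (indeed, the stronger lower bound with $\min$).

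For the simple-pole characterization at $u = [\bms{s}]$, I apply the same analysis to $s' = \bms{s}$ and demand equality $|P(s')| = |Z(s')| + 1$. This forces: (a) the single permitted exception in $P(s')$ to actually occur, and (b) the map $\phi$ to be surjective onto $Z(s')$. Since $\phi$ cannot hit $C^{(0)}(s') = \bms{s}$, surjectivity forces $\bms{s} \notin Z(s')$, i.e.\ $\bms{s} \notin \nu$; moreover, for each $i \geq 1$ with $C^{(i)}(s') \in \nu$ one must have $B^{(i-1)}(s') \in \nu$. The latter condition amounts, cell by cell, to $\nu$ containing the reflected staircase region $\lambda^R$ between $\mu$ and $s$---each forced containment $B^{(i-1)}(s') \in \nu$ corresponds to one cell of $\lambda^R$.

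The principal obstacle is the last paragraph: packaging the cell-by-cell surjectivity constraint into the single clean condition $\lambda^R \subseteq \nu$. The upper and lower bounds are short consequences of Young-diagram closure combined with the explicit boundary-path description of $r^{(i)}$, but recognising $\lambda^R$ as exactly the shape captured by ``all the $B^{(i-1)}(s')$ whose $C^{(i)}(s')$ land in $\nu$'' requires a careful picture of the flipped $\addset_\mu$-staircase anchored at $\bms{s}$ and a matching of the two staircases cell by cell.
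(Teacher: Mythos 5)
Your argument follows the same underlying route as the paper's: everything is reduced to Lemma~\ref{lem:Torderformula} and to the staircase geometry relating $\addset_\mu$ to $\remset^+_\mu\cup\{(0,0)\}$. The difference is one of execution: you run an exact counting argument (an injection $\phi$ with at most one exceptional element), whereas the paper argues pictorially by adjoining the rectangles $\Rect(t,s)$ one outer corner at a time and reading the net pole/zero count off Figure~\ref{fig:polestructure}. For the two bounds your write-up is in fact \emph{more} complete than the paper's. The identity $r^{(i)}=(a^{(i)}_1,a^{(i-1)}_2)$, the ``at most one exception'' analysis (if $a^{(i^*+1)}_1>s_1$ then every $B^{(j)}$ with $j>i^*$ has negative column, so no second exception can lie in $P(s)$), and the observation that $|Z(s)|=|\addset_\mu|$ forces $s\in\nu$ and hence $B^{(k)}\in P(s)$ are all correct; you even obtain the sharper lower bound with $\min$ in place of $\max$, which implies the stated one.

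The one genuine gap is the one you flag yourself: for the ``furthermore'' part you stop at ``the exception occurs and $\phi$ is surjective onto $Z(\bms{s})$'' without showing this is equivalent to ``$\bms{s}\notin\nu$ and $\lambda\subseteq\nu$.'' This does close inside your framework, by the same two moves you already used. First, $\lambda^R\subseteq\bms{s}-\nu$ unpacks to the finite list of conditions $B^{(i)}(\bms{s})=\bms{s}-a^{(i)}\in\nu$ for every $i$ with $B^{(i)}(\bms{s})\in\setZ_{\geq0}^2$, since $\bms{s}-\Rect(a^{(i)},s)$ is the full rectangle from the origin up to $B^{(i)}(\bms{s})$, and a Young diagram contains such a rectangle iff it contains its outer corner box. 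Second, that list is equivalent to your pair of conditions by downward induction from the largest admissible index $i_{\max}$: the exceptional element is exactly $B^{(i_{\max})}(\bms{s})$, and for $i<i_{\max}$ column-closure of $\nu$ gives $B^{(i+1)}(\bms{s})\in\nu\Rightarrow C^{(i+1)}(\bms{s})\in\nu$ (same column, lower row), whence surjectivity of $\phi$ gives $B^{(i)}(\bms{s})\in\nu$; the converse is the row-closure computation from your upper-bound step. So the missing piece is routine, but as written your proposal fully proves only the bounds and sets up, without finishing, the simple-pole criterion. (Both your write-up and the paper's are silent on the degenerate case where no $a^{(i)}$ lies weakly below and to the left of $\bms{s}$, in which case there is no pole regardless of $\nu$.)
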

\begin{proof}
According to Lemma~\ref{lem:Torderformula}, in order for $T_{\mu\*\nu}(u)$ to have a 
pole $[\bms{s}]$, we must include an outer corner $t\in \addset_\mu$ in $\nu^R \coloneqq \bms{s}-\nu$. 
The smallest shape that $\nu^R$ can be so that it contains $t$ and $\bms{s}$ is the rectangle 
of boxes between $t$ and $s$, denoted $\Rect(t,s)$. 
However, $\Rect(t,s)$ also includes the two (shifted) inner corners $m_1, m_2 \in \remset^+_\mu$ adjacent to $t$, 
as seen in Figure~\ref{fig:rectcontainsmins}.
\begin{figure}[!htb]
\centering
\includegraphics[page=1,scale=0.75]{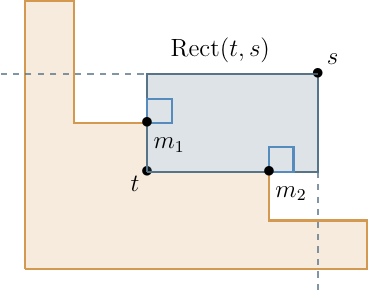}
\caption{
Whenever we include an outer corner $t\in \addset_\mu$ in $\bms{s}-\nu$, we must also 
include the adjacent inner corners $m_1, m_2 \in \remset^+_\mu$.
}
\label{fig:rectcontainsmins}
\end{figure}
Thus according to Lemma~\ref{lem:Torderformula}, 
$T_{\mu\* \Rect(0,s-t)}(u)$ will have a \emph{zero} of order $1$ at $u=[\bms{s}]$. 
We repeat this inclusion of rectangles for each inner corner of $\mu$ that is inside $\Rect(s)$. 
This process yields $\lambda^R \coloneqq \cup_{t \in (\addset_\mu \cap \Rect(s))} \Rect(t,s)$, which is 
precisely the shape between $\mu$ and $s$. It is clear from Figure~\ref{fig:polestructure} that $\lambda^R$
contains one more box from $\addset_\mu$ than from $\remset^+_\mu$, and thus $T_{\mu\* \lambda}(u)$ 
has a pole at $[\bms{s}]$. Thus, if $\nu^R \supset \lambda^R$, we should have a pole of degree $1$. 
However, this pole is annihilated if $\nu^R$ also includes the zero coming from $(0,0) \in \mu$, that is, if $\bms{s} \in \nu$.

\end{proof}
\begin{figure}[!htb]
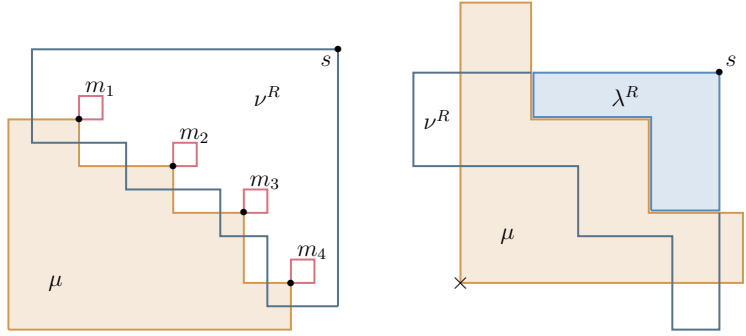

\centering
\includegraphics[scale=0.75,page=2]{minMaxFigs}
\qquad %
\includegraphics[scale=0.75,page=3]{minMaxFigs}
\caption{
On the left, the largest degree zero is obtained when $\nu^R \coloneqq \bms{s}-\nu$ contains 
all of $\remset^+_\mu$, and avoids $\addset_\mu$. 
On the right, a simple pole occurs when $\nu^R$ contains $\lambda^R$ and avoids $(0,0)$.
}
\label{fig:polestructure}
\end{figure}

To describe the corners to the left (or right) of a particular
corner $s=(s_1,s_2)$, we introduce the notation
\[
\defin{\remset_\sigma^{<s}} \coloneqq  \{ t = (t_1,t_2) \in \remset_\sigma : t_1 < s_1 \},
\]
and mutatis mutandis for $\defin{\remset_\sigma^{>s}}$,
$\defin{\addset_\sigma^{<s}}$ and $\defin{\addset_\sigma^{>s}}$.
One can check directly the following reinterpretations
of differences of corners as hook vectors.
\begin{lemma}\label{lem:cornerdiffashooks}
For fixed $s \in \remset_\sigma$, the following hold:

For $t \in \remset_\sigma^{<s}$,
we have $\bps{s} - \bps{t} = \uhv{\sigma \setminus s}{\bmin{s}{t}}$.

For $t \in \addset_\sigma^{<s}$, we have $\bps{s} - t = \uhv{\sigma}{\bmin{s}{t}}$.

For $t \in \remset_\sigma^{>s}$, we have $\bps{s} - \bps{t} = -\lhv{\sigma \setminus s}{\bmin{s}{t}}$.

For $t \in \addset_\sigma^{>s}$, we have $\bps{s} - t = -\lhv{\sigma}{\bmin{s}{t}}$.
\end{lemma}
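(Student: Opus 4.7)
The plan is to verify each of the four identities by a direct computation from the definitions of $\arm$, $\leg$, and the hook vectors, exploiting the fact that the inner corners $\remset_\sigma$ and the outer corners $\addset_\sigma$ are each linearly ordered along the boundary of $\sigma$. In particular, for any two such corners $s$ and $t$ with $t_1 < s_1$, one necessarily has $t_2 > s_2$, so $\bmin{s}{t} = (t_1, s_2)$; symmetrically, if $t_1 > s_1$, then $t_2 < s_2$ and $\bmin{s}{t} = (s_1, t_2)$. This observation pins down the ``corner'' geometry underlying all four cases.

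The representative computation is the first case. Let $s \in \remset_\sigma$ and $t \in \remset_\sigma^{<s}$, and set $b \coloneqq \bmin{s}{t} = (t_1, s_2)$. Because $s$ is an inner corner, row $s_2$ of $\sigma$ consists exactly of the columns $0, \dots, s_1$, so removing $s$ leaves columns $0, \dots, s_1 - 1$; because $t$ is an inner corner, column $t_1$ consists of rows $0, \dots, t_2$, and this is unaffected by deleting $s$. Reading off arm and leg at $b$ inside $\sigma \setminus s$ yields
\begin{equation*}
\uhv{\sigma \setminus s}{b} = \bigl( (s_1 - 1) - t_1 + 1,\, -(t_2 - s_2) \bigr) = (s_1 - t_1,\, s_2 - t_2) = \bps{s} - \bps{t}.
\end{equation*}
The second case, $t \in \addset_\sigma^{<s}$, is the same computation with two small changes: column $t_1$ of $\sigma$ now ends one row earlier (as $t$ is an \emph{outer} corner), and we do not delete $s$. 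This produces exactly the extra $+1$ in each coordinate needed to match $\bps{s} - t = \uhv{\sigma}{b}$.

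The remaining two cases, where $t$ lies to the right of $s$, are handled by the same bookkeeping with the roles of arm and leg swapped: arm is now governed by the column containing $t$, leg by the column containing $s$, and $\bmin{s}{t} = (s_1, t_2)$. Re-expressing the resulting differences as \emph{lower} hook vectors through the shift $\lhv{}{} = \uhv{}{} - (1,1)$ from \eqref{eq:lrhookshift} accounts for the overall minus sign appearing on the right-hand side. The only obstacle is notational bookkeeping --- tracking which of $s$ and $t$ is inner versus outer, whether we work in $\sigma$ or $\sigma \setminus s$, and the shift convention between upper and lower hook vectors --- but once the four cases are paired by the position of $t$ relative to $s$, each reduces to the same one-line piece of arithmetic.
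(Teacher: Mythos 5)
Your proof is correct and is essentially a written-out version of the paper's argument, which is dismissed as ``straightforward'' and delegated entirely to Figure~\ref{fig:cornerdiffashooks}; the direct coordinate check you carry out (pinning down $\bmin{s}{t}$ via the staircase monotonicity and then reading off $\arm$ and $\leg$ in the four configurations) is precisely what the figure encodes. One small imprecision to fix: the blanket claim that $t_1>s_1$ forces $t_2<s_2$ is not strict when $t$ is an \emph{outer} corner --- for instance $\sigma=(3,1)$, $s=(0,1)\in\remset_\sigma$, $t=(1,1)\in\addset_\sigma^{>s}$ has $t_2=s_2$. This degenerate case still satisfies $\bmin{s}{t}=(s_1,t_2)$ and the ensuing arithmetic goes through unchanged (here $b=s$ and $\lh{\sigma}{s}=(0,-1)$), so the conclusion is unaffected, but you should write $t_2\le s_2$ rather than $t_2<s_2$. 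Also, in your sketch of cases three and four, ``arm is now governed by the column containing $t$'' should read the \emph{row} containing $t$: the box $\bmin{s}{t}=(s_1,t_2)$ sits in the row of $t$ and the column of $s$, so arm (horizontal) reads the row of $t$ while leg (vertical) reads the column of $s$.
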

The proof of Lemma~\ref{lem:cornerdiffashooks} is straighforward and
we simply refer to Figure~\ref{fig:cornerdiffashooks}.
\begin{figure}[!htb]
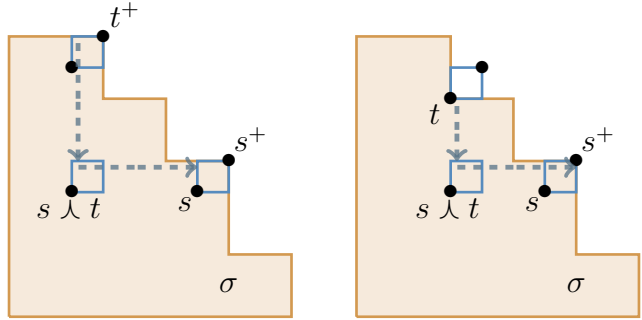

\centering
\includegraphics[scale=1.0,page=3]{tikzFigures}
\qquad
\includegraphics[scale=1.0,page=4]{tikzFigures}
\caption{
The difference between partition corners are hook lengths.
In the first figure, $\bps{s} - \bps{t} = \uhv{\sigma\setminus s}{\bmin{s}{t}}$, while in the second figure we have $\bps{s} - t = \uhv{\sigma}{\bmin{s}{t}}$. The other two cases of Lemma~\ref{lem:cornerdiffashooks} are verified in a similar manner.
}
\label{fig:cornerdiffashooks}
\end{figure}

\begin{lemma}[Expansion Lemma]\label{lemma:expansion}
Let $\sigma$ be a partition, and pick $s \in \remset_\sigma$.
Then $T_{\sigma}(u)$ can be expanded\footnote{Alternatively, one may shift the argument to write the simpler expression,
\begin{equation*}
T_{\sigma}(u+[\bps{s}]) = u (u+[\bps{s}]) 
\cdot
\tfrac{
\prod_{b \in R^s_{\sigma \setminus s}} \left( u+  \uh{\sigma  \setminus s}{b}  \right) }{
\prod_{b \in R^s_\sigma } \left( u+ \uh{\sigma}{b} \right)
}
\cdot
\tfrac{
\prod_{b \in C^s_{\sigma \setminus s}  }\left(u- \lh{\sigma \setminus s}{b} \right) }{
\prod_{b \in C^s_{\sigma}}\left(u-\lh{\sigma}{b} \right)}.
\end{equation*}
} as a row-column product \defin{relative to $s$} as
\begin{equation*}
T_{\sigma}(u) = u \cdot \left(u-[\bps{s}]\right)
\cdot
\tfrac{
\displaystyle\prod_{b \in R^s_{\sigma \setminus s}} \left( u-[\bps{s} - \uhv{\sigma  \setminus s}{b} ] \right) }{
\displaystyle\prod_{b \in R^s_\sigma } \left( u-[\bps{s} - \uhv{\sigma}{b} ] \right)
}
\cdot
\tfrac{
\displaystyle\prod_{b \in C^s_{\sigma \setminus s}  }\left(u-[\bps{s} + \lhv{\sigma \setminus s}{b}] \right) }{
\displaystyle\prod_{b \in C^s_{\sigma}}\left(u-[\bps{s} + \lhv{\sigma}{b}] \right)}.
\end{equation*}

Each of the four factors in the product precisely captures the zeros and poles in each of the four quadrants with a corner in $s$,
see Figure~\ref{figure:expansionlemma}.
\begin{figure}[!htb]
\centering
\includegraphics[scale=1.2,page=2]{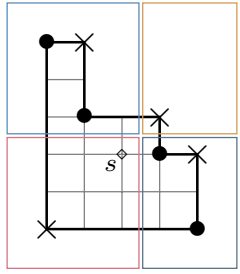}
\caption{The four quadrants of $T_{\mu}(u)$ expanded around $u=[s]$ for $s\in \remset_\mu$.}
\label{figure:expansionlemma}
\end{figure}

\end{lemma}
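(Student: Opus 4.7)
The plan is to start from the compact expression~\eqref{eq:TmuFormula} and reorganise it according to the position of each corner relative to $s$. The factor $u = u - [(0,0)]$ matches the zero of $T_\sigma$ at the origin, and $(u - [\bps{s}])$ extracts the zero coming from $\bps{s} \in \remset^+_\sigma$. The remaining corners split disjointly into an upper-left and a lower-right arm of $s$: the inner corners satisfy $\remset_\sigma \setminus \{s\} = \remset^{<s}_\sigma \sqcup \remset^{>s}_\sigma$ by definition, while the outer corners split as $\addset_\sigma = \{t \in \addset_\sigma : t_1 \leq s_1\} \sqcup \{t \in \addset_\sigma : t_2 \leq s_2\}$. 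The disjointness of the latter follows from $s \in \remset_\sigma$: any outer corner with $t_1 > s_1$ and $t_2 > s_2$ would require a box of $\sigma$ in row $s_2+1$ beyond column $s_1$, contradicting the fact that row $s_2+1$ of $\sigma$ has length at most $s_1$.

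The key step is then to match each arm to one of the two hook-product factors on the right-hand side. By Lemma~\ref{lem:cornerdiffashooks}, every corner in the upper-left arm can be rewritten as $\bps{s}$ minus an upper hook vector of $\bmin{s}{t}$. Parameterising $b = (b_1, s_2) \in R^s_\sigma$ and denoting the height of column $b_1$ in $\sigma$ by $h_\sigma(b_1)$, a direct computation with $\arm$ and $\leg$ yields
\[
\bps{s} - \uhv{\sigma}{b} = (b_1,\, h_\sigma(b_1))
\quad\text{and}\quad
\bps{s} - \uhv{\sigma\setminus s}{b} = (b_1+1,\, h_\sigma(b_1)).
\]
So the row ratio on the right-hand side telescopes: for every pair of consecutive columns with $h_\sigma(b_1) = h_\sigma(b_1+1)$, a factor in the numerator cancels against a factor in the denominator, and the surviving factors biject via $t \mapsto \bmin{s}{t}$ with the upper-left arm of corners (inner corners in the numerator, outer corners in the denominator). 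The column factor is handled identically using lower hook vectors and the row-width profile of $\sigma$ in place of $h_\sigma$. Reassembling the four pieces together with $u$ and $u - [\bps{s}]$ reproduces \eqref{eq:TmuFormula}.

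The hard part will be the bookkeeping at the ``hinge'' box $b = s$, which lies in $R^s_\sigma \cap C^s_\sigma$ but in neither $R^s_{\sigma \setminus s}$ nor $C^s_{\sigma \setminus s}$. It contributes factors $(u - [(s_1, s_2+1)])$ and $(u - [(s_1+1, s_2)])$ to the row and column denominators respectively, and these represent genuine poles of $T_\sigma$ precisely when $(s_1, s_2+1)$, respectively $(s_1+1, s_2)$, is an outer corner of $\sigma$; otherwise they cancel against neighbouring numerator factors. Formulating the corner/row-box and corner/column-box bijections so that both scenarios fall out of the same telescoping argument is the most delicate point, but once established the whole identity follows by direct multiplication.
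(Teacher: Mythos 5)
Your proposal is correct and follows essentially the same route as the paper: both start from \eqref{eq:TmuFormula}, split the corners of $\sigma$ by their position relative to $s$, convert corner differences to hook vectors via Lemma~\ref{lem:cornerdiffashooks}, and identify the corner products with full row/column products through the telescoping cancellation $\uhv{\sigma\setminus s}{b} = \uhv{\sigma}{b+(1,0)}$ (your condition $h_\sigma(b_1)=h_\sigma(b_1+1)$ is exactly the paper's ``$b$ does not share a column with an inner corner''). The only difference is cosmetic -- you telescope from the box products down to the corner products, while the paper pads the corner products up to the box products -- and your explicit coordinates $(b_1,h_\sigma(b_1))$ and the hinge analysis at $b=s$ check out.
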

\begin{proof}
Recall from \eqref{eq:TmuFormula} that
\begin{align*}
T_{\sigma}(u) &= u\frac{\prod_{t \in \remset_\sigma }(u-[\bps{t}])}{\prod_{t
 \in\addset_\sigma }(u-[t ])}  \\
 &= u\frac{\prod_{t \in  \remset_\sigma } (u-[\bps{s} - (\bps{s}-\bps{t})])}{\prod_{t
 \in\addset_\sigma }(u-[\bps{s} - (\bps{s}-t)])}.
\end{align*}
Using Lemma~\ref{lem:cornerdiffashooks}, we split this according to $\remset_\sigma = \remset_\sigma^{<s} \cup \{ s\}\cup \remset_\sigma^{>s}$, as
 \[
 =
 u(u-[\bps{s}])
 \frac{
 \prod_{t \in \remset_\sigma^{<s}}
 \left(u-[\bps{s} - \uhv{\sigma \setminus s}{ \bmin{s}{t}  }]\right)
 }{
 \prod_{t \in \addset_\sigma^{<s}}
 \left(u-[\bps{s}- \uhv{\sigma}{  \bmin{s}{t} }]\right)
 }
 \frac{
 \prod_{t \in \remset_\sigma^{>s}}
 \left(u-[\bps{s} + \lhv{\sigma \setminus s}{ \bmin{s}{t} }]\right)
 }{
 \prod_{t \in \addset_\sigma^{>s}}
 \left(u-[\bps{s}+ \lhv{\sigma}{ \bmin{s}{t} } ]\right)
 }.
\]
Note that in the first fraction, $\bmin{s}{t}$
is always a box in the same row as $s$,
while in the second fraction, $\bmin{s}{t}$
is a box in the same column as $s$.
Next, we claim the that following holds for the first ratio of products:
\begin{equation}\label{eq:cornersumtorowsum}
\frac{
\prod_{t \in \remset_\sigma^{<s}}
(u-\uh{\sigma \setminus t}{\bmin{s}{t} })}
{
\prod_{t \in \addset_\sigma^{<s}}
(u-\uh{\sigma}{\bmin{s}{t} })} = \frac{\prod_{b \in R^s_{\sigma \setminus s}  }(u-\uh{\sigma \setminus s}{b})}{
\prod_{b \in R^s_\sigma  }(u-\uh{\sigma}{b})} .
\end{equation}
That is, the ratio of products which only receives contributions from
corners can be extended to include all boxes in the row of $s$.
This holds because for two adjacent boxes $b, b+(1,0)$ in $R_\sigma^s$, with $b$ not in the same column as an inner corner, we have
\[
\uhv{\sigma\setminus s}{b} =  \uhv{\sigma}{b+(1,0)}.
\]
Thus there is pairwise cancellation between pairs of boxes in the right
hand side of \eqref{eq:cornersumtorowsum},
unless $b$ shares a column with an inner corner, in which case $b+(1,0)$ shares a column with the adjacent outer corner, see Figure~\ref{figure:adjacentboxes}.
\begin{figure}[!htb]
\centering
\includegraphics[page=1,scale=0.75]{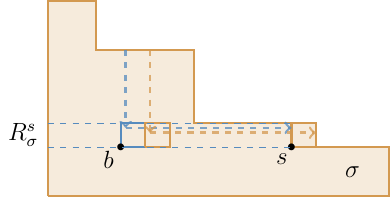}
\hspace{5mm}
\includegraphics[page=2,scale=0.75]{hookSame}
\caption{Relation between $\uhv{\sigma\setminus s}{b}$ and $\uhv{\sigma}{b+(1,0)}$.
In the first diagram, we have that $\uhv{\sigma\setminus s}{b} = \uhv{\sigma}{b + (1,0)}$,
but this does not hold in the second diagram as $b$ shares a column with an inner corner.
}
\label{figure:adjacentboxes}
\end{figure}

We find a similar result for the left hand ratio of products,
this time with products over columns:
\begin{equation*}\label{eq:cornersumtocolsum}
\frac{
\prod_{t \in \remset_\sigma^{>s}}
(u-\lh{\sigma \setminus t}{\bmin{s}{t} })}
{
\prod_{t \in \addset_\sigma^{>s}}
(u-\lh{\sigma}{\bmin{s}{t} })} = \frac{\prod_{b \in C^s_{\sigma \setminus s}  }(u-\lh{\sigma \setminus s}{b})}{
\prod_{b \in C^s_\sigma  }(u-\lh{\sigma}{b})}.
\end{equation*}
Piecing these two results together this becomes
 \[
 = u(u-[\bps{s}]) \frac{\prod_{b \in R^s_{\sigma \setminus s}  }(u-[\bps{s} - \uhv{\sigma \setminus s}{b}])}{\prod_{b \in R^s_\sigma  }(u-[\bps{s} - \uhv{\sigma}{b}])}\frac{\prod_{b \in C^s_{\sigma \setminus s}  }(u-[\bps{s} + \lhv{\sigma \setminus s }{b}])}{\prod_{b \in C^s_\sigma  }(u-[\bps{s}+ \lhv{\sigma}{b}])}.
 \]
\end{proof}

\section{Rectangular Case}

Let $c_{\mu\nu}^{\lambda}$  denote the Schur Littlewood--Richardson coefficients. One can easily show the following result.

\begin{lemma}
If $\lambda=m^n$ is a rectangular partition, then
the (Schur) Littlewood--Richardson coefficient
satisfies $c_{\mu\nu}^{\lambda} \in \{0,1\}$,
and takes the value $1$ if and only $\mu \subset m^n$ and $\nu = \bmu$.
\end{lemma}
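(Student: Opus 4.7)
The plan is to use the Littlewood--Richardson rule, which expresses $c_{\mu\nu}^{m^n}$ as the number of semi-standard skew Young tableaux of shape $m^n/\mu$ with content $\nu$ whose reverse reading word is a lattice word. Any nonzero $c_{\mu\nu}^{m^n}$ forces $\mu\subseteq m^n$ and $|\nu|=mn-|\mu|=|\bmu|$, so I restrict to this setting; the degenerate case $\mu=m^n$ is trivial.

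Next I would exhibit a single LR tableau $T_0$ and show it is the only one. Define $T_0$ on $m^n/\mu$ by assigning to cell $(i,j)$ (row $i$ from the top, English convention) the value $i-\mu'_j$, where $\mu'$ is the conjugate partition. Weakly decreasing $\mu'_j$ in $j$ makes rows weakly increasing, and strict column-increase is automatic. For each $k$, the set of cells of $T_0$ with value $\leq k$, together with $\mu$ itself, forms the Young diagram with column lengths $\min(n,\mu'_j+k)$; since this is again a partition, $T_0$ is Yamanouchi, i.e.\ its reverse reading word is a ballot sequence. Counting the cells with $i-\mu'_j=k$ yields $m-\mu_{n-k+1}=\bmu_k$, so $T_0$ has content $\bmu$.

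For uniqueness I would induct on rows from the top of $m^n/\mu$. In the topmost nonempty row the first letter of the reverse reading word must be $1$ by the ballot condition, and row-weak-increase then forces every entry there to be $1$. Assuming rows above row $i$ already agree with $T_0$, strict column-increase gives $T(i,j)\geq i-\mu'_j$ in every cell; any strict inequality would either break row-weak-increase or create an excess of a larger label over its predecessor in the reverse reading word, violating the ballot inequality. Hence $T=T_0$, and therefore $c_{\mu\nu}^{m^n}=\delta_{\nu,\bmu}$. The main obstacle is keeping the inductive bookkeeping tidy; as a clean alternative, I could invoke the jeu-de-taquin identity that $c_{\mu\nu}^\lambda$ equals the number of SYT of shape $\lambda/\mu$ rectifying to a fixed SYT of shape $\nu$, together with the facts that the rectification of $m^n/\mu$ is the straight shape $\bmu$ and that $180^\circ$ rotation bijects SYT of shape $m^n/\mu$ with SYT of shape $\bmu$.
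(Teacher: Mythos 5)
The paper presents this lemma without proof (it is prefaced by ``One can easily show''), so I am assessing your argument on its own. Your outline---construct $T_0(i,j)=i-\mu'_j$ and show it is the unique Littlewood--Richardson filling of $m^n/\mu$, with content $\bmu$---is the right one and the conclusion is correct, but the step certifying $T_0$ as Yamanouchi is wrong as stated. The condition you invoke, that for every $k$ the union of $\mu$ with the cells of value $\le k$ is a partition, is necessary for the reverse reading word to be a ballot sequence but it is not sufficient. For instance, on the skew shape $(3,3)/(1)$ the semistandard filling with first row $1,2$ and second row $1,2,3$ gives the chain $(1)\subset(2,1)\subset(3,2)\subset(3,3)$ of partitions with horizontal-strip steps, yet its reverse reading word is $2,1,3,2,1$, which begins with a $2$ and is not a ballot sequence. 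For $T_0$ the lattice property therefore has to be checked directly; this does work (reading each row right to left the entries of $T_0$ weakly decrease, and a short count shows that immediately after reading the last $(k{+}1)$ of row $i$ one has $\#k=\#(k{+}1)=m-\mu_{i-k}$, so the ballot inequality holds with equality at each critical moment), but it is a separate computation, not a consequence of the partition-at-each-level observation. The uniqueness induction has the same flavour: the claim that a strict inequality $T(i,j)>i-\mu'_j$ ``would violate the ballot inequality'' is asserted, and making it precise again requires this kind of prefix counting.

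The cleanest route is the rotation idea you mention, but carried out with semistandard rather than standard tableaux: rotating $m^n/\mu$ by $180^\circ$ and replacing each entry $a$ by $N+1-a$ (entries drawn from $\{1,\dots,N\}$) is a content-reversing bijection from SSYT of shape $m^n/\mu$ to SSYT of the straight shape $\bmu$, and since Schur polynomials are symmetric this gives $s_{m^n/\mu}=s_{\bmu}$. Comparing with $s_{m^n/\mu}=\sum_{\nu}c_{\mu\nu}^{m^n}s_\nu$ and using linear independence of the Schur functions yields $c_{\mu\nu}^{m^n}=\delta_{\nu,\bmu}$ at once, with no tableau to construct and no ballot condition to verify. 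Your jeu-de-taquin phrasing can also be made to work, but the assertion that ``the rectification of $m^n/\mu$ is the straight shape $\bmu$'' is not a priori well posed (the rectification shape of a skew tableau depends on the filling); the symmetric-function version sidesteps this.
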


Among all integer partitions $\lambda \vdash mn$,
the rectangular partition $m^n$ is the unique partition
containing the box with coordinate $\bms{v}=(m-1,n-1)$.
This simple fact allows us to extract the
rectangular Jack Littlewood--Richardson coefficients
from the formula \eqref{eq:sumproductident}.

\begin{corollary}\label{corr:rectangularcase}
Let  $\mu \subseteq m^n$. Then
\begin{equation*}
\langle \jackj_{\mu}\jackj_{\bmu},\jackj_{m^n} \rangle =
\left( \Res_{u=[\bms{v}]}  T_{\mu\*\bmu}(u) \right) \cdot
\norm{ \jackj_{m^n} }^2 \cdot \frac{ \jacktop_\mu \jacktop_{\bmu}} {\jacktop_{m^n}}.
\end{equation*}
\end{corollary}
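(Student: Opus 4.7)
The plan is to extract the single Jack Littlewood--Richardson coefficient $\jackjLR_{\mu,\bmu}^{m^n}$ from the sum-product identity \eqref{eq:sumproductident} by taking a residue at $u = [\bms{v}]$, where $\bms{v} = (m-1, n-1)$ is the top-right corner box of $m^n$.

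First, I would specialise \eqref{eq:sumproductident} to $\nu = \bmu$, so that $|\mu|+|\bmu|=mn$, and then take $\Res_{u=[\bms{v}]}$ of both sides. On the right, the constant $-1$ is holomorphic, so the residue is simply $\Res_{u=[\bms{v}]} T_{\mu\*\bmu}(u)$. On the left, each $\gamma$-term is a sum of simple poles at the points $\{[s] : s \in \gamma \setminus (\mu \cup \bmu)\}$; since the evaluation $(x,y) \mapsto [x,y] = \alpha x - y$ is injective on $\setZ^2$ (because $\alpha$ is an indeterminate), only the terms with $\bms{v} \in \gamma \setminus (\mu \cup \bmu)$ can contribute to the residue.

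The key reduction then comes from the lemma stated just before the corollary: $\gamma = m^n$ is the unique partition of $mn$ containing $\bms{v}$, so at most one $\gamma$-term survives. For any proper sub-partition $\mu \subsetneq m^n$ the corner $\bms{v}$ cannot lie in $\mu$, and by the same reasoning applied to $\bmu$, not in $\bmu$ either; hence $\bms{v} \in m^n \setminus (\mu \cup \bmu)$ automatically. The LHS residue therefore collapses to the single term $\hat{\jackjLR}_{\mu,\bmu}^{m^n}$, and equating residues gives
\[
\hat{\jackjLR}_{\mu,\bmu}^{m^n} \;=\; \Res_{u=[\bms{v}]} T_{\mu\*\bmu}(u).
\]
Unpacking the definition of $\hat{\jackjLR}$ and substituting $\langle \jackj_\mu \jackj_{\bmu}, \jackj_{m^n}\rangle = \jackjLR_{\mu,\bmu}^{m^n}\norm{\jackj_{m^n}}^2$ then rearranges to the stated formula. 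The only step requiring any care is verifying that exactly one pole on the LHS survives the residue, which is essentially the content of the uniqueness lemma; the degenerate case $\mu = m^n$ (where $\bmu = \emptyset$) must be excluded or handled separately, since both sides then simplify trivially.
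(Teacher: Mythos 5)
Your proposal is correct and reconstructs exactly the argument the paper intends (the paper states the corollary without proof, after the preceding uniqueness lemma and the remark that it ``allows us to extract'' the coefficient from \eqref{eq:sumproductident}): specialise to $\nu=\bmu$, take $\Res_{u=[\bms{v}]}$ of both sides, use injectivity of $s\mapsto[s]$ on $\setZ^2$ and the uniqueness of $m^n$ among partitions of $mn$ containing $\bms{v}$ to isolate the single term $\hat{\jackjLR}_{\mu,\bmu}^{m^n}$, and then unwind the definitions of $\hat{\jackjLR}$ and of the Stanley structure coefficient. One small imprecision at the end: for $\mu\in\{\emptyset,m^n\}$ the residue vanishes ($T_{\mu\*\bmu}\equiv 1$) while $\langle\jackj_\mu\jackj_{\bmu},\jackj_{m^n}\rangle=\norm{\jackj_{m^n}}^2\neq 0$, so those boundary cases must genuinely be excluded rather than ``simplifying trivially'' to equality --- but this is also left implicit in the paper's statement and does not affect your main argument.
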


To evaluate the residue in Corollary~\ref{corr:rectangularcase},
we will need to build up a sequence of structural results about $T_{\mu\*\bmu}(u)$.
We work always in reference to the fixed rectangle $m^n$. 
For any box $b$, we define the complementary box $\defin{\bb} \coloneqq \bms{v} - b$. For every $\sigma \subset m^n$, the map $t \to \bar t$, maps $\remset_\sigma$ into $\addset_{\bsigma}$, and the image misses only a single outer corner of $\bsigma$ at the edge.

\begin{lemma}\label{mirrorrules}
Let $\sigma \subset m^n$ and recall that $\bsigma$
represents the complementary partition of $\sigma$
inside the rectangle.
Fix the inner corner $t \in \remset_\sigma$ and let
$\bt \in \addset_{\bsigma}$ be the
corresponding outer corner of $\bar{\sigma}$.
\begin{itemize}
 \item
For $s \in \addset_{\sigma}$,
we have $\eh{\sigma}{\bmin{s}{t}} = \eh{\bsigma}{ \bmin{\bs}{\bt} }$;
\item
for $w \in \remset_\sigma$,
we have $\eh{\sigma\setminus t}{ \bmin{w}{t} } = \eh{\bsigma\cup\bt}{ \bmin{\bw}{\bt} }$,
\end{itemize}
where $\defin{\eh{\sigma}{b}}$ is either $\uh{\sigma}{b}$ or $\lh{\sigma}{b}$.
See Figure~\ref{fig:hookcomplement}.
\end{lemma}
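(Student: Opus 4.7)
The lemma is a direct geometric consequence of the $180$-degree rotation $b \mapsto \bar b = \bms{v} - b$ of the rectangle $m^n$, under which $\sigma$ and its complement $\bsigma$ are exchanged: inner corners of $\sigma$ correspond (apart from a single boundary outer corner of $\bsigma$ on the rectangle's edge that has no counterpart) to outer corners of $\bsigma$, and vice versa. A coordinate check gives $\bmin{\bs}{\bt} = \overline{\bmax{s}{t}}$.

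For the first bullet, I would fix coordinates $t=(t_1,t_2) \in \remset_\sigma$, which forces row $t_2$ of $\sigma$ to contain exactly $t_1+1$ boxes and column $t_1$ to contain exactly $t_2+1$ boxes; dually, row $n-1-t_2$ of $\bsigma$ contains $m-t_1-1$ boxes and column $m-1-t_1$ contains $n-t_2-1$ boxes. For $s=(s_1,s_2) \in \addset_\sigma$, the pair $(s,t)$ must sit in one of two staircase configurations, either $s_1<t_1, s_2>t_2$ (giving $\bmin{s}{t}=(s_1,t_2)$) or $s_1>t_1, s_2<t_2$ (giving $\bmin{s}{t}=(t_1,s_2)$). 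In each configuration, the row/column lengths determined by $s$ and $t$ let me read off $\arm_\sigma(\bmin{s}{t})$ and $\leg_\sigma(\bmin{s}{t})$ directly in terms of $s_i,t_i$, and a parallel computation on the complement side gives $\arm_{\bsigma}(\bmin{\bs}{\bt})$ and $\leg_{\bsigma}(\bmin{\bs}{\bt})$. The calculation shows (without any swap) that arm equals arm and leg equals leg in all cases, so both identities $\uh{\sigma}{\bmin{s}{t}} = \uh{\bsigma}{\bmin{\bs}{\bt}}$ and $\lh{\sigma}{\bmin{s}{t}} = \lh{\bsigma}{\bmin{\bs}{\bt}}$ follow from the definitions, covering both choices of $\eh{}{}$ simultaneously.

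For the second bullet, I would derive it from the first by applying the first bullet to the pair $(\sigma \setminus t, \bsigma \cup \bt)$, which is itself a valid partition/complement pair inside $m^n$. After removing $t$, the box $t$ becomes an outer corner of $\sigma \setminus t$, and any other inner corner $w \in \remset_\sigma$ remains an inner corner of $\sigma \setminus t$: distinct inner corners of $\sigma$ lie in distinct rows and columns, so the defining row-length conditions for $w$ are unaffected by the removal of $t$. Invoking the first bullet on $\sigma \setminus t$ with the roles $s:=t$ and ``inner corner'' $:= w$, and using the symmetry of $\bmin{\cdot}{\cdot}$, yields exactly $\eh{\sigma \setminus t}{\bmin{w}{t}} = \eh{\bsigma \cup \bt}{\bmin{\bw}{\bt}}$.

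The main obstacle is purely bookkeeping --- carefully tracking row and column lengths in $\sigma$ and $\bsigma$ across the two staircase configurations of the first bullet, and checking the technical claim that $w$ survives as an inner corner when $t$ is removed. Conceptually the result is forced by the rotation, and no structural difficulty is hidden behind the case analysis.
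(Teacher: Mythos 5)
Your approach is the same as the paper's: the paper gives no written proof of this lemma at all, only a pointer to Figure~\ref{fig:hookcomplement}, and your coordinate bookkeeping under the $180$-degree rotation is precisely the verification that figure is meant to convey. Your reduction of the second bullet to the first via the complementary pair $(\sigma\setminus t,\ \bsigma\cup\bt)$, with $t$ reinterpreted as an outer corner of $\sigma\setminus t$, is clean and correct. Two caveats. First, the claim that $(s,t)$ must sit in one of two \emph{strict} staircase configurations is not quite right: an outer corner can share a row or a column with $t$ (e.g.\ $s=(t_1+1,t_2)$ when the row below is long enough), so the correct dichotomy is $s_1\le t_1,\ s_2>t_2$ versus $s_1>t_1,\ s_2\le t_2$; the arm/leg formulas you would derive ($\arm = |t_1-s_1|$, etc.) are uniform across the boundary subcases, so nothing breaks, but the case analysis as written is incomplete. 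Second, and more substantively, the first bullet actually \emph{fails} when $s$ is one of the boundary outer corners $(m,0)$ (when $\sigma_1=m$) or $(0,n)$ (when $\sigma$ has $n$ parts): then $\bs$ lies outside the rectangle, $\bmin{\bs}{\bt}$ is not a box of $\bsigma$, and the legs no longer match (e.g.\ $\sigma=(3,1)\subset 3^2$, $t=(0,1)$, $s=(3,0)$ gives $\uh{\sigma}{(0,0)}=3\alpha+1$ but the mirrored value $3\alpha$). The lemma statement glosses over this, and the paper's proof of Lemma~\ref{lemma:flip} explicitly carves out exactly this exceptional corner via the extra pole factor $(u-[(m,0)-t])$; your write-up should likewise exclude these boundary corners rather than assert that arm and leg agree ``in all cases''.
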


\begin{figure}[!htb]
\centering
\includegraphics[page=4,scale=0.9]{minMaxFigs}
\hfill
\includegraphics[page=5,scale=0.9]{minMaxFigs}
\caption{
The hook complement for boxes to the left of $t$ and to the right of $t$, respectively.
In the left figure, we see for example that
$\uh{\sigma\setminus t}{\bmin{w}{t}} = \uh{\bsigma \cup \bt}{\bmin{\bw}{\bt}}$ and in the right figure we have
for example $\uh{\sigma}{\bmin{s}{t}} = \uh{\bsigma}{\bmin{\bs}{\bt}}$.
}
\label{fig:hookcomplement}
\end{figure}
The next lemma conveys how we can `flip' row/column box
products in $\sigma$ to column/row box products in $\bsigma$.
\begin{lemma}[Flip rules]\label{lemma:flip}
For $t \in \remset_\sigma$, we have
\begin{align}\label{eq:fliprules}
 \frac{\prod_{b \in C^t_{\sigma \setminus t}  }(u-\uh{\sigma \setminus t}{b})}{
\prod_{b \in C^t_\sigma  }(u-\uh{\sigma}{b})} &= \frac{1}{(u-[(m,0)-t])}
\frac
{\prod_{b \in R^{\bt}_{\bsigma}  }(u- \uh{\bsigma \cup \bt}{b})}
{\prod_{b \in R^{\bt}_{\bsigma} }(u-\uh{\bsigma}{b} )}, \\
\frac{
\prod_{b \in R^t_{\sigma \setminus  t}  }(u- \lh{\sigma \setminus  t}{b}   )}{
\prod_{b \in R^t_\sigma  }(u-\lh{\sigma}{b} )} &= \frac{1}{(u-[t-(0,n)]) }
\frac
{\prod_{b \in C^{\bt}_{\bsigma} }(u- \lh{\bsigma \cup \bt}{b} )}
{\prod_{b \in C^{\bt}_{\bsigma} }(u- \lh{\bsigma}{b} )}.
\end{align}
\end{lemma}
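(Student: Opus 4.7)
The plan is to reduce both sides of the first identity (the upper-hook flip rule) to a common rational function by telescoping the column product on the LHS and the row product on the RHS, then invoking the mirror rules of Lemma~\ref{mirrorrules} to identify the surviving corner factors. The second identity will follow by an entirely analogous argument with upper hooks replaced by lower hooks and rows and columns interchanged, so I focus on the first.

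My first step is to telescope the LHS. For consecutive boxes $(t_1,j), (t_1,j+1) \in C^t_\sigma$ a short calculation gives $\uh{\sigma\setminus t}{(t_1,j)} = \uh{\sigma}{(t_1,j)} - 1$, and $\uh{\sigma}{(t_1,j+1)} = \uh{\sigma\setminus t}{(t_1,j)}$ precisely when rows $j$ and $j+1$ of $\sigma$ have equal length (i.e.\ no inner corner of $\sigma$ at row $j$ with column $\geq t_1$). Enumerating the inner corners of $\sigma$ in $\remset_\sigma^{>t}\cup\{t\}$ as $(c_1,r_1),\dots,(c_k,r_k)=t$ with $c_1>\dots>c_k=t_1$ and $r_1<\dots<r_k=t_2$, the pairwise cancellations reduce the LHS to
\[
\frac{\prod_{i=1}^{k-1}\bigl(u-\uh{\sigma\setminus t}{(t_1,r_i)}\bigr)}{\bigl(u-\uh{\sigma}{(t_1,0)}\bigr)\prod_{i=1}^{k-1}\bigl(u-\uh{\sigma}{(t_1,r_i+1)}\bigr)}.
\]

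Next I apply Lemma~\ref{mirrorrules} to translate each surviving hook into a hook in $\bsigma$ or $\bsigma\cup\bt$. The second bullet applied to $w=(c_i,r_i)$ gives $\uh{\sigma\setminus t}{(t_1,r_i)} = \uh{\bsigma\cup\bt}{(m-1-c_i,\bt_2)}$, and the first bullet applied to the interlacing outer corner $s=(c_{i+1}+1,r_i+1)\in\addset_\sigma^{>t}$ gives $\uh{\sigma}{(t_1,r_i+1)} = \uh{\bsigma}{(m-2-c_{i+1},\bt_2)}$; both land in the row of $\bt$ in $\bsigma\cup\bt$ or $\bsigma$, as needed for matching with the RHS. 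The boundary denominator factor is handled analogously via the outer corner $(c_1+1,0)$ whenever $c_1<m-1$. I would then telescope the RHS symmetrically: adjacent factors $(u-\uh{\bsigma\cup\bt}{(x+1,\bt_2)})$ and $(u-\uh{\bsigma}{(x,\bt_2)})$ cancel precisely when columns $x$ and $x+1$ of $\bsigma$ have the same length above row $\bt_2$, and the surviving positions coincide with the mirror images of the LHS corners, so after multiplying by the extra factor the two reduced products agree term by term.

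The main obstacle is the boundary case $c_1=m-1$ (equivalently $\sigma_1=m$): here the outer corner $(m,0)$ of $\sigma$ has no mirror image inside $\bsigma$ — it is precisely the single outer corner of $\bsigma$ missed by the map $t\mapsto\bt$ noted just before Lemma~\ref{mirrorrules} — and in this case a direct computation from the definitions shows $\uh{\sigma}{(t_1,0)} = [(m,0)-t]$. The extra factor $1/(u-[(m,0)-t])$ on the RHS is exactly what is needed to bridge this boundary mismatch and produce the equality.
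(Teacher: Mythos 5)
Your strategy is the same as the paper's: collapse the column product on the left to corner factors, convert each corner factor by the mirror rules of Lemma~\ref{mirrorrules}, and re-expand on the right to a row product, with the discrepancy concentrated at the boundary of the rectangle. Your telescoping of the LHS and your identification of the surviving corner positions, including the computation $\uhv{\sigma}{(t_1,0)}=(m,0)-t$ when $\sigma_1=m$, all agree with the paper.

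However, there is a genuine gap in your boundary analysis. The factor $1/(u-[(m,0)-t])$ appears in the statement \emph{unconditionally}, so it must be accounted for in both sub-cases, and you only do so when $c_1=m-1$ (i.e.\ $(m,0)\in\addset_\sigma$). In the complementary, generic case $\sigma_1<m$, you assert that the bottom outer corner $(c_1+1,0)$ mirrors normally and that ``the surviving positions coincide with the mirror images of the LHS corners''; if that were literally true, the two reduced fractions would already be equal and multiplying the right side by the extra pole would \emph{destroy} the identity rather than produce it. What actually happens (and what the paper proves) is that when $\sigma_1<m$ the outer corner $(0,n)\in\addset_{\bsigma}^{<\bt}$ has no preimage under the mirror map $s\mapsto\bs$, so the telescoped RHS numerator retains one unmatched factor at the leftmost box $(0,\bt_2)$ of $R^{\bt}_{\bsigma}$, with
\[
\uhv{\bsigma\cup\bt}{(0,\bt_2)}=\bps{\bt}-(0,n)=(m,0)-t,
\]
and it is precisely this extra zero that cancels the explicit pole factor. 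A simple count confirms the mismatch you are glossing over: in corner form the LHS has $k-1$ zeros and $k$ poles, while the telescoped row-product fraction on the RHS always has equally many surviving zeros and poles; when all $k$ LHS poles mirror into RHS denominator factors, the RHS numerator must contain one zero with no LHS counterpart. You need to identify that zero and evaluate it as above to close the argument; the rest of your proposal is sound.
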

\begin{proof}
From the proof of the expansion Lemma~\ref{lemma:expansion}, we know that the ratio of column products can be expressed as
\begin{equation}\label{eq:preflipped}
 \frac{\prod_{b \in C^t_{\sigma \setminus t}  }(u-\uh{\sigma \setminus t}{b})}{
\prod_{b \in C^t_\sigma  }(u-\uh{\sigma}{b})} = \frac{\prod_{s \in \remset_{\sigma}^{>t}}(u-\uh{\sigma \setminus t}{\bmin{s}{t} })}{
\prod_{s \in \addset_{\sigma}^{>t} }(u-\uh{\sigma}{\bmin{s}{t} })} .
\end{equation}

We now use Lemma~\ref{mirrorrules} to flip each of the terms in this product. All the inner corners to the right of $t$ can be flipped this way, however if $(m,0) \in \addset_\sigma^{>t}$, that is, if $\sigma$ runs to the right edge of $m^n$, then there is no corresponding element in $\remset_{\bsigma}$. We treat this case first, and then consider case when $(m,0) \notin \addset_\sigma^{>t}$. We claim that in both cases the following holds:

\begin{equation}\label{eq:flippoles}
 \frac{\prod_{s \in \remset_{\sigma}^{>t}}(u-\uh{\sigma \setminus t}{\bmin{s}{t}})}{
\prod_{s \in \addset_{\sigma}^{>t} }(u-\uh{\sigma}{\bmin{s}{t}})} = \frac{1}{(u-[(m,0)-t])}\frac
{\prod_{\bs \in \addset_{\bsigma}^{< \bt} }(u- \uh{\bsigma \cup  \bt}{\bmin{\bs}{\bt}})}
{\prod_{ \bs \in \remset_{\bsigma}^{<\bt} }(u-\uh{\bsigma}{\bmin{\bs}{\bt}} )}.
\end{equation}

In the case $(m,0) \in \addset_\sigma^{>t}$ we have contribution in the LHS denominator coming from $\bmin{t}{(m,0)}= (t_1,0) $.
As this term cannot be flipped, it contributes an extra pole on the RHS at
\[
\uhv{\sigma}{(t_1,0)} = (m-t_1,-t_2) = (m,0)-t,
\]
which appears in \eqref{eq:flippoles}.

In the case where $(m,0) \notin \addset_\sigma^{>t}$, we must now have $(0,n) \in \addset_{\bsigma}^{<\bt}$ and that this corner of $\bsigma$ has no corresponding corner in $\sigma$. In the RHS numerator of \eqref{eq:flippoles} we find a contribution from $\bmin{\bt}{(0,n)} = (0,\bt_2)$, for which
\[\uhv{\bsigma+\bt}{(0,\bt_2)} = (\bt_1 +1, -(n-1-\bt_2)) =  \bps{\bt}-(0,n) = (m,0)-t.\]
%Equivalently
%\[ \uhv{\sigma}{\bmin{t}{(m,0)}} = \uhv{\bsigma+\bt}{\bmin{\bt}{(0,n)}}. \]
As this zero in the numerator of the RHS of \eqref{eq:flippoles} does not appear as a flipped hook from the left hand side, this extra zero will have to be cancelled by the same pole factor that appeared in first case. Thus \eqref{eq:flippoles} holds in both cases, and we can then extend the products in the right hand side out over the full row $R_{\bsigma}^{\bt}$ (using a formula similar to equation \eqref{eq:preflipped}) to recover equation \eqref{eq:fliprules}. By a similar technique, we find the other flip rule for row products.
\end{proof}

The extra pole factors entering into the formulas of the
flip Lemma~\ref{lemma:flip} have an alternate interpretation
in terms of hooks in the rectangular partition $m^n$.

\begin{definition}\label{def:vhflips}
Recall that we have a fixed rectangular partition $m^n$ in mind.
For $b=(b_1,b_2)$ we denote $\defin{\hflip{b}} \coloneqq (m-1-b_1,b_2)$
and $\defin{\vflip{b}} \coloneqq (b_1,n-1-b_2)$.
\end{definition}

\begin{lemma}\label{lemma:recthook}
We have that
\[
(m,0) - b = \uhv{m^n}{\vflip{b}}
\quad \text{and} \quad
b - (0,n) = \lhv{m^n}{\hflip{b}}.
\]
\end{lemma}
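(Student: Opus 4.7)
The plan is a direct verification from the definitions of the hook vectors and the flip operations. There is no genuine obstacle here; the lemma simply records two convenient reformulations, and the proof is a two-line unpacking in each case.

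For the first identity, recall that for any $s=(x,y)$ in a diagram $\lambda$ we set $\uhv{\lambda}{s} = (\arm(s)+1,-\leg(s))$. Take $s = \vflip{b} = (b_1, n-1-b_2)$ inside the rectangle $m^n$. Since every row has length $m$ and every column has length $n$, we compute $\arm_{m^n}(s) = m - 1 - b_1$ and $\leg_{m^n}(s) = (n-1) - (n-1-b_2) = b_2$. Therefore
\[
\uhv{m^n}{\vflip{b}} = (m - b_1,\, -b_2) = (m,0) - b,
\]
as claimed.

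For the second identity, use $\lhv{\lambda}{s} = (\arm(s), -\leg(s)-1)$ at $s = \hflip{b} = (m-1-b_1, b_2)$. Again working in $m^n$, $\arm(s) = (m-1) - (m-1-b_1) = b_1$ and $\leg(s) = n - 1 - b_2$, so
\[
\lhv{m^n}{\hflip{b}} = (b_1,\, -(n-1-b_2)-1) = (b_1,\, b_2 - n) = b - (0,n).
\]
This completes the verification; in the write-up I would probably also pair this with the picture of the rectangle with the two flipped boxes to make the geometric content transparent, but no further ideas are required.
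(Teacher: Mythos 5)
Your verification is correct and is exactly the algebraic unpacking of the picture that the paper cites for its proof (``See Figure~\ref{fig:boxhooks}''); in both cases one simply reads off $\arm$ and $\leg$ of the flipped box inside the rectangle $m^n$. Nothing further is needed.
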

\begin{proof}See Figure~\ref{fig:boxhooks}.\end{proof}

\begin{figure}[!htb]
\centering
\includegraphics[scale=1,page=1]{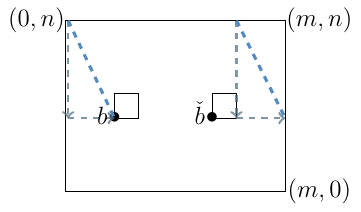}
\hspace{0.1cm}
\includegraphics[scale=1,page=2]{bhat}
\caption{One can easily see that $(m,0) - b = \uhv{m^n}{\vflip{b}}$
and $b - (0,n) = \lhv{m^n}{\hflip{b}}$.
}\label{fig:boxhooks}
\end{figure}

We now arrive at the main structural result, which expresses the rational
function $T_{\mu\*\bmu}(u)$ as a product involving hook lengths.

\begin{theorem}
The function $T_{\mu\*\bmu}(u)$ is given by the following expression\footnote{As before, one may shift the argument to write a simplified expression,
\begin{equation*}\label{mumudecompalt}
T_{\mu\*\bmu}(u+[\bms{v}]) = \prod_{b \in \mu} \frac{u+[\bb]}{u+[b] }
\prod_{b \in \bmu} \frac{u+ \lh{\bmu}{b} }{u+\lh{m^n}{{\vflip{ b}}}}
\prod_{b \in \mu} \frac{u-\uh{\mu}{ b}}{u-\uh{m^n}{\hflip{b}}}.
\end{equation*}
},
\begin{equation}\label{eq:mumudecomp}
T_{\mu\*\bmu}(u) = \prod_{b \in \mu} \frac{u-[b]}{u-[\bb] }
\prod_{b \in \bmu} \frac{u-[\bms{v} - \lhv{\bmu}{b} ]}{u-[\bms{v}-\lhv{m^n}{{\vflip{ b}}}]}
\prod_{b \in \mu} \frac{u-[\bms{v}+\uhv{\mu}{ b}]}{u-[\bms{v}+\uhv{m^n}{\hflip{b}}]},
\end{equation}
Furthermore, each of the three products in the expression \eqref{eq:mumudecomp} is the component of the LHS explicitly decomposed with respect to quadrants around $\bms{v}$, as in Fig.~\ref{threequadrants}. Thus, each of these factors, as a function of a partition $\mu \subset m^n$, is manifestly invariant under $\mu \to \bmu$.
\end{theorem}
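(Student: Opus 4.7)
The plan is to prove \eqref{eq:mumudecomp} by identifying the zeros and poles of $T_{\mu\*\bmu}(u)$ quadrant by quadrant around $\bms{v}$, matching each group to one of the three factors on the right-hand side. Starting from $T_{\mu\*\bmu}(u) = \prod_{c\in\bmu}T_\mu(u-[c])$ and Lemma~\ref{lem:Torderformula}, every candidate pole/zero is located at a position of the form $c + \alpha$ with $c \in \bmu$ and $\alpha \in \addset_\mu \cup \remset^+_\mu \cup \{(0,0)\}$. Relative to $\bms{v}$, these positions fall into four regions: the interior of $m^n$, the strip above $m^n$, the strip to the right of $m^n$, and an upper-right quadrant which I would first show contributes nothing. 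This upper-right vanishing follows from the partition property (if $b \in \mu$ and $b-k$ has non-negative coordinates, then $b-k \in \mu$), which makes all four counts $f(s), f(s-(1,0)), f(s-(0,1)), f(s-(1,1))$ vanish simultaneously for $s$ sufficiently far beyond the rectangle, with an auxiliary check at the quadrant boundaries.

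For the interior quadrant, the net multiplicity at $[s]$ collapses via Lemma~\ref{lem:Torderformula} to ${\bf 1}[s\in\mu] - {\bf 1}[\bms{v}-s\in\mu]$, which matches exactly the zero/pole structure of Factor 1 at $[s]$; the $\mu\leftrightarrow\bmu$ invariance of this factor is then immediate from the defining equivalence $b\in\bmu \Leftrightarrow \bms{v}-b\in m^n\setminus \mu$. For the upper and right quadrants, I would apply the Expansion Lemma (Lemma~\ref{lemma:expansion}) to each $T_\mu(u-[c])$ to extract the relevant row and column products, use Lemma~\ref{lem:cornerdiffashooks} to convert corner differences into hook vectors of $\mu$, and then invoke the Flip Lemma (Lemma~\ref{lemma:flip}) to rewrite these as hook products in $\bmu$ (for the upper quadrant) and dually for the right quadrant. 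The ``extra pole factor'' appearing in the Flip Lemma is reinterpreted via Lemma~\ref{lemma:recthook} as the $m^n$-hook at $\vflip{b}$ or $\hflip{b}$ found in the denominators of Factors 2 and 3. The individual $\mu \leftrightarrow \bmu$ invariance of Factors 2 and 3 then follows from the Mirror Rules (Lemma~\ref{mirrorrules}), which identify the hook value of $\sigma$ at $b$ with that of $\bsigma$ at $\bb$.

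The main obstacle will be the quadrant bookkeeping: tracking precisely which positions $c + \alpha$ fall in which region, and showing that the contributions assemble cleanly into the three factors on the right-hand side. In particular, the two boundary cases in the Flip Lemma (whether $(m,0)\in\addset_\sigma^{>t}$ or $(0,n)\in\addset_{\bsigma}^{<\bt}$) must be handled uniformly so that the same rectangular hook factor appears in both situations. Once this bookkeeping is carried out the three-factor decomposition drops out, and each factor's invariance under $\mu \leftrightarrow \bmu$ is a direct corollary of the corresponding structural lemma.
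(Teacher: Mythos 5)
Your strategy is genuinely different from the paper's: you propose a direct, global matching of the zero/pole divisor of $T_{\mu\*\bmu}(u)$ against the three factors, quadrant by quadrant, whereas the paper proves \eqref{eq:mumudecomp} by induction on $|\mu|$ --- base case $\mu=\{(0,0)\}$, inductive step via the single correction factor $T_{\bmu}(u-[s])/T_{\mu+s}(u-[\bs])$, which is expanded at the corners $s$ and $\bs$ using Lemma~\ref{lemma:expansion} and then converted with the Flip Lemma~\ref{lemma:flip} and Lemma~\ref{lemma:recthook}. The quadrant picture you describe is correct (it is exactly the content of Figure~\ref{threequadrants}), so your route is not wrong in principle.

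However, as written the proposal has a genuine gap at its central step. The Flip Lemma compares hook products of $\sigma$ against $\sigma\setminus t$ and of $\bsigma$ against $\bsigma\cup\bt$; it is tailored to a single-box modification, which is precisely what the paper's induction supplies at each step. In your setting you would be expanding every factor $T_\mu(u-[c])$, $c\in\bmu$, separately, and there is no single inner corner relative to which the Flip Lemma applies; what you actually need is a global identity asserting that the multiset of zeros and poles of $T_{\mu\*\bmu}$ lying in the upper strip is exactly $\{\bms{v}-\lhv{\bmu}{b}\}_{b\in\bmu}$ over $\{\bms{v}-\lhv{m^n}{\vflip{b}}\}_{b\in\bmu}$, and dually on the right. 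That identity is essentially the theorem itself, and neither Lemma~\ref{lem:cornerdiffashooks} nor Lemma~\ref{lemma:flip} delivers it without the incremental scaffolding. Similarly, your interior claim that the order at $[s]$ collapses to $\mathbf{1}[s\in\mu]-\mathbf{1}[\bms{v}-s\in\mu]$ does not follow ``via Lemma~\ref{lem:Torderformula}'' alone: that lemma only gives the raw count $\left|\{b\in\bmu: s-b\in\addset_\mu\}\right|-\left|\{b\in\bmu: s-b\in\remset^+_\mu\cup(0,0)\}\right|$, and reducing this difference to the two indicator functions is a separate (true, but unproven) counting argument. To complete your route you must either prove these two identities outright or pass to a one-box-at-a-time argument --- at which point you have rederived the paper's induction.
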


\begin{figure}[!htb]
\centering
\includegraphics[scale=1.25,page=3]{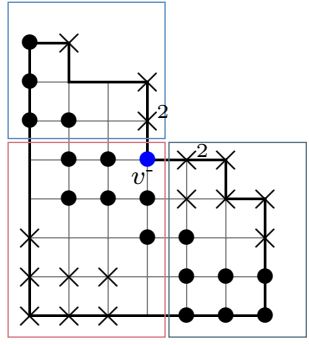}
\caption{The three quadrants of $T_{\mu\*\bmu}(u)$ around $u=[\bms{v}]$. Note that each region contains an equal number of poles and zeros}
\label{threequadrants}
\end{figure}

\begin{proof}
We prove this by induction on the number of boxes in $\mu$.
We start with the base case $\mu_0 = \{1\}$ and so $\bar\mu_0=m^n\setminus\bms{v}$, where we find
\begin{eqnarray*}
T_{\mu_0\*\bmu_0}(u) &=& T_{1\*(m^n \setminus \bms{v} )}(u)\\
 &=& \frac{u-[0,0]}{u-[m-1,n-1]} \frac{u-[m-1,n]}{u-[0,n]} \frac{u-[m,n-1]}{u-[m,0]}.
\end{eqnarray*}
Of the three terms in this product, the first and last agree with the statement of the theorem, with the product over the the only box $b=(0,0)$ of $\mu_0 = \{1\}$. For the middle factor, we need to check
\begin{equation*}
\prod_{b \in \bmu_0} \frac{u-[\bms{v} - \lhv{\bmu_0}{b} ]}{u-[\bms{v}-\lhv{m^n}{{\vflip{ b}}}]} =  \frac{u-[m-1,n]}{u-[0,n]}.
\end{equation*}
To show this holds, we first fill out the bottom product from $\bmu_0 \to m^n$
\[ \prod_{b \in \bmu_0} \tfrac{u-[\bms{v} - \lhv{\bmu_0}{b} ]}{u-[\bms{v}-\lhv{m^n}{{\vflip{ b}}}]} =(u-[\bms{v}-\lhv{m^n}{{\widehat{\bms{v}}}}])\tfrac{\prod_{b \in \bmu_0} u-[\bms{v} - \lhv{\bmu_0}{b} ]}{\prod_{b \in m^n} u-[\bms{v}-\lhv{m^n}{{b}}]}.  \] 
Next, we similarly fill out the numerator product by carefully expanding the row/columns containing $\bms{v}$, which yields
\[ \prod_{b \in \bmu_0} u-[\bms{v} - \lhv{\bmu_0}{b} ] = \frac{(u-[\bms{v} - \lhv{m^n}{\bms{v}}]) \prod_{b \in m^n} (u-[\bms{v} - \lhv{m^n}{b} ])}{(u-[\bms{v} - \lhv{m^n}{(0,n-1)} ])(u-[\bms{v} - \lhv{m^n}{(m-1,0)} ])} . \]
Combining these previous two results, we find
\[ \prod_{b \in \bmu_0} \frac{u-[\bms{v} - \lhv{\bmu}{b} ]}{u-[\bms{v}-\lhv{m^n}{{\vflip{ b}}}]}  = \frac{(u-[\bms{v} - \lhv{m^n}{\bms{v}} ])(u-[\bms{v}-\lhv{m^n}{{\widehat{\bms{v}}}}])}{(u-[\bms{v}- \lhv{m^n}{(0,n-1)} ])(u-[\bms{v} - \lhv{m^n}{(m-1,0)} ])}.  \]
Now using $\widehat{\bms{v}} = (m-1,0)$, $\lhv{m^n}{\bms{v}} = (0,-1)$, and $\lhv{m^n}{(0,n-1)} = (m-1,-1)$, we arrive at
\[\prod_{b \in \bmu_0} \frac{u-[\bms{v} - \lhv{\bmu}{b} ]}{u-[\bms{v}-\lhv{m^n}{{\vflip{ b}}}]} =  \frac{u-[\bms{v} -(0,-1)  ]}{u-[\bms{v} - (m-1,-1) ]} =  \frac{u-[m-1,n]}{u-[0,n]}. \] 
Thus we have shown the base case.
Next, for the inductive step, we assume the theorem holds for $\mu$, and we show it holds for $\mu+s$. Here, we have $\overline{\mu+s} = \bmu - \bs$ where  $s+\bs = \bms{v}$.
We then find
\begin{eqnarray*}
T_{\mu+s\*\overline{\mu+s}}(u) &=& T_{\mu+s\*(\bmu - {\bs})}(u)\\
&=&  \frac{ T_{s\* \bmu}(u)}{ T_{(\mu+s)\*{\bs}}(u)}T_{\mu\*\overline{\mu}}(u)\\
&=& \frac{ T_{\bmu}(u-[s])}{ T_{\mu+{s}}(u-[\bs])}  T_{\mu\*\overline{\mu}}(u).
\end{eqnarray*}
We now simplify this factor in the last line by using the
expansion in Lemma~\ref{lemma:expansion} to expand
the numerator at $\bs \in \remset_{\bmu}$
and the denominator at $s \in \remset_{\mu+s}$:
\[
T_{\bmu}(u-[s]) = (u-[s])(u-[v])
\tfrac{\prod_{b \in R^{\bs}_{\bmu-\bs}  }(u-[v- \uhv{\bmu-\bs}{b} ])}{\prod_{b \in R^{\bs}_{\bmu}  }(u-[v-\uhv{\bmu}{b}])}
\textcolor{blue}{ \tfrac{\prod_{b \in C^{\bs}_{\bmu-\bs}  }(u-[v+\lhv{\bmu-\bs}{b}])
}{\prod_{b \in C^{\bs}_{\bmu} }(u-[v+\lhv{\bmu}{b}])}}.    \]
\[ T_{ \mu+s}(u-[\bs]) = (u-[\bs])(u-[v])
\textcolor{blue}{ \tfrac{\prod_{b \in R^s_{\mu} }(u-[v-\uhv{\mu}{b}])}{
\prod_{b \in R^s_{\mu+s}  }(u-[v-\uhv{\mu+s}{b}])}}  
\tfrac{\prod_{b \in C^s_\mu  }(u-[v+ \lhv{\mu}{b}])}{\prod_{b \in C^s_{\mu+s}  }(u-[v+\lhv{\mu+s}{b}])}.  
\] 

We use Lemma~\ref{lemma:flip} to flip the blue factors
to the following red factors,
and use Equation~\ref{eq:lrhookshift},
to arrive at the expressions
\[
T_{\bmu}(u-[s]) = \tfrac{(u-[s])(u-[v])}{
\textcolor{red}{(u-[v+(m,0)-\bps{\bs}])}}
\tfrac{\prod_{b \in R^{\bs}_{\bmu-\bs}  }(u-[v-\uhv{\bmu-\bs}{b}])}{
\prod_{b \in R^{\bs}_{\bmu}  }(u-[v-\uhv{\bmu}{b}])}
\textcolor{red}{ \tfrac{\prod_{b \in R^s_{\mu} }(u-[v+\lhv{\mu+s}{b}])}{
\prod_{b \in R^s_{\mu} }(u-[v+\lhv{\mu}{b}])} },    
\] 
\[
T_{ \mu+s}(u-[\bs]) = \tfrac{(u-[\bs])(u-[v])}{\textcolor{red}{(u-[v-(\bps{s}-(0,n))])}}
\textcolor{red}{  \tfrac{\prod_{b \in C^{\bs}_{\bmu-\bs}  }(u-[v-\uhv{\bmu}{b}])}{
\prod_{b \in C^{\bs}_{\bmu-\bs}  }(u-[v-\uhv{\bmu-\bs}{b}])} }
\tfrac{\prod_{b \in C^s_\mu  }(u-[v+\lhv{\mu}{b}])}{\prod_{b \in C^s_{\mu+s}  }(u-[v+\lhv{\mu+s}{b}])}.  
\] 
The ratio of these terms then fills out to products over the entire partitions $\mu$, $\bmu$, as the $s$-row-column products are the only ones that are changed by adding in $s$, and so we arrive at the following expression,
\begin{eqnarray}\label{eq:bigprod}
\tfrac{T_{\bmu}(u-[s])}{T_{ \mu+s}(u-[\bs])} &=& 
\tfrac{ (u-[s]) }{ (u-[\bs]) } \tfrac{ (u-[\bms{v}- (s-(0,n))]) }{ (u-[\bms{v}+(m,0)-\bs]) } \\
&&
\times \tfrac{\prod_{b \in \bmu-\bs  }(u-[v-\uhv{\bmu-\bs}{b}])}{\prod_{b \in {\bmu} }(u-[v-\uhv{\bmu}{b}])}\tfrac{\prod_{b \in \mu+s  }(u-[v+\lhv{\mu+s}{b}])}{\prod_{b \in \mu  }(u-[v+\lhv{\mu}{b}])}.
\end{eqnarray}
We consider each of these four terms separately, and note their effect on the formula~\eqref{eq:mumudecomp}.
For the first term of formula~\ref{eq:bigprod}, it correctly updates the contribution from $s$ in the first term in formula~\eqref{eq:mumudecomp}, i.e.
\begin{equation*}
\frac{ (u-[s])}{(u-[\bs])} \prod_{b \in \mu} \frac{u-[b]}{u-[\bb] } = \prod_{b \in \mu \cup s} \frac{u-[b]}{u-[\bb] }.
\end{equation*}
The second term of formula~\eqref{eq:bigprod} may be rewritten using Lemma~\ref{lemma:recthook}, as
\begin{equation*}\frac{ u-[\bms{v}-\lhv{m^n}{\hflip{s}}] }{ u-[\bms{v}+\uhv{m^n}{\vflip{\bs}}] }. \end{equation*}
If we note that $\vflip{\bs} = \hflip{s}$, we see that this term is precisely what is needed to include the $s$ contribution (resp. remove the $\bs$ contribution) in the denominator of the third (resp. second) term in formula~\eqref{eq:mumudecomp}.
The last two terms in formula~\eqref{eq:bigprod}  update the numerators of the last two terms in formula~\eqref{eq:mumudecomp} respectively.
Thus we see that the factor of $\tfrac{T_{\bmu}(u-[s])}{T_{ \mu+s}(u-[\bs])}$ precisely is what is required for the inductive step to work.

Note: As the function $T_{\mu\*\bmu}$ is manifestly symmetric under $\mu \leftrightarrow \bmu$, it clear that formula~\eqref{eq:mumudecomp} is also symmetric under this exchange. As a consequence, each of the three factors in formula~\eqref{eq:mumudecomp} must also be symmetric, as they are expressed explicitly as products of poles/zeros (of the form $u-[\bms{v} \pm \lhv{}{b}]$) of the function $T_{\mu\*\bmu}(u)$ that lie in a distinct quadrant of the $u$-plane relative to $[\bms{v}]$.
\end{proof}

With this we can provide a new proof of a result first shown by \cite[4.7]{CaiJing:2013}.
\begin{corollary}
The strong Stanley conjecture holds in the rectangular case.
In particular, the rectangular Littlewood--Richardson coefficient is given by
\begin{equation}\label{eqn:rectangularLR}
\jackjLR_{\mu,\bmu}^{m^n}  =  \frac{ \prod_{b \in \mu}\lh{\mu}{b} \prod_{b \in \bmu}  \uh{\bmu}{b} }{\prod_{b \in m^n} D'_{\mu}(b) },
\end{equation}
where\footnote{Note here that the function $D'$ differs from $D$ given in formula~\eqref{defn:rectD} by flipping the hooks $U \leftrightarrow L$, since we are writing an expression for the LR coefficients rather than the Stanley inner products.}
\begin{equation*}
D'_{\mu,m^n}(b) \coloneqq
\begin{cases}
\lh{m^n}{b} & \text{ if $\vflip{b} \in \mu$} \\
\uh{m^n}{b} &  \text{ otherwise}.
\end{cases}
\end{equation*}
\end{corollary}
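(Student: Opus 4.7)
The approach is to combine Corollary~\ref{corr:rectangularcase} with the product formula~\eqref{eq:mumudecomp} for $T_{\mu\*\bmu}(u)$ and then extract the residue at $u=[\bms{v}]$. Because formula~\eqref{eq:mumudecomp} is manifestly symmetric under $\mu\leftrightarrow\bmu$, I will apply it with the roles of the two partitions swapped (equivalently, using $T_{\bmu\*\mu}=T_{\mu\*\bmu}$). In that form, the first factor becomes $\prod_{b\in\bmu}(u-[b])/(u-[\bb])$, while the second and third factors contribute the hooks $\lh{\mu}{b}$ and $\uh{\bmu}{b}$ that appear in the numerator of the target formula.

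Assuming $\mu\neq m^n$ (the case $\mu=m^n$ is trivial since then $\bmu=\emptyset$ and all products reduce to a tautology), the unique pole of $T_{\mu\*\bmu}(u)$ at $u=[\bms{v}]$ is a simple pole coming from the first factor via $b=(0,0)\in\bmu$. The second and third factors are regular and nonzero at $u=[\bms{v}]$ because the hook vectors $\lhv{m^n}{\cdot}$ and $\uhv{m^n}{\cdot}$ are never zero. Evaluating these two factors at $u=[\bms{v}]$ using $[\pm\lhv{\sigma}{b}]=\pm\lh{\sigma}{b}$ (and the analogous identity for upper hooks) produces the factors $\prod_{b\in\mu}\lh{\mu}{b}/\lh{m^n}{\vflip{b}}$ and $\prod_{b\in\bmu}\uh{\bmu}{b}/\uh{m^n}{\hflip{b}}$, while the residue of the first factor contributes a prefactor $[\bms{v}]\prod_{b\in\bmu^{\times}}[\bb]/[b]$.

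The most delicate step is showing that this prefactor, once multiplied by $\jacktop_\mu\jacktop_{\bmu}/\jacktop_{m^n}$ as prescribed by Corollary~\ref{corr:rectangularcase}, collapses to $1$. This follows from the observation that $b\mapsto\bb$ is a bijection from $\bmu$ onto $m^n\setminus\mu$ (rotation by $180^\circ$ inside the rectangle), so that $\prod_{b\in\bmu}[\bb]\cdot\jacktop_\mu=\prod_{c\in m^n\setminus\mu}[c]\cdot\prod_{c\in\mu^{\times}}[c]=\jacktop_{m^n}$, and this combines with $[\bms{v}]\cdot\jacktop_{\bmu}/\prod_{b\in\bmu^{\times}}[b]=1$.

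To finish, I must identify the denominator $\prod_{b\in\mu}\lh{m^n}{\vflip{b}}\cdot\prod_{b\in\bmu}\uh{m^n}{\hflip{b}}$ with $\prod_{b\in m^n}D'_{\mu,m^n}(b)$. Reindexing by $c=\vflip{b}$ and $c=\hflip{b}$ respectively converts these into $\prod_{c\in\vflip{\mu}}\lh{m^n}{c}\cdot\prod_{c\in\hflip{\bmu}}\uh{m^n}{c}$, and the identity $\vflip{\hflip{\bmu}}=\{\bb:b\in\bmu\}=m^n\setminus\mu$ shows that $\vflip{\mu}$ and $\hflip{\bmu}$ form a disjoint partition of $m^n$. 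Since $b\in\vflip{\mu}$ is equivalent to $\vflip{b}\in\mu$, this product equals exactly $\prod_{b\in m^n}D'_{\mu,m^n}(b)$, and the corollary follows. The main obstacle throughout is maintaining consistent combinatorial bookkeeping of the bars, hats, and flips; once the correctly oriented form of~\eqref{eq:mumudecomp} is chosen, no substantively new idea is required beyond Corollary~\ref{corr:rectangularcase} and the preceding structural theorem.
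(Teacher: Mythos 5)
Your proposal is correct and follows essentially the same route as the paper: combine Corollary~\ref{corr:rectangularcase} with the quadrant decomposition \eqref{eq:mumudecomp} (invoking the $\mu\leftrightarrow\bmu$ symmetry to obtain the desired hook assignment), cancel the $\jacktop$ prefactors via the bijection $b\mapsto\bb$ from $\bmu$ onto $m^n\setminus\mu$, and identify the denominator with $\prod_{b\in m^n}D'_{\mu,m^n}(b)$ using $\hflip{\bb}=\vflip{b}$. The only blemish is a double-counted factor in your cancellation step: since $\prod_{b\in\bmu}[\bb]$ already contains $[\overline{(0,0)}]=[\bms{v}]$, the remaining identity should read $\jacktop_{\bmu}/\prod_{b\in\bmu^{\times}}[b]=1$ without the extra $[\bms{v}]$.
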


\begin{figure}[!htb]
\centering
\includegraphics[width=0.6\textwidth,page=4]{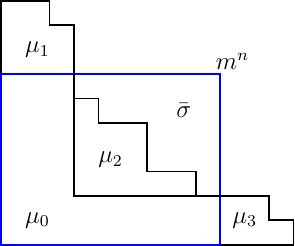}
\caption{The rectangular case of the Littlewood--Richardson coefficient, $\jackjLR_{\mu, \bmu}^{m^n}$}
\label{fig:rectcase}
\end{figure}

\begin{proof}
As stated in Corollary~\ref{corr:rectangularcase}, we have
\begin{equation*}
\jackjLR_{\mu,\bmu}^{m^n} =\frac{\jacktop_\mu \jacktop_{\bmu}} {\jacktop_{m^n}}  \Res_{u=[\bms{v}]} T_{\mu\*\bmu}(u).
\end{equation*}
Using formula~\ref{eq:mumudecomp}, we find
\begin{equation}\label{eq:resexpression}
\Res_{u=[\bms{v}]}  T_{\mu\*\bmu}(u) =
\frac{\prod_{b \in \mu} [\bb]}{\prod_{b \in \mu^\times} [b] } 
\prod_{b \in \bmu} \frac{\lh{\bmu}{b} }{\lh{m^n}{\vflip{b}}} \prod_{b \in \mu} \frac{\uh{\mu}{b}}{\uh{m^n}{\hflip{b}}}.
\end{equation}
As noted earlier, we know that each of the three factors in \eqref{eq:resexpression} remains
invariant under the switch $\mu \leftrightarrow \bmu$, so we are free to
switch each of the last two factors. 
This provides a direct proof that this result is symmetric in $\mu$ and $\bmu$, and gives an alternate choices of hooks,
\begin{equation*}
\Res_{u=[\bms{v}]}  T_{\mu\*\bmu}(u) =
\frac{\prod_{b \in \mu} [\bb]}{\prod_{b \in \mu^\times} [b] } 
\prod_{b \in \mu} \frac{\lh{\mu}{b} }{\lh{m^n}{\vflip{b}}} \prod_{b \in \bmu} \frac{\uh{\bmu}{b}}{\uh{m^n}{\hflip{b}}}.
\end{equation*}
We group these terms as follows,
\begin{eqnarray*}
&=& \frac{ \jacktop_{m^n}/\jacktop_{\bmu} }{ \jacktop_{\mu} }   \frac{ \prod_{b \in \mu}\lh{\mu}{b} \prod_{b \in \bmu}  \uh{\bmu}{b} }{\left( \prod_{b \in \mu} \lh{m^n}{\vflip{b}}   \prod_{b \in \bmu} \uh{m^n}{\hflip{b}} \right) }.
\end{eqnarray*}
The last thing we need to check is the following expression for the denominator
\begin{equation*}
\prod_{b \in \mu} \lh{m^n}{\vflip{b}}   \prod_{b \in \bmu} \uh{m^n}{\hflip{b}} = \prod_{b \in \mu} \lh{m^n}{\vflip{b}}   \prod_{b \in m^n/\mu} \uh{m^n}{\hflip{\bb}}  = \prod_{b \in m^n} D'_{\mu,m^n}(b),
\end{equation*}
where we have used $\hflip{\bb} = \vflip{b}$.
\end{proof}

Note, that by swapping $\mu \to \bmu$ in formula~\eqref{eq:resexpression} 
we have the two representations of this Littlewood--Richardson coefficient (see Figure~\ref{fig:rectcasetwopresentations}):
\begin{equation}\label{eqn:rectangularLRalt}
%\jackjLR_{\mu,\bmu}^{m^n}  =  \frac{ \prod_{b \in \bmu}\lh{\bmu}{b} \prod_{b \in \mu}  \uh{\mu}{b} }{\prod_{b \in m^n} D'_{\mu,\bmu}(b) },
\jackjLR_{\mu,\bmu}^{m^n}  =  \frac{ \prod_{b \in \mu}\lh{\mu}{b} \prod_{b \in \bmu}  \uh{\bmu}{b} }{\prod_{b \in m^n} D'_{\mu,m^n}(b) } = \frac{ \prod_{b \in \mu}\uh{\mu}{b} \prod_{b \in \bmu}  \lh{\bmu}{b} }{\prod_{b \in m^n} D'_{\bmu,m^n}(b) }.
\end{equation}

\begin{figure}[!htb]
\centering
\includegraphics[width=0.5\textwidth]{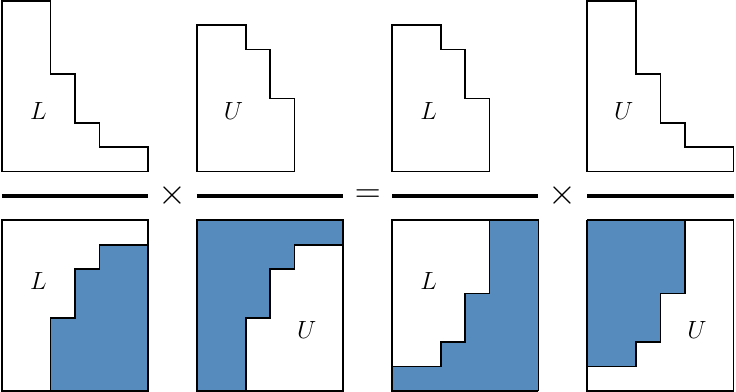}
\caption{
The two choices of hooks for the Littlewood--Richardson coefficients in the rectangular case. Each of the two fractions on either side of the equation are equal to the corresponding fraction on the other side. We color in blue the boxes that don't have hooks assigned to them.
}
\label{fig:rectcasetwopresentations}
\end{figure}

\section{Rectangular Union Case}

In this section we prove a generalization of the previous results. 
Fix $\mu$ and $m^n$, without necessarily assuming that $\mu \subset m^n$ as in the rectangular case. Let $v = (n,m)$, and $\bms{v} = (n-1,m-1)$.
Let $\sigma = \mu \cap m^n$. 
Let $\bsigma$ be the conjugation of $\sigma$ taken with respect to $m^n$, as illustrated in Figure~\ref{fig:rectunioncase}.
\begin{figure}[!htb]
\centering
\includegraphics[scale=1.0,page=5]{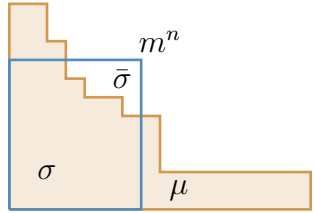}
\caption{
The setup of the rectangular union case.
}
\label{fig:rectunioncase}
\end{figure}

\begin{lemma}
We have
\begin{equation}\label{eq:claim1}
 \jackjLR_{\mu,\bsigma}^{\mu\cup m^n} =
   \jackjLR_{\sigma,\bsigma}^{ m^n} \cdot
  \prod_{b \in \mu/\sigma} T_{\bsigma} \left([\bb] \right)  .
\end{equation}
\end{lemma}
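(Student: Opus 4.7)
The plan is to apply the sum-product identity \eqref{eq:sumproductident} once to the pair $(\mu, \bsigma)$ and once to the pair $(\sigma, \bsigma)$, extract the two Littlewood--Richardson coefficients by taking residues at a single common point, and then relate the resulting residues through a multiplicative factorization of $T_{\mu \* \bsigma}$. This mirrors the approach of Corollary~\ref{corr:rectangularcase} in the rectangular case, but now the residue must be coordinated between two different sums.

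First I would single out the corner box $\bms{v} = (m-1, n-1)$ of the rectangle. Any Young diagram $\gamma$ of size $|\mu| + |\bsigma|$ containing $\mu$, $\bsigma$, and the box $\bms{v}$ must contain all of $m^n$ (since $\gamma$ is a partition containing $\bms{v}$), hence $\mu \cup m^n \subseteq \gamma$; the size match $|\mu \cup m^n| = |\mu| + |m^n| - |\sigma| = |\mu| + |\bsigma|$ forces $\gamma = \mu \cup m^n$. An analogous argument for the pair $(\sigma, \bsigma)$ isolates $\gamma = m^n$. Assuming $\emptyset \subsetneq \sigma \subsetneq m^n$ (the two boundary cases collapse \eqref{eq:claim1} to tautologies that can be checked directly), the box $\bms{v}$ lies outside both $\mu \cup \bsigma$ and $\sigma \cup \bsigma$. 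Taking $\Res_{u=[\bms{v}]}$ of \eqref{eq:sumproductident} in each case then yields
$$\hat\jackjLR_{\mu, \bsigma}^{\mu \cup m^n} = \Res_{u=[\bms{v}]} T_{\mu \* \bsigma}(u), \qquad \hat\jackjLR_{\sigma, \bsigma}^{m^n} = \Res_{u=[\bms{v}]} T_{\sigma \* \bsigma}(u).$$

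Next I would use the elementary factorization
$$T_{\mu \* \bsigma}(u) = \prod_{s \in \mu} T_{\bsigma}(u - [s]) = T_{\sigma \* \bsigma}(u) \cdot \prod_{b \in \mu/\sigma} T_{\bsigma}(u - [b]),$$
which follows from $\mu = \sigma \sqcup (\mu/\sigma)$ as a set of boxes. The key structural claim is that the right-hand product is regular at $u = [\bms{v}]$, so that its value can be pulled outside of the residue as $\prod_{b \in \mu/\sigma} T_{\bsigma}([\bb])$. A pole of $T_{\bsigma}(u-[b])$ at $u = [\bms{v}]$ would require $\bb = \bms{v} - b$ to be an outer corner of $\bsigma$. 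But any $b \in \mu/\sigma$ lies outside $m^n$, so either $b_1 \geq m$ or $b_2 \geq n$, forcing a coordinate of $\bb$ to be negative; since every outer corner of $\bsigma$ has non-negative coordinates, this is impossible. The residue therefore factors cleanly and
$$\hat\jackjLR_{\mu, \bsigma}^{\mu \cup m^n} = \hat\jackjLR_{\sigma, \bsigma}^{m^n} \cdot \prod_{b \in \mu/\sigma} T_{\bsigma}([\bb]).$$

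Finally I would strip off the hats using $\hat\jackjLR_{\lambda \mu}^\nu = \jackjLR_{\lambda \mu}^\nu \jacktop_\nu / (\jacktop_\lambda \jacktop_\mu)$. The factor $\jacktop_{\bsigma}$ cancels between the two sides, so the conversion reduces to the multiset identity $\jacktop_\mu \cdot \jacktop_{m^n} = \jacktop_\sigma \cdot \jacktop_{\mu \cup m^n}$. This holds because, after excluding $(0,0)$, both products contain each box of $\sigma^\times$ with multiplicity two and each box of $(\mu \cup m^n)^\times \setminus \sigma^\times$ with multiplicity one. Combining this with the previous display yields \eqref{eq:claim1}. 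I expect the main obstacle to be the regularity argument in the middle paragraph; once that one-line geometric observation is in place, everything else is bookkeeping.
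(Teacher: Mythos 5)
Your proposal is correct and follows the same overall strategy as the paper: take the residue of \eqref{eq:sumproductident} at $u=[\bms{v}]$, isolate $\gamma = \mu\cup m^n$ as the unique surviving term, factor $T_{\mu\*\bsigma} = T_{(\mu/\sigma)\*\bsigma}\cdot T_{\sigma\*\bsigma}$, pull the regular factor out of the residue, and strip the hats via $\jacktop_\mu\jacktop_{m^n} = \jacktop_\sigma\jacktop_{\mu\cup m^n}$. The one place you diverge is the regularity step: the paper invokes Lemma~\ref{lemma:Tpoles} (that $T_{\sigma\*\bsigma}$ already contributes a simple pole, and the order of any pole of $T_{\mu\*\bsigma}$ is bounded by one) to conclude the factor $T_{(\mu/\sigma)\*\bsigma}$ is regular at $[\bms{v}]$, whereas you verify it directly by noting that each $b\in\mu/\sigma$ lies outside $m^n$, so $\bb$ has a negative coordinate and therefore cannot be a pole (i.e.\ an outer corner of $\bsigma$). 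Your argument is more elementary and self-contained, and it bypasses any reliance on the precise pole-order bounds; the paper's version is shorter but leans on a structural lemma whose relevant content must be unpacked. You also take care to flag the degenerate cases $\sigma\in\{\emptyset,m^n\}$, where $\bms{v}\in\mu\cup\bsigma$ and the residue extraction degenerates; the paper passes over this silently, though as you observe both cases reduce to the trivial identity $1=1$.
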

\begin{proof}
First, by Lemma~\ref{lemma:Tpoles}, we know $ T_{\mu,\bsigma}(u)$ has a pole at $u=[\bms{v}]$. By formula~\ref{eq:sumproductident}, we know that
\begin{equation*}
\sum_{\gamma} \hat  \jackjLR_{\mu,\bsigma}^\gamma \sum_{b \in \gamma} \frac{1}{u-[b]} = T_{\mu\*\bsigma}(u)-1.
\end{equation*}
The residue of this equality at $u = [\bms{v}]$ is
\begin{equation*}
\sum_{\gamma: \mu\cup \bsigma \subseteq \gamma, \bms{v} \in \gamma} \hat \jackjLR_{\mu,\bsigma}^\gamma  =\Res_{u=[\bms{v}]} T_{\mu\*\bsigma}(u).
\end{equation*}
Because $\mu\cup m^n$ is the only partition of its size that contains $\mu$ and the box at $\bms{v}$, the sum on the LHS contains only the $\gamma=\mu\cup m^n$ term and we conclude
\[ 
\hat \jackjLR_{\mu,\bsigma}^{\mu\cup m^n}   =\Res_{u=[\bms{v}]} T_{\mu\*\bsigma}(u).
\]

Next, we note that
\[ 
T_{\mu\*\bsigma}(u) =  T_{(\mu/\sigma)\*\bsigma}(u) T_{\sigma\*\bsigma}(u),
\]
and as the second factor has a pole $u=[\bms{v}]$, the first factor is regular at $u=[\bms{v}]$. So,
\[ 
\Res_{u=[\bms{v}]} T_{\mu\*\bsigma}(u) = T_{(\mu/\sigma)\*\bsigma}([\bms{v}]) \Res_{u=[\bms{v}]} T_{\sigma\*\bsigma}(u).
\]
We then use
\[
T_{\{s\} *\bsigma}([\bms{v}])=T_{\bsigma}([\bms{v}-s]),
\]
to arrive at
\begin{equation}\label{eq:withhats}
\hat \jackjLR_{\mu,\bsigma}^{\mu\cup m^n} = \left( \prod_{b \in \mu/\sigma} T_{\bsigma}([\bms{v}-b]) \right) \cdot  \hat\jackjLR_{\sigma,\bsigma}^{ m^n} .
\end{equation}
Finally, we notice that
\begin{equation*}
\frac{\varpi_\mu \varpi_{\bsigma}}{\varpi_{\mu\cup m^n}} = \frac{\varpi_\sigma \varpi_{\bsigma}}{\varpi_{m^n}},
\end{equation*}
so we can drop the hats in equation~\ref{eq:withhats}.
\end{proof}

In order to expand the product term in \eqref{eq:claim1}, we produce a formula for the factors appearing therein.

\begin{lemma}For $\mu$ as before, and $s \in \addset_{\mu}$ with $s$ outside of $m^n$, then
\begin{equation}\label{eq:tboxfactor}
T_{\bsigma}([\bar s]) = \frac{
\frac{\displaystyle\prod_{b \in R^s_{\mu \cup m^n}  } \uh{\mu \cup m^n}{b} }{
\displaystyle\prod_{b \in R^s_{(\mu+s) \cup m^n} } \uh{(\mu+s) \cup m^n}{b} } \times
\frac{\displaystyle\prod_{b \in C^s_{\mu \cup m^n}  } \lh{\mu \cup m^n}{b}}{
\displaystyle\prod_{b \in C^s_{(\mu+s) \cup m^n}  } \lh{(\mu+s) \cup m^n}{b}}}{  
\frac{\displaystyle\prod_{b \in R^s_{\mu} } \uh{\mu }{b} }{
\displaystyle\prod_{b \in R^s_{\mu+s}   } \uh{\mu+s}{b}} \times  \frac{
\displaystyle\prod_{b \in C^s_\mu  } \lh{\mu}{b}}{
\displaystyle\prod_{b \in C^s_{\mu+s}  } \lh{\mu+s}{b} }}.
\end{equation}
\end{lemma}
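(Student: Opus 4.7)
The plan is to express both sides of \eqref{eq:tboxfactor} as two different evaluations of the ratio $T_{(\mu+s)\cup m^n}(u)/T_{\mu+s}(u)$ at $u = [\bps{s}]$. Since $s$ lies outside of $m^n$, we have $(\mu+s) \cap m^n = \sigma$ and $s$ is an inner corner of both $\mu+s$ and $(\mu+s)\cup m^n$ (the latter equaling $(\mu\cup m^n)+s$). Both $T_{\mu+s}$ and $T_{(\mu+s)\cup m^n}$ have simple zeros at $u = [\bps{s}]$, so the ratio extends to a well-defined finite value there.

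For the right-hand side of \eqref{eq:tboxfactor}, I would apply the expansion Lemma~\ref{lemma:expansion} to each of $T_{\mu+s}(u)$ and $T_{(\mu+s)\cup m^n}(u)$ around $s$. The common prefactors $u(u-[\bps{s}])$ cancel in the ratio. Evaluating at $u = [\bps{s}]$, each row factor collapses to the corresponding upper hook, while each column factor collapses to the corresponding lower hook times $-1$. Because the column index sets in the numerator and denominator of each row/column product differ in cardinality by exactly one box, the column sign factors produce an overall $-1$ in each of the two $T$-expansions, and these signs cancel in the overall ratio. What remains is precisely the right-hand side of \eqref{eq:tboxfactor}.

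For the left-hand side, from $T_\Gamma(u) = \prod_{b\in \Gamma} T_\lbox(u-[b])$ and the set identity $((\mu+s)\cup m^n)\setminus(\mu+s) = m^n \setminus \sigma$, one obtains the identity of rational functions
\[
\frac{T_{(\mu+s)\cup m^n}(u)}{T_{\mu+s}(u)} = \prod_{b\in m^n\setminus \sigma} T_\lbox(u-[b]).
\]
The product on the right is regular at $u = [\bps{s}]$ because $s+(1,0)$ and $s+(0,1)$ are outside $m^n$. The crucial observation is the symmetry $T_\lbox(-v) = T_\lbox(v+\alpha-1)$, verified in one line from the definition of $T_\lbox$. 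Combined with $[\bps{s}] - [b] = [s-b] + \alpha - 1$, this yields $T_\lbox([\bps{s}] - [b]) = T_\lbox([b-s])$ for every box $b$. The bijection $b \leftrightarrow \bms{v} - b$ between $m^n \setminus \sigma$ and $\bsigma$ sends $[b-s]$ to $[\bar s] - [\bms{v}-b]$, converting the product into $\prod_{b'\in\bsigma} T_\lbox([\bar s]-[b']) = T_{\bsigma}([\bar s])$.

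The main obstacle I anticipate is identifying the right $T_\lbox$ symmetry and shift that connect the evaluation point $[\bps{s}]$ (natural on the expansion-lemma side) with $[\bar s]$ (natural on the $T_{\bsigma}$ side). The remaining steps are careful bookkeeping with signs in the column products and with the rotation bijection between $m^n\setminus\sigma$ and $\bsigma$.
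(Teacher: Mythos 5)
Your proposal is correct and follows essentially the same route as the paper: identify $T_{\bsigma}([\bar{s}])$ with the ratio $T_{(\mu+s)\cup m^n}/T_{\mu+s}$ evaluated at $[\bps{s}]$ (after removing the common simple zero), then apply the expansion Lemma~\ref{lemma:expansion} around the inner corner $s$ to each factor and cancel the prefactors and the two column signs. The one genuine addition is that you actually justify the reflection identity $T_{\bsigma}([v]-u)=T_{\mu\cup m^n}(u)/T_\mu(u)$ via the symmetry $T_\lbox(-w)=T_\lbox(w+\alpha-1)$ and the rotation bijection $b\mapsto\bms{v}-b$, a step the paper asserts without proof.
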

\begin{proof}

We start by noting that for any $\mu$ such that $\mu \cap m^n = \sigma$, we have
\[
T_{\bsigma}([n,m]-u)  = \frac{T_{\mu \cup m^n}(u)}{T_{\mu}(u)}.
\]
So
\[ 
T_{\bsigma}([\bar s])=T_{\bsigma}([(n,m)-(\bps{s})])=
\frac{T'_{(\mu+s) \cup m^n}([\bps{s}])}{T'_{(\mu+s)}([\bps{s}])},
\]
where $T'$ indicates that we have dropped the zero at $u=[\bps{s}]$.
We then use the expansion Lemma~\ref{lemma:expansion} to write the terms in this ratio as

\[ 
T'_{(\mu+s) \cup m^n}([\bps{s}]) =- [\bps{s}] \times
\frac{\prod_{b \in R^s_{\mu \cup m^n}  } \uh{\mu \cup m^n}{b} }{
\prod_{b \in R^s_{(\mu+s) \cup m^n} } \uh{(\mu+s) \cup m^n}{b} } \times
\frac{\prod_{b \in C^s_{\mu \cup m^n}  } \lh{\mu \cup m^n}{b}}{
\prod_{b \in C^s_{(\mu+s) \cup m^n}  } \lh{(\mu+s) \cup m^n}{b}}.
\]

\[ T'_{(\mu+s)}([\bps{s}]) = - [\bps{s}] \times  \frac{\prod_{b \in R^s_{\mu} } \uh{\mu }{b} }{
\prod_{b \in R^s_{\mu+s}   } \uh{\mu+s}{b}} \times  \frac{\prod_{b \in C^s_\mu  } \lh{\mu}{b}}{
\prod_{b \in C^s_{\mu+s}  } \lh{\mu+s}{b} }.
\]
Taking the ratio of these these two equalities, we arrive at the result.
\end{proof}

To state the following results, we need the following
decomposition of $\mu = (\mu_0,\mu_1,\mu_2,\mu_3)$ w.r.t a
rectangle $m^n$ as in Figure~\ref{fig:mudecomp}.
\begin{figure}[!htb]
\centering
\includegraphics[scale=1.0,page=1]{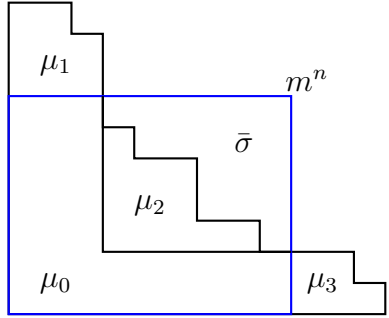}
\caption{The decomposition $\mu =: \mu_0 \cup \mu_1 \cup \mu_2 \cup \mu_3 $ corresponding to the intersection with the rectangular partition $m^n$, and the definition of $\bsigma$ as the (reverse of the) boxes that fill the remainder of the rectangle. Note that $\mu_0$ is defined as the union of all boxes under $\mu_1$ with those left of $\mu_3$.}\label{fig:mudecomp}
\end{figure}

\begin{theorem}\label{thm:rectunionjacklr}
The Jack Littlewood--Richardson coefficient
$\jackjLR_{\mu,\bsigma}^{\mu\cup m^n }$
in the rectangular union case is given by the fraction
of hooks in Figure~\ref{fig:rectunionLR}.
That is, the figure gives a prescription for upper/lower hooks for each box
in each of the three partitions.
\begin{figure}[!htb]
\centering
\includegraphics[scale=0.8,page=2]{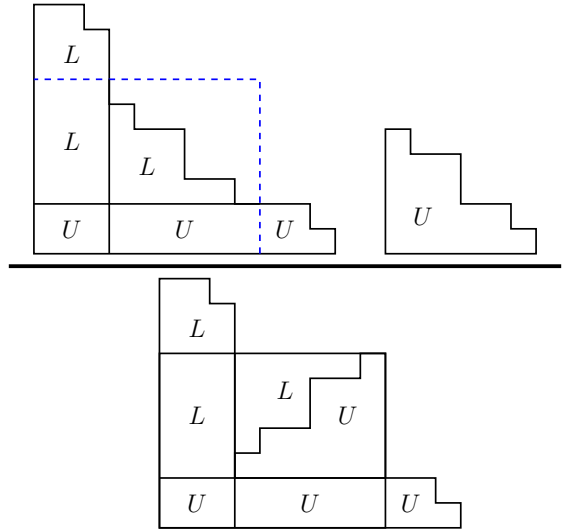}
\caption{
The coefficient $\jackjLR_{\mu,\bsigma}^{m^n \cup \mu}$ expressed as a fraction involving the assignment of upper or lower hooks for every box in each of the three partitions.
}\label{fig:rectunionLR}
\end{figure}
\end{theorem}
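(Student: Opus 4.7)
The plan is to build on the factorization \eqref{eq:claim1}, which reduces the problem to the rectangular coefficient $\jackjLR_{\sigma,\bsigma}^{m^n}$ (already determined in \eqref{eqn:rectangularLR}) multiplied by the product $\prod_{b \in \mu/\sigma} T_{\bsigma}([\bb])$. The boxes of $\mu/\sigma$ lie outside the rectangle $m^n$, so they naturally split into a piece in $\mu_1$ (to the right of the rectangle) and a piece in $\mu_3$ (above the rectangle). We evaluate each $T_{\bsigma}([\bb])$ via the row/column formula \eqref{eq:tboxfactor}, which expresses the factor as a ratio of hook products for the pairs $(\mu,\mu+s)$ and $(\mu\cup m^n, (\mu+s)\cup m^n)$ along the row and column of the added box.

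The engine of the proof is a telescoping argument. Choose any linear order on $\mu/\sigma$ compatible with containment of partitions, so that we build up a chain $\sigma = \mu^{(0)} \subset \mu^{(1)} \subset \cdots \subset \mu^{(k)} = \mu$ by adding one outer corner $s^{(i)}$ at each step. Applying \eqref{eq:tboxfactor} to each $s^{(i)}$ in turn, the ratio of hook products along the row of $s^{(i)}$ in $\mu^{(i-1)}$ cancels against the corresponding ratio in $\mu^{(i)}$ whenever no subsequent box is added in the same row, and similarly for columns. After all cancellations, the numerator of $\prod_{b \in \mu/\sigma} T_{\bsigma}([\bb])$ becomes a product of upper hooks in the rows/columns of $\mu\cup m^n$ and lower hooks in its columns/rows, and the denominator becomes the analogous product for $\mu$, restricted to the rows and columns that contain boxes of $\mu/\sigma$. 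By construction these are exactly the rows and columns that meet $\mu_1 \cup \mu_3$.

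Combining this with the rectangular formula \eqref{eqn:rectangularLR} applied to $\jackjLR_{\sigma,\bsigma}^{m^n}$, the numerator contributes $\prod_{b\in\sigma}\lh{\sigma}{b} \prod_{b\in\bsigma}\uh{\bsigma}{b}$ and the denominator contributes $\prod_{b\in m^n} D'_{\sigma,m^n}(b)$. The telescoping contribution then inflates the $\sigma$-hooks to $\mu$-hooks exactly for boxes in the affected rows/columns, and introduces the full collection of hooks on $\mu\cup m^n$ in those same rows and columns. A direct row-by-row and column-by-column comparison with Figure~\ref{fig:rectunionLR} shows that the pattern of upper versus lower hooks produced in $\mu$, in $\bsigma$, and in $\mu \cup m^n$ matches the prescription exactly; in particular the $\bsigma$ factor is untouched by the telescoping and keeps the upper-hook assignment inherited from the rectangular case.

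The main obstacle will be verifying that the telescoping of \eqref{eq:tboxfactor} genuinely yields the hook assignment of Figure~\ref{fig:rectunionLR} on the boxes that straddle the four regions $\mu_0, \mu_1, \mu_2, \mu_3$, and in particular the interaction between a row (or column) containing both boxes of $\sigma$ and boxes of $\mu/\sigma$. Here the key observation is the same one that powered the proof of the expansion Lemma~\ref{lemma:expansion}: two adjacent boxes in the same row share their upper hook vector unless one of them lies in the column of an inner/outer corner of the growing partition, so the only surviving contributions are precisely the corner terms, which after the inductive growth of $\mu$ and $\mu\cup m^n$ correspond exactly to the full-row and full-column products indicated in Figure~\ref{fig:rectunionLR}. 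Once this bookkeeping is made explicit, the resulting formula is manifestly a fraction of upper/lower hook lengths, and each of $|\mu|$ and $|\mu\cup m^n|$ assignments balances, yielding the strong Stanley conjecture in this case.
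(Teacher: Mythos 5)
Your overall plan aligns with the paper's: you start from \eqref{eq:claim1}, evaluate each factor $T_{\bsigma}([\bb])$ via \eqref{eq:tboxfactor}, and grow $\mu$ from $\sigma$ one box at a time. That is the correct skeleton, and your observation that the product over boxes in a single row or column of $\mu/\sigma$ telescopes is also used in the paper. However, there is a genuine gap between the telescoped product and the hook pattern of Figure~\ref{fig:rectunionLR}, and the tool you invoke to close it is the wrong one.

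After telescoping, the rows touched by $\mu_3$ contribute \emph{upper}-hook products over the row of $\mu$ (and of $\mu\cup m^n$), since \eqref{eq:tboxfactor} has $\uh{}{}$'s in its row factors. But in the rectangular starting formula \eqref{eqn:rectangularLR}, the corresponding row of $\sigma$ has \emph{lower} hooks, and the corresponding boxes of $m^n$ below the bottom-right corner have a staggered $L$/$U$ profile determined by $D'_{\sigma,m^n}$. So the new factor does not merely ``inflate $\sigma$-hooks to $\mu$-hooks'': it has to convert a whole staggered stretch of $L$-hooks into $U$-hooks while the combinatorics of the row boundary shifts. The paper's proof does this explicitly with a swap step followed by an application of the identity
\[
\lh{\mu}{b} \;=\; \lh{\mu \cup m^n}{\,b+(0,\ell_{\mu\cup m^n}(b)-\ell_{\mu}(b))\,},
\]
which matches lower hooks at $b$ in $\mu$ to lower hooks at a \emph{shifted} box in $\mu\cup m^n$, precisely when the row of $b$ reaches the right edge of $m^n$. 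That identity is the crux; it is qualitatively different from the ``adjacent boxes share an upper hook vector'' cancellation you cite from the proof of Lemma~\ref{lemma:expansion}, which only serves to extend a corner product to a full row product inside a single partition, and cannot perform the $L\to U$ flip across two different partitions. Your argument also understates the asymmetry between the $\mu_1$ and $\mu_3$ regions: $\mu_1$ is straightforward exactly because the columns beneath it are already all $L$-hooks in the chosen starting configuration (so the new column factors literally extend), whereas $\mu_3$ faces rows with mixed hooks and needs the flip identity. Without naming and applying that identity, the ``direct row-by-row comparison'' in your third paragraph does not go through.
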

\begin{proof}

We refer to the decomposition in Fig.~\ref{fig:rectunionLR}, according to which the the result~\eqref{eq:claim1} becomes
\begin{equation}\label{eqn:gmusigma}
\hat \jackjLR_{\mu,\bsigma}^{\mu\cup m^n} =\left( \prod_{ b \in \mu_3} T_{\bsigma}([\bb]) \right)  \left( \prod_{b \in \mu_1} T_{\bsigma}([\bb]) \right) \cdot \hat \jackjLR_{\sigma,\bsigma}^{ m^n}.
\end{equation}
We will proceed by beginning with $\sigma= \mu \cap m^n = \mu_0 \cup \mu_2$, 
and then first extend to include $\mu_1$ by adding one box at a time, and then extending to $\mu_3$
in a similar manner.

One of these two extensions will be simpler, depending one which choice of hooks we take for $\jackjLR_{\sigma, \bsigma}^{m^n}$.

We start with by choosing the hooks as given by Equation~\ref{eqn:rectangularLR}, represented by
Fig.~\ref{fig:rectunionLRStart}, noting that the technique works similarly if we begin with the alternate choice. Importantly, with this choice, every box in $\mu_0$ that is beneath $\mu_1$ has been assigned lower hooks, in both the $\sigma$ factor in the numerator and the $\sigma \cup m^n$ factor in the denominator. This will make including the boxes of $\mu_1$ rather straightforward.

\begin{figure}[!htb]
\centering
\includegraphics[scale=0.5,page=3]{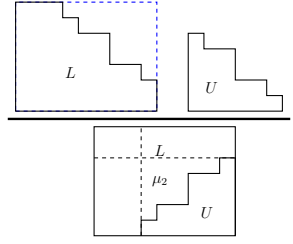}
\caption{The formula for $\jackjLR_{\sigma, \bsigma}^{m^n}$, indicating the location of the flipped component of $\mu_2$ }
\label{fig:rectunionLRStart}
\end{figure}

We now proceed to add in each of the boxes in $s \in \mu_1$.
Since $\mu_1$ is in the top-left region above $m^n$,
for $b \in \mu_1$ we have
\[ 
\ulh{\mu}{b} = \ulh{\mu\cup m^n}{b}.
\]
Because of this, and because $s \in \mu_1$ implies $R_{\mu_1}^s \subset \mu_1$,
the factor (formula~\eqref{eq:tboxfactor}) according to equation \eqref{eqn:gmusigma} coming from adding of the box $s\in\mu_1$ simplifies to
\[ 
T_{\bsigma}([\bar s])=
\frac{\prod_{b \in C^s_{\mu \cup m^n}  } \lh{\mu \cup m^n}{b} }{
\prod_{b \in C^s_{(\mu \cup s) \cup m^n}  } \lh{(\mu  \cup  s) \cup m^n}{b} } /
\frac{\prod_{b \in C^s_\mu  } \lh{\mu}{b} }{ \prod_{b \in C^s_{\mu  \cup  s}  } \lh{\mu \cup s}{b}}.
\]
We represent this `new factor' pictorially in the right hand side of Fig.~\ref{fig:Lmu1ext}. We color in orange those boxes that we are tasked with assigning hooks to, and color in blue those which aren't assigned hooks. 
Of the terms appearing in this new factor, two will remove the $L$-hooks for all $b\in C_\sigma^s$ and $b\in C_{\sigma \cup m^n}^s$, and the other two terms will add these hooks back in for the larger 
partitions $\sigma\cup s$ and $(\sigma \cup m^n) \cup s$, and thus extending the choice of a $L$-hook to the new box in the column, as seen in Fig.~\ref{fig:lowerhookmu1added}.
By continuing in this way for each box $b \in \mu_1$, each new box will always have all lower hooks beneath it, and thus these factors provide exactly for $L$-hooks for all the boxes in $\mu_1$.

\begin{figure}[!htb]
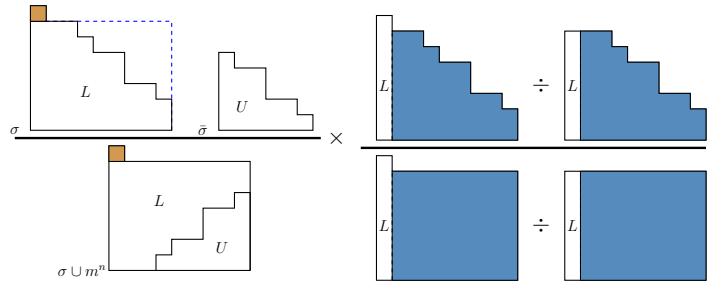

\centering
\includegraphics[scale=0.5,page=5]{stanleyProofPics}
\includegraphics[scale=0.5,page=6]{stanleyProofPics}
\caption{The new orange box $s \in \mu_1$ being added, and on the left we have the factor $T_{\bsigma}([\bs])$. The task is to assign hooks to the added orange box(es), using the hooks coming from the `new factor' on the right, where blue indicates boxes that don't have assigned hooks.}
\label{fig:Lmu1ext}
\end{figure}
\begin{figure}[!htb]
\centering
\includegraphics[scale=0.5,page=7]{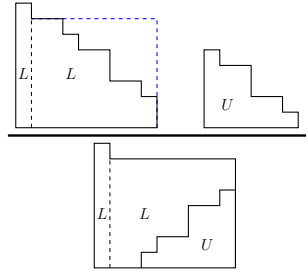}
\caption{We notice that these factors precisely extend the column of $L$-hooks on the far left up one box.}
\label{fig:lowerhookmu1added}
\end{figure}

Next, we proceed to adding the first box $s$ of $\mu_3$.
Unlike the boxes we added in $\mu_1$, $s \in \mu_3$ does not have all $U$-hooks to the right of it,
and so a more involved process will be required to add in such a box.
We observe that for $b \in \mu_3$ we have
\[ 
\ulh{\mu}{b} = \ulh{\mu\cup m^n}{b}.
\]
and thus, similar to before, we find that a box $s \in \mu_3$ implies that $C_{\mu_3}^s \in \mu_3$, and so the new factor reduces to
\[ 
T_{\bsigma}([\bar s])=
\frac{\prod_{b \in R^s_{\mu \cup m^n}  } \uh{\mu \cup m^n}{b} }{
\prod_{b \in R^s_{(\mu \cup s) \cup m^n}  } \uh{(\mu  \cup  s) \cup m^n}{b} } /
\frac{\prod_{b \in R^s_\mu  } \uh{\mu}{b} }{ \prod_{b \in R^s_{\mu  \cup  s}  } \uh{\mu \cup s}{b}}.
\]
This new factor is represented in Figure~\ref{fig:mu3firstbox}.

\begin{figure}[!htb]
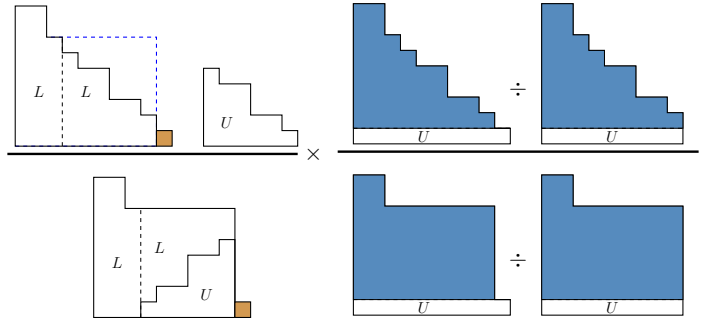

\centering
\includegraphics[scale=0.5,page=8]{stanleyProofPics}
\includegraphics[scale=0.5,page=9]{stanleyProofPics}
\caption{On the left, we show the new factor $T_{\bsigma}([\bs])$ corresponding to adding the box $s \in \mu_3$ ($s$ drawn in orange).}
\label{fig:mu3firstbox}
\end{figure}
The next step is to swap the row $R_\mu^s$ of $L$-hooks in $\mu$ that contains $s \in \mu_3$, 
with the row of $U$-hooks in $\mu \cup s$ from the new factor, as indicated in Figure~\ref{fig:step1rowswap}.

\begin{figure}[!htb]
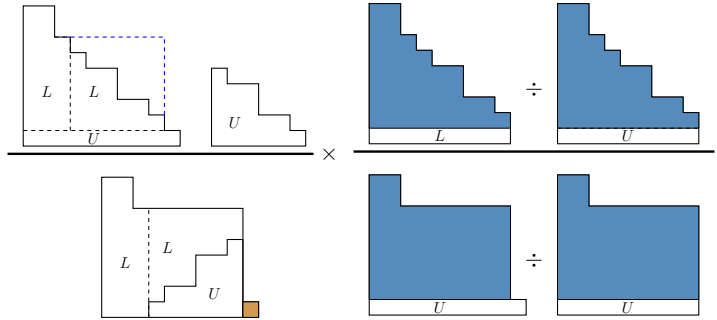

\centering
\includegraphics[scale=0.5,page=10]{stanleyProofPics}
\includegraphics[scale=0.5,page=11]{stanleyProofPics}
\caption{The result of swapping the first term in the numerator of the new factor with the bottom row of the $\mu$ factor.}
\label{fig:step1rowswap}
\end{figure}

Then, we split up the two rows of hooks in the numerator of the new factor according 
to the staggered row profile of $L$-hooks in the denominator of the right hand side, as in Figure~\ref{fig:step2splitrow}.
\begin{figure}[!htb]
\centering
\includegraphics[scale=0.5,page=1]{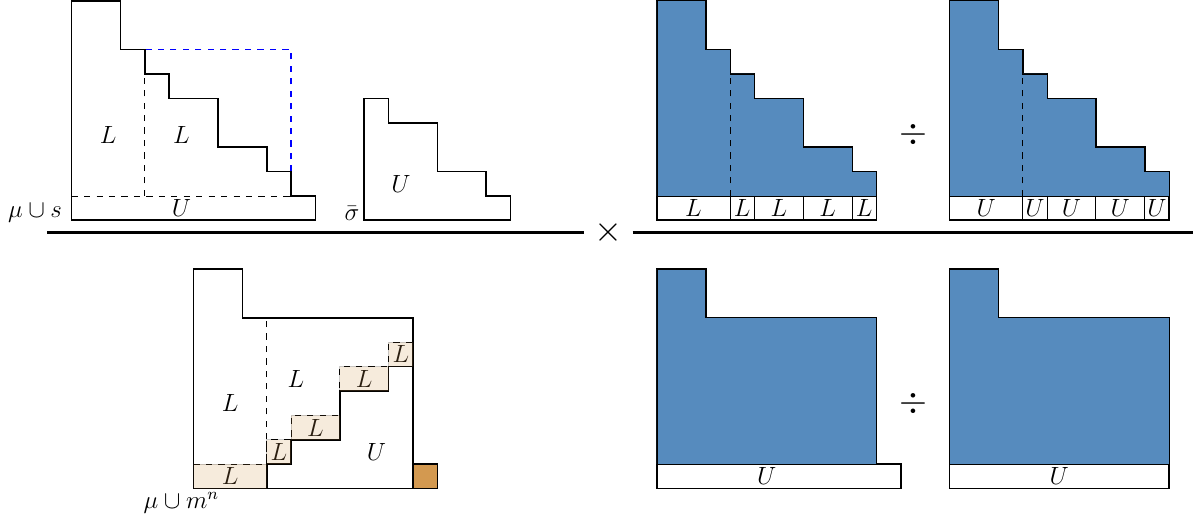}
\caption{Split up the two rows in the numerator of the new factor according to the profile of lower hooks in the denominator of the left hand side.}
\label{fig:step2splitrow}
\end{figure}
We now show that this numerator of the new factor gives precisely the hooks required to 
flip the indicated hooks in the denominator of the right hand side from $L \to U$.
Let $\ell_{\lambda}(b)$ be shorthand for $\leg_\lambda(b)$. 
Let $b \in \mu$ be such that the row $R_\mu^b$ extends all the way to 
the right boundary of $m^n$ (for example, in the case at hand $b$ is in the bottom row of $\mu$). 
The claim is
\begin{equation*}
\lh{\mu}{b} = \lh{\mu \cup m^n}{b+(0,\ell_{\mu \cup m^n}(b)-\ell_{\mu}(b))}.
\end{equation*}
One can easily verify this by direct calculation or pictorially. 
Thus, we use the hooks on the numerator of the new factor to flip all the hooks on the 
staggered row in the denominator. After this, we are left with the 
two terms in the denominator of the new factor, as in Figure~\ref{fig:step2flippedstagger}.

\begin{figure}[!htb]
\centering
\includegraphics[scale=0.5,page=2]{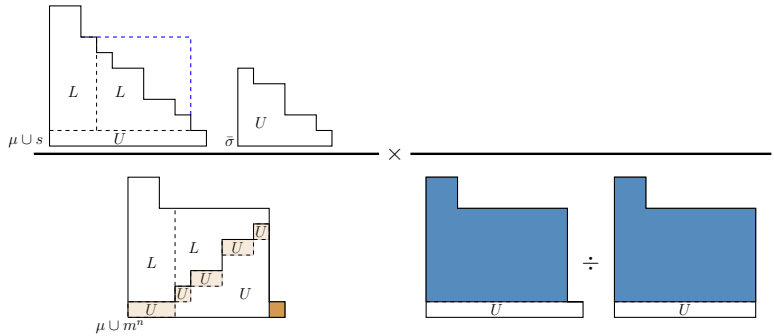}
\caption{With the staggered row now flipped to upper hooks, the bottom row of the denominator can now easily be extended.}
\label{fig:step2flippedstagger}
\end{figure}

Now that the entire bottom row of the denominator of the RHS is all upper hooks, 
we can see that these two remaining rows in the denominator of the new factor are 
precisely what is required to extend this row of upper hooks to include $s \in \mu_3$. 
With this, all hooks in the new factor have been used, and all boxes in the 
triple of partitions have been assigned a choice of hooks, as in Figure~\ref{fig:completed}.

\begin{figure}[!htb]\centering
\includegraphics[scale=0.5,page=5]{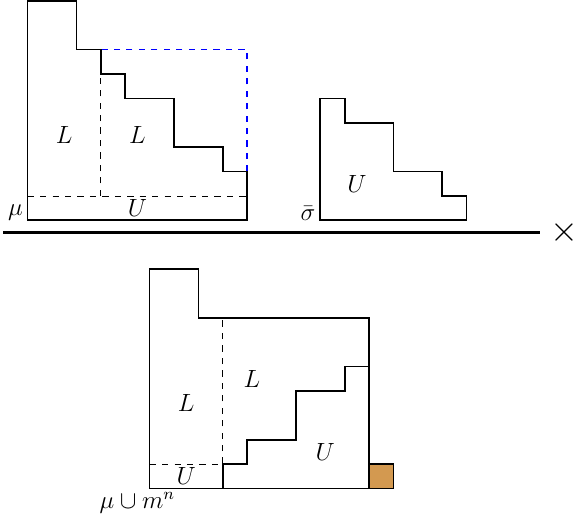}
\caption{After extending the bottom row, we have assigned hooks for all boxes including $s$.}
\label{fig:completed}
\end{figure}

Now, we can use the above steps to include the the rest of the boxes inside $\mu_3$. 
If the newly added box is to the right of a box added previously, the row just extends by $U$-hooks. 
If the new box in $\mu_3$ is added to a new row, then repeat all of the above steps, starting in this new row.
Proceeding in this way, we see that all boxes to the left of $\mu_3$ will be 
assigned U-hooks, and the remaining rectangular region will 
be assigned hooks just as in Figure~\ref{fig:rectunionLR}. 
\end{proof}

As we mentioned at the start of Section~\ref{sect:stanley}, there is a direct 
correspondence between the Littlewood--Richardson coefficients and Stanley structure coefficients. 
Thus, our Theorem~\ref{thm:rectunionjacklr} immediately gives the following main result of the paper.
\begin{corollary}\label{cor:strongStanley}
The strong Stanley conjecture holds in the rectangular union case.
\end{corollary}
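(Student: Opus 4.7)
The plan is to derive the corollary from Theorem~\ref{thm:rectunionjacklr} by translating its explicit formula for the Littlewood--Richardson coefficient $\jackjLR_{\mu,\bsigma}^{\mu\cup m^n}$ into the corresponding statement about the Stanley structure coefficient $\langle J_\mu J_{\bsigma}, J_{\mu\cup m^n}\rangle$. Since by definition
\[
\langle J_\mu J_{\bsigma}, J_{\mu\cup m^n}\rangle = \jackjLR_{\mu,\bsigma}^{\mu\cup m^n}\cdot \norm{J_{\mu\cup m^n}}^2,
\]
the Jack norm formula~\eqref{eq:jacknorm} lets us rewrite $\norm{J_{\mu\cup m^n}}^2 = \prod_{b \in \mu\cup m^n}\uh{\mu\cup m^n}{b}\lh{\mu\cup m^n}{b}$, and it remains to combine this with the hook fraction supplied by Figure~\ref{fig:rectunionLR}.

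The key mechanism is the cancellation in the denominator. Theorem~\ref{thm:rectunionjacklr} presents $\jackjLR_{\mu,\bsigma}^{\mu\cup m^n}$ as a fraction whose numerator is an assignment of either $\uh{}{b}$ or $\lh{}{b}$ to every box of $\mu$ and of $\bsigma$, and whose denominator is an assignment of either $\uh{\mu\cup m^n}{b}$ or $\lh{\mu\cup m^n}{b}$ to every box of $\mu\cup m^n$. When we multiply by $\norm{J_{\mu\cup m^n}}^2$, each denominator hook at $b \in \mu\cup m^n$ cancels the matching factor in $\uh{\mu\cup m^n}{b}\lh{\mu\cup m^n}{b}$, leaving precisely the \emph{opposite} hook at $b$. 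The result is therefore a product of a single chosen hook, upper or lower, over every box of each of the three partitions $\mu$, $\bsigma$, and $\mu\cup m^n$, exactly as demanded by Conjecture~\ref{conj:strongStanley}.

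It remains to check the balance condition: that upper and lower hooks each appear exactly $|\mu\cup m^n|$ times in the product. Reading off Figure~\ref{fig:rectunionLR}, the numerator assigns certain counts of $U$ and $L$ to $\mu$ and $\bsigma$, while the hooks newly introduced on $\mu\cup m^n$ (after the flipping) are the complements of those in the denominator. A direct count region-by-region in the decomposition $\mu = \mu_0\cup\mu_1\cup\mu_2\cup\mu_3$ (together with the rectangular structure of $\bsigma$ inside $m^n$) verifies the identity; this is a routine bookkeeping step analogous to the one performed in the rectangular case, where symmetry of the three quadrants guarantees equality. The main (and only real) obstacle, namely producing the explicit hook prescription in the first place, has already been overcome in Theorem~\ref{thm:rectunionjacklr}; the present corollary is a clean repackaging of that result, and the manifest positivity in $\alpha$ is immediate from the fact that every hook $\uh{}{b}$ and $\lh{}{b}$ is a polynomial in $\alpha$ with non-negative integer coefficients.
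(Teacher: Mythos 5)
Your proposal is correct and mirrors the paper's own (very brief) argument: both pass from the explicit hook‑fraction for $\jackjLR_{\mu,\bsigma}^{\mu\cup m^n}$ in Theorem~\ref{thm:rectunionjacklr} to the Stanley structure coefficient by multiplying through by $\norm{J_{\mu\cup m^n}}^2 = \prod_b \uh{\mu\cup m^n}{b}\lh{\mu\cup m^n}{b}$, so that each denominator hook cancels one of the two norm factors and the complementary hook survives — which is exactly what the paper means by ``bringing the denominator up to the numerator with its hooks flipped $L \leftrightarrow U$.'' The only place you are slightly more careful than the paper is in flagging the balance condition ($|\lambda|$ upper and $|\lambda|$ lower hooks) as needing a region‑by‑region count; the paper leaves that implicit, and your sketch of it is at the same level of rigor.
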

%\begin{proof}
%\end{proof}
We find that the Stanley structure coefficients are given by Figure~\ref{fig:rectunionLR}, 
except with bringing the denominator up to the numerator with its hooks flipped $L \leftrightarrow U$.

\begin{example}
We have
\begin{equation*}
\langle J_{42211}J_{211}, J_{43331}\rangle  
=\ytableausetup{boxsize=1.0em}
\begin{ytableau}
*(boxL) L   \\
*(boxL) L   \\
*(boxL) L & *(boxL) L \\
*(boxL) L & *(boxL) L \\
*(boxU) U & *(boxU) U & *(boxU) U & *(boxU) U \\
\end{ytableau}\quad
\begin{ytableau}
*(boxU) U  \\
*(boxU) U  \\
*(boxU) U & *(boxU) U
\end{ytableau}\quad
\begin{ytableau}
*(boxU) U \\
*(boxU) U & *(boxU) U & *(boxL) L \\
*(boxU) U & *(boxU) U & *(boxL) L \\
*(boxU) U & *(boxL) L & *(boxL) L \\
*(boxL) L & *(boxL) L & *(boxL) L & *(boxL) L \\
\end{ytableau}. \\
\end{equation*}
From the diagrams, we then get (by reading columns from left to right)
\begin{align*}
&\left([1,0][2,0][3,-1][4,-1][4,-4] \cdot
[1,0][2,0][2,-3]\cdot[0,-2]\cdot[0,-1]\right) \\
& \times \left([0,-1][1,-1][2,-2]\cdot[0,-1]\right) \\
& \times \left([0,-1][1,-3][2,-3][3,-3][5,-3] \right. \\
& \phantom{\times(}\cdot [0,-2][1,-2][3,-1][4,-2]\\
& \phantom{\times(}\cdot \left.[1,0][2,0][3,0][4,-1] \cdot [1,0] \right).
\end{align*}
Using $[x,y]=\alpha x - y$, we find that the above expression
equals
\begin{align*}
2^9\cdot 3^2\cdot\alpha^6(1+\alpha)^4(2+\alpha)(3+\alpha)^2(4+\alpha)^2
 (1+2\alpha)(1+3\alpha)(2+3\alpha)^2(5+3\alpha).
\end{align*}
\end{example}

Finally, we summarize the two possible solutions to the rectangular union Littlewood--Richardson coefficient, based on the starting choice for $g_{\sigma,\bsigma}^{m^n}$, and by noticing the cancellation of the L/U hooks between the numerator and denominator of Fig~\ref{fig:rectunionLR} in the $\mu_1$, $\mu_3$ regions and a corner rectangle.
We see that the rectangular union Littlewood--Richardson coefficient can be expressed in terms of a
rectangular Littlewood--Richardson coefficient of a sub-diagram.
\begin{corollary}
Let $\mu_2$ be as before, and note that $\mu_2 \cup \bsigma^R$ is a rectangular region $K$ of shape $k^\ell$. We then have
\begin{equation*}
g_{\mu,\bsigma}^{\mu\cup m^n} = F \times g_{\mu_2, \bar \mu_2}^{k^\ell}
\end{equation*}
where $F$ is a products of $L$-hooks for every box to the left of $K$, 
and $U$-hooks for every box below it, for both $\mu$ in the numerator 
and $\mu \cup m^n$ in the denominator. See Figure~\ref{fig:simpleformula2}.
\end{corollary}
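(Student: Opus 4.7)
The strategy is to apply the explicit product-of-hooks formula for $g_{\mu,\bsigma}^{\mu\cup m^n}$ established in Theorem~\ref{thm:rectunionjacklr} (as displayed in Figure~\ref{fig:rectunionLR}), and to identify pairwise cancellations between the numerator and denominator in three disjoint sub-regions: all of $\mu_1$, all of $\mu_3$, and a corner rectangle inside $\mu_0$. What survives will be the residual boxes of $\mu_0$ (making up the factor $F$) together with a ratio of hook products supported only on $\mu_2$, $\bsigma$, and the sub-rectangle $K$ of shape $k^\ell$. This surviving ratio will then be matched with $g_{\mu_2,\bar{\mu_2}}^{k^\ell}$ via the rectangular formula~\eqref{eqn:rectangularLR}.

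The key geometric observation is that for any $b \in \mu_1$ (strictly above $m^n$), the arm and leg of $b$ are unchanged when we pass from $\mu$ to $\mu\cup m^n$, since the extra boxes of $m^n$ lie neither to the right of nor above $b$. Hence $\uh{\mu}{b}=\uh{\mu\cup m^n}{b}$ and $\lh{\mu}{b}=\lh{\mu\cup m^n}{b}$. A direct inspection of Figure~\ref{fig:rectunionLR} shows that every $b \in \mu_1$ receives the same hook type in the $\mu$-numerator and in the $\mu\cup m^n$-denominator, so these factors cancel in pairs. The identical argument applies to $\mu_3$. It also applies to the corner sub-rectangle of $\mu_0$ whose columns sit beneath $\mu_1$ and whose rows sit to the left of $\mu_3$: for any such $b$, its row already extends as far right as the rightmost column of $\mu_3$ and its column already extends as far up as the topmost row of $\mu_1$, so filling in $m^n$ does not change its arm or leg, and the corresponding hooks cancel between numerator and denominator.

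The remaining boxes of $\mu_0$ lie either in a column strictly to the left of $K$ or in a row strictly below $K$; their hooks in $\mu$ and in $\mu\cup m^n$ genuinely differ, and they do not cancel. Collecting the unpaired factors $\lh{\mu}{b}/\lh{\mu\cup m^n}{b}$ for $b$ left of $K$ and $\uh{\mu}{b}/\uh{\mu\cup m^n}{b}$ for $b$ below $K$ yields exactly the claimed $F$. The leftover ratio involves only the boxes of $\mu_2$ and $\bsigma$ in the numerator and the sub-rectangle $K=\mu_2\cup\bsigma^R$ of shape $k^\ell$ in the denominator; by reading off the hook-type prescription left on each box of $K$ and comparing with the definition of $D'_{\mu_2,k^\ell}$, one sees that the leftover ratio is precisely the right-hand side of~\eqref{eqn:rectangularLR} applied to $\mu_2 \subseteq k^\ell$. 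Hence it equals $g_{\mu_2,\bar{\mu_2}}^{k^\ell}$, completing the identity. The main obstacle, and the step that requires the most careful bookkeeping, is verifying region-by-region that the hook-type prescription of Figure~\ref{fig:rectunionLR} for the $\mu$-numerator really does agree with that of the $\mu\cup m^n$-denominator on $\mu_1$, $\mu_3$, and the corner rectangle, so that the three cancellations are genuine equalities of the individual hook values, and that the residual prescription on $K$ is exactly $D'_{\mu_2,k^\ell}$ rather than some symmetric variant.
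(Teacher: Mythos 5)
Your proposal is correct and follows essentially the same route as the paper, which presents this corollary with only a one-sentence justification ("by noticing the cancellation of the L/U hooks between the numerator and denominator of Fig~\ref{fig:rectunionLR} in the $\mu_1$, $\mu_3$ regions and a corner rectangle"). You supply the geometric reason behind those cancellations — that for boxes in $\mu_1$, $\mu_3$, and the corner rectangle the arm and leg are preserved when passing from $\mu$ to $\mu\cup m^n$, so the hook values coincide box-by-box once the hook types are seen to agree in Figure~\ref{fig:rectunionLR} — and you correctly identify the surviving uncancelled factors of $\mu_0$ as the claimed $F$, with the residual prescription on $K$ matching $D'_{\mu_2,k^\ell}$ from~\eqref{eqn:rectangularLR}.
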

\begin{figure}[!htb]
\centering
\includegraphics[scale=0.5,page=12]{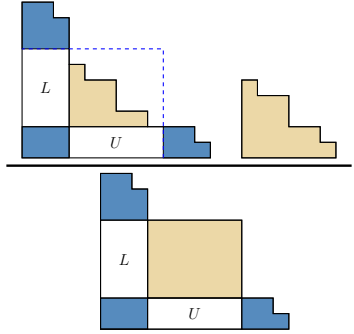}
\caption{
We can re-write the result for rectangular union in schematic form. 
The blue sections are not assigned any hooks, and for the beige regions 
we can choose one of the two solutions to the rectangular case for that subregion.}
\label{fig:simpleformula2}
\end{figure}

\section{Conjectures involving shifted Jack functions}

Is there an analog of \eqref{eq:sumproductident} for shifted Jack structure constants?
In \cite{AlexanderssonFeray:2019}, it is conjectured that for shifted Jack functions, we have that 
$\langle \jacks{\mu} \jacks{\nu} , \jacks{\lambda} \rangle$ are Laurent polynomials in $\alpha$ with non-negative
integer coefficients.
One can then ask if a shifted analog of Stanley's conjecture holds. 
Let us first set
\[
 \defin{H^U_\lambda} \coloneqq  \prod_{b \in \lambda} h^U_\lambda(b), \quad
 \defin{H^L_\lambda} \coloneqq  \prod_{b \in \lambda} h^L_\lambda(b).
\]
The definition of the Jack Littlewood--Richardson coefficients
$\defin{ \jackjLR_{\mu\nu}^{\lambda}(\alpha)}$ are then defined via
\begin{equation*}
 \jacks{\mu} \jacks{\nu} = \sum_{\lambda}  \jackjLR_{\mu\nu}^{\lambda}(\alpha) \jacks{\lambda} \text{ or equivalently }
  \jacksp{\mu} \jacksp{\nu} = \sum_{\lambda}  \jackjLR_{\mu\nu}^{\lambda}(\alpha)
  \frac{H^L_\lambda}{H^L_\mu \cdot H^L_\mu}
  \jacksp{\lambda},
\end{equation*}
where $\jacks{\mu} = H^L_\mu \jacksp{\mu}$.
When $|\mu|+|\nu|=|\lambda|$, we recover the ``non-shifted'' coefficients in \eqref{eq:jackLR},
but we may have non-zero coefficients also when $|\mu|+|\nu| > |\lambda|$.
The quantity $c_{\mu\nu}^\lambda(\alpha) \coloneqq \jackjLR_{\mu\nu}^{\lambda}(\alpha) \frac{H^L_\lambda}{H^L_\mu \cdot H^L_\mu}$
is then a deformation of the classical Littlewood--Richardson coefficients for Schur functions,
and their shifted generalization.
The inner product for the shifted Jack polynomials are then given by
\[
 \langle \jacks{\mu} \jacks{\nu} , \jacks{\lambda} \rangle =
  H^L_\lambda \cdot H^U_\lambda \cdot \jackjLR_{\mu\nu}^{\lambda}(\alpha),
\]
as in Subsection~\ref{sect:stanley}.

\begin{conjecture}[Shifted strong Stanley conjecture]
If the shifted Littlewood--Richardson coefficient $c_{\mu\nu}^\lambda(1)$ is equal to $1$,
then the (shifted) structure coefficient has the following form:
\begin{equation*}
\langle \jacks{\mu} \jacks{\nu} , \jacks{\lambda} \rangle = 
\left( \prod_{b \in \mu} \tilde{h}_{\mu}(b) \right)
\left( \prod_{b \in \nu} \tilde{h}_{\nu}(b) \right)
\left( \prod_{b \in \lambda} \tilde{h}_{\lambda}(b) \right),
\end{equation*}
where $\tilde{h}_{\sigma}(b)$ is a choice of either $\uh{\sigma}{b}$ or $\lh{\sigma}{b}$ for each box $b$.
Moreover, one chooses $\uh{\sigma}{b}$ exactly $|\lambda|$
times each in the above expression
and $\lh{\sigma}{b}$ is chosen $|\mu|+|\nu|$ times.
\end{conjecture}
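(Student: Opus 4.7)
The plan is to seek a shifted analog of the sum-product identity \eqref{eq:sumproductident} that governs the shifted Jack Littlewood--Richardson coefficients $\jackjLR_{\mu\nu}^{\lambda}(\alpha)$. First I would try to produce a rational function $\widetilde{T}_{\mu\*\nu}(u)$, built from the combinatorics of shifted Jack Pieri operators, whose partial fraction decomposition encodes all $\jackjLR_{\mu\nu}^{\lambda}(\alpha)$ simultaneously. Since shifted Jack polynomials allow $|\mu|+|\nu| \geq |\lambda|$, I expect $\widetilde{T}_{\mu\*\nu}(u)$ to factor as $T_{\mu\*\nu}(u)$ times a correction term encoding the ``lower-degree'' contributions where $|\lambda| < |\mu|+|\nu|$. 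A natural candidate is to build this correction from the inhomogeneous vanishing conditions defining $\jacks{\lambda}$.

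Once such an identity is in hand, the structural workflow of the present paper carries over. I would first reprove the case $|\mu|+|\nu|=|\lambda|$ via residue extraction, since in this top-degree case the shifted coefficients coincide with the ordinary $\jackjLR_{\mu\nu}^\lambda(\alpha)$ and the conjecture reduces to the strong Stanley conjecture already treated. For the genuinely shifted cases $|\mu|+|\nu| > |\lambda|$, the residues should localize at additional box-coordinates $[s]$ with $s \notin \lambda$, and the extraction of a single coefficient would again require a uniqueness argument in the spirit of the fact that $m^n$ is the unique partition of its size containing the box $\bms{v}$. The candidate to verify first is the shifted rectangular case, i.e.\ $\langle \jacks{\mu}\jacks{\bmu},\jacks{m^n}\rangle$, followed by the shifted analog of the rectangular union case, using the decomposition $\mu = \mu_0\cup\mu_1\cup\mu_2\cup\mu_3$ as in Figure~\ref{fig:mudecomp}.

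The key technical ingredients to adapt are the expansion Lemma~\ref{lemma:expansion} and the flip rules of Lemma~\ref{lemma:flip}. In the shifted setting I expect an analogous ``expansion around an inner corner'' to hold, but with the four quadrant products weighted by additional hook factors $h^U$ and $h^L$ reflecting the $H^U_\lambda \cdot H^L_\lambda$ normalization appearing in $\langle \jacks{\mu}\jacks{\nu},\jacks{\lambda}\rangle$. The hook-balance prescription in the conjecture ($|\lambda|$ copies of $\uh{}{\cdot}$ and $|\mu|+|\nu|$ copies of $\lh{}{\cdot}$) is precisely what one would obtain by combining the $|\lambda|$-many $\uh{\lambda}{b}$ factors from $H^U_\lambda$ with an ordinary strong Stanley decomposition whose hooks are rebalanced; this suggests that the correct proof should treat the $H^U_\lambda$ factor as ``frozen'' and apply the existing strong Stanley technology to the residual factor.

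The principal obstacle is the first step: establishing the shifted sum-product identity. The ordinary identity \eqref{eq:sumproductident} relies crucially on homogeneity, so one cannot simply shift variables. A plausible route is to use the interpretation of shifted Jack polynomials as eigenfunctions of the Sekiguchi--Debiard--Okounkov operator, combined with the vertex-operator approach of \cite{CaiJing:2013}, to derive a suitable identity. A secondary obstacle is verifying, once the shifted residue formula is available, that the condition $c_{\mu\nu}^\lambda(1) = 1$ (rather than $c_{\mu\nu}^\lambda = 1$) is the precise hypothesis needed to guarantee a unique dominant term, since the specialization $\alpha = 1$ encodes the combinatorial positivity required for the hook-product form to be well-defined. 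I would expect counterexamples to a naive statement with $c_{\mu\nu}^\lambda(\alpha) = 1$ as a polynomial identity, so pinning down the right specialization condition is itself part of the problem.
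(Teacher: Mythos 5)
The statement you are addressing is a \emph{conjecture} in the paper: the authors offer no proof, only the motivating question of whether an analogue of \eqref{eq:sumproductident} exists for shifted Jack structure constants, together with a single worked example. Your proposal is therefore not being measured against an existing argument---but it also does not constitute a proof. Every substantive step is conditional on the existence of a shifted sum-product identity for a function $\widetilde{T}_{\mu\*\nu}(u)$, and you correctly identify constructing it as the principal obstacle; you do not construct it, nor give evidence beyond plausibility that it exists. Since the entire residue-extraction machinery of the paper (Corollary~\ref{corr:rectangularcase}, the expansion Lemma~\ref{lemma:expansion}, the flip rules of Lemma~\ref{lemma:flip}) sits downstream of \eqref{eq:sumproductident}, nothing in your program can be executed until that first ingredient is supplied. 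The gap you leave open is precisely the gap the authors leave open.

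A second, independent gap: even granting a shifted sum-product identity, the paper's technique only extracts a coefficient $\jackjLR_{\mu\nu}^{\gamma}$ when the target $\gamma$ is \emph{uniquely determined} by containing a distinguished box (the rectangle $m^n$ containing $\bms{v}$, or $\mu\cup m^n$ in the union case). The conjecture ranges over all triples with $c_{\mu\nu}^\lambda(1)=1$, so your plan would at best yield shifted analogues of the rectangular and rectangular-union cases, not the full statement. Your heuristic for the hook balance ($|\lambda|$ upper hooks from a ``frozen'' $H^U_\lambda$ and $|\mu|+|\nu|$ lower hooks) is consistent with the normalization $\langle \jacks{\mu}\jacks{\nu},\jacks{\lambda}\rangle = H^L_\lambda \cdot H^U_\lambda\cdot \jackjLR_{\mu\nu}^{\lambda}(\alpha)$ and with the paper's example, but it is a counting observation, not an argument. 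As it stands the proposal is a reasonable research program whose first and hardest step is unexecuted.
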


\begin{example}
For $\lambda = 32211$, $\mu = 31$ and $\nu=321$, we have that 
\[
 c_{\mu\nu}^{\lambda}(\alpha) =
 \frac{\alpha ^2 (\alpha +3)^2 (\alpha +4) (2 \alpha +1)^2 (2 \alpha +5) (3 \alpha +1) (3 \alpha +2)}{(\alpha +1)^6 (\alpha +2)^2 (2 \alpha +3)^2 (3 \alpha +4)}
 \]
(which evaluates to $1$ at $\alpha=1$) and 
\[
 \langle \jacks{\mu} \jacks{\nu} , \jacks{\lambda} \rangle 
 =
 8 \alpha ^5 (\alpha +3)^2 (\alpha +4) (2 \alpha +1)^2 (2 \alpha +5) (3
   \alpha +1) (3 \alpha +2).
\]
We can see that this product is obtained from the assignments below:
\begin{equation*}
\langle  \jacks{321} \jacks{31}, \jacks{32211}\rangle
=\ytableausetup{boxsize=1.0em}
\begin{ytableau}
*(boxU) U  \\
*(boxU) U & *(boxU) U  \\
*(boxU) U & *(boxU) U &  *(boxL) L
\end{ytableau}\,\,
\begin{ytableau}
*(boxU) U \\
*(boxU) U & *(boxU) U & *(boxU) U \\
\end{ytableau}
\,\,
\begin{ytableau}
*(boxL) L   \\
*(boxL) L   \\
*(boxL) L & *(boxL) L \\
*(boxL) L & *(boxL) L \\
*(boxL) L & *(boxL) L & *(boxL) L  \\
\end{ytableau}.
\end{equation*}
Note that here the choices for all boxes that aren't inner corners are forced.
\end{example}

\section*{Acknowledgements}

The authors would like to thank the organizers of the
conference \emph{Open Problems in Algebraic Combinatorics 2022} (OPAC),
the occasion of which led to this collaboration. RM would like thank Alexander Moll for first suggesting that there might be a path to make progress on the Stanley conjectures starting from a deeper understanding of the Nazarov-Skylanin Lax operator, and this paper represents the culmination of that idea.

\bibliographystyle{amsalpha}
%\bibliography{/Users/ryanmickler/Dropbox/Kennebunk/Archive/MasterArchive}
\bibliography{./rectangularBibliography}
\end{document}